\newtheorem{theorem}{Theorem}
\newtheorem{lemma}{Lemma}
\theoremstyle{definition}
\newtheorem{definition}{Definition}
\theoremstyle{remark}
\newtheorem{remark}{Remark}
\newtheorem{convention}{Convention}
\providecommand{\keywords}[1]
{
 \small	
 \textbf{\textit{Keywords: }} #1
}
\newcommand{\BD}{\mathsf{BD}}
\newcommand{\four}{\mathbf{4}}
\newcommand{\true}{\mathbf{T}}
\newcommand{\both}{\mathbf{B}}
\newcommand{\neither}{\mathbf{N}}
\newcommand{\false}{\mathbf{F}}
\newcommand{\weakrightarrow}{\rightarrowtriangle}
\newcommand{\weakcoimplies}{\multimap}
\newcommand{\coimplies}{\Yleft}
\newcommand{\Gsquare}{\mathsf{G}^2}
\newcommand{\Gsquareorder}{\mathsf{G}^2_{(\rightarrow,\coimplies)}}
\newcommand{\GsquareNelson}{\mathsf{G}^2_{(\weakrightarrow,\weakcoimplies)}}
\newcommand{\LGsquareorder}{\mathscr{L}_{\mathsf{G}^2_{(\rightarrow,\Yleft)}}}
\newcommand{\LbiG}{\mathscr{L}_\biG}
\newcommand{\LinvG}{\mathscr{L}_{\invG}}
\newcommand{\biG}{\mathsf{biG}}
\newcommand{\triangletop}{\triangle^\top}
\newcommand{\invol}{{\sim_\mathsf{i}}}
\newcommand{\invG}{\mathsf{G}_{\mathsf{inv}}}
\newcommand{\Prop}{\mathsf{Prop}}
\newcommand{\nfour}{\mathsf{N4}}
\newcommand{\Cmsf}{{\mathsf{C}}}
\newcommand{\Dmsf}{{\mathsf{D}}}
\newcommand{\Gmsf}{{\mathsf{G}}}
\newcommand{\Imsf}{{\mathsf{I}}}
\newcommand{\Lmsf}{{\mathsf{L}}}
\newcommand{\Mmsf}{{\mathsf{M}}}
\newcommand{\fmsf}{\mathsf{f}}
\newcommand{\Dmc}{\mathcal{D}}
\newcommand{\Emc}{\mathcal{E}}
\newcommand{\Lmc}{{\mathcal{L}}}
\newcommand{\Xmc}{{\mathcal{X}}}
\newcommand{\Ymc}{{\mathcal{Y}}}
\begin{document}
\allowdisplaybreaks
\title{Filter-induced entailment relations in paraconsistent Gödel logics\thanks{The authors were supported by the project PRELAP (ANR-19-CE48-0006). This research is part of the MOSAIC project financed by the European Union's Marie Sk\l{}odowska-Curie grant \textnumero101007627. The second author was additionally partially supported by the project INTENDED (ANR-19-CHIA-0014). The authors wish to thank the reviewers for their comments.}}
\author[1]{Sabine Frittella}
\author[2]{Daniil Kozhemiachenko}
\affil[1]{INSA Centre Val de Loire, Univ.\ Orl\'{e}ans, LIFO UR 4022, France\\\href{mailto:sabine.frittella@insa-cvl.fr}{sabine.frittella@insa-cvl.fr}}
\affil[2]{Aix Marseille Univ, CNRS, LIS, Marseille, France\\\href{mailto:daniil.kozhemiachenko@lis-lab.fr}{daniil.kozhemiachenko@lis-lab.fr} (corresponding author)}
\maketitle
\begin{abstract}
We consider two expansions of G\"{o}del logic $\Gmsf$ with two versions of paraconsistent negation. The first one is $\invG$ --- the expansion of $\Gmsf$ with an \emph{involuitive} negation $\invol$ defined via $v(\invol\phi)=1-v(\phi)$. The second one is $\Gsquareorder$ --- an expansion with a~so-called ‘strong negation’ $\neg$. This logic utilises \emph{two independent} valuations on $[0,1]$~--- $v_1$ (support of truth or positive support) and $v_2$ (support of falsity or negative support) that are connected with $\neg$. Two valuations in $\Gsquareorder$ can be combined into one valuation $v$ on $[0,1]^{\Join}$~--- the twisted product of $[0,1]$ with itself~--- with two components $v_1$ and $v_2$. The two logics are closely connected as $\invol$ and $\neg$ allow for similar definitions of \emph{co-implication}~--- $\phi\coimplies\chi\coloneqq\invol(\invol\chi\rightarrow\invol\phi)$ and $\phi\coimplies\chi\coloneqq\neg(\neg\chi\rightarrow\neg\phi)$~--- but do not coincide since the set of values of $\Gsquareorder$ is not ordered linearly.

Our main goal is to study different entailment relations in $\invG$ and $\Gsquareorder$ that are induced by filters on $[0,1]$ and $[0,1]^{\Join}$, respectively. In particular, we determine the exact number of such relations in both cases, establish whether any of them coincide with the entailment defined via the order on $[0,1]$ and $[0,1]^{\Join}$, and obtain their hierarchy. We also construct reductions of filter-induced entailment relations to the ones defined via the order.

\keywords{G\"{o}del logic; paraconsistent logics; entailment relations; filters on lattices.}
\end{abstract}
\section{Introduction\label{sec:introduction}}
G\"{o}del logic (or G\"{o}del--Dummett logic) $\Gmsf$ is a~superintuitionistic (intermediate) logic that can be obtained by adding $(p\rightarrow q)\vee(q\rightarrow p)$ as an axiom to the Intuitionistic propositional calculus~\cite{Dummett1959}. This axiom is called ‘linearity’ (or ‘prelinearity’) since the resulting calculus turns out to be complete w.r.t.\ \emph{linear} Heyting algebras and \emph{linear} Intuitionistic Kripke frames. Moreover, a~Heyting algebra over $[0,1]$ is the canonical model of $\Gmsf$ (cf., e.g.,~\cite[Theorem~4.2.17]{Hajek1998}). This means that G\"{o}del logic can be thought of as a~\emph{fuzzy logic} with the following t-norm $\wedge_\Gmsf$ and residuum $\rightarrow_\Gmsf$.
\begin{align*}
a\wedge_\Gmsf b&\coloneqq\min(a,b)&a\rightarrow_\Gmsf b&\coloneqq\begin{cases}1&\text{if }a\leq b\\b&\text{otherwise}\end{cases}
\end{align*}
\paragraph{Paraconsistent G\"{o}del logics}
\emph{Propositional} expansions of G\"{o}del logics have been long studied. For example, in~\cite{Baaz1996}, a~$1$-detecting connective $\triangle$ (‘Baaz Delta’ or ‘Baaz--Monteiro Delta’) is proposed and axiomatised. $\triangle$ turns out to be equivalent to $\coimplies$ (coimplication\footnote{The connective was first introduced in the context of Intuitionistic logic by Rauszer~\cite{Rauszer1974}. The $\coimplies$ sign is due to~\cite{Gore2000}. As expected, $\coimplies$ residuates $\vee$ in $\biG$:
\begin{align*}
a\vee_\Gmsf b&\coloneqq\max(a,b)&a\coimplies_\Gmsf b&\coloneqq\begin{cases}a&\text{if }a>b\\0&\text{otherwise}\end{cases}
\end{align*}} --- the dual connective to $\rightarrow$) modulo $\Gmsf$ as they can be used to define one another as follows: $\triangle\phi\coloneqq\mathbf{1}\coimplies(\mathbf{1}\coimplies\phi)$ and $\phi\coimplies\chi\coloneqq\phi\wedge{\sim}\triangle(\phi\rightarrow\chi)$ (with ${\sim}\psi\coloneqq\psi\rightarrow\mathbf{0}$). One of the prominent directions of research is the investigation of \emph{paraconsistent} expansions of G\"{o}del logic, that is, such expansions where there are two formulas $\phi$ and $\chi$ and a~negation $\neg$ s.t.\ $\phi,\neg\phi\not\models\chi$~\cite{PriestTanakaZach2022}.

Note that the standard G\"{o}del negation $\sim$ is \emph{not} paraconsistent as $p\wedge{\sim}p$ is always evaluated at $0$. Thus, a~paraconsistent expansion of $\Gmsf$ has to include a~new negation. To the best of our knowledge, the first paraconsistent G\"{o}del logic was proposed in~\cite{EstevaGodoHajekNavara2000}. There, the idea was to add $\invol$ --- the involutive negation defined by $v(\invol\phi)=1-v(\phi)$. As one can see, $\invol$ is indeed paraconsistent \emph{if the entailment is defined via the preservation of order on $[0,1]$}: it is possible that $v(p\wedge\invol p)>v(q)$. Moreover, one can see that $\invol$ allows us to define $\coimplies$ (and thus, $\triangle$) as follows: $\phi\coimplies\chi\coloneqq\invol(\invol\chi\rightarrow\invol\phi)$. The logic (which we will designate $\invG$) was further investigated in~\cite{ErtolaEstevaFlaminioGodoNoguera2015,ConiglioEstevaGispertGodo2021}.

Another family of paraconsistent G\"{o}del logics that is connected to the G\"{o}del logic with involutive negation was proposed in~\cite{Ferguson2014} and then, independently, in~\cite{BilkovaFrittellaKozhemiachenko2021TABLEAUX} and further studied in~\cite{BilkovaFrittellaKozhemiachenkoMajer2023IJAR}. These logics (collectively denoted~$\Gsquare$) were inspired by paraconsistent expansions of the bi-Intuitionistic logic (originally called Heyting--Brouwer in~\cite{Rauszer1974}; the name ‘bi-Intuitionistic’ is by~\cite{Gore2000,BuismanGore2007}) $\Imsf_i\Cmsf_j$'s presented in~\cite{Wansing2008}. There the idea is to introduce \emph{two independent} valuations on $[0,1]$ --- $v_1$ and $v_2$ that stand for, respectively, \emph{support of truth} (or \emph{positive support}) and \emph{support of falsity} (\emph{negative support}). These valuations are then connected by an additional De Morgan negation $\neg$ (usually called ‘strong negation’) that swaps supports of truth and falsity. The main focus of~\cite{BilkovaFrittellaKozhemiachenko2021TABLEAUX,BilkovaFrittellaKozhemiachenkoMajer2023IJAR} was on logics $\Gsquareorder$ (the linear extension of $\Imsf_4\Cmsf_4$\footnote{The logic was introduced independently by different authors~\cite{Wansing2008,Leitgeb2019}, and further studied in~\cite{OdintsovWansing2021}. It is the propositional fragment of Moisil's modal logic~\cite{Moisil1942}. We are grateful to Heinrich Wansing who pointed this out to us.}) and $\GsquareNelson$ (the linear extension of $\Imsf_1\Cmsf_1$ --- an expansion of Nelson's logic from~\cite{Nelson1949} with a~co-implication) where the support of falsity of (co-)implication was defined as follows.
\begin{align}
\neg(\phi\rightarrow\chi)&\coloneqq\neg\chi\coimplies\neg\phi&\neg(\phi\coimplies\chi)&\coloneqq\neg\chi\rightarrow\neg\phi\tag{in $\Gsquareorder$}\nonumber\\
\neg(\phi\weakrightarrow\chi)&\coloneqq\phi\wedge\neg\chi&\neg(\phi\!\weakcoimplies\!\chi)&\coloneqq\neg\phi\vee\chi\tag{in $\GsquareNelson$}\nonumber\\
&\label{equ:Gsquaredualisations}
\end{align}
In particular, complete tableaux and Hilbert-style calculi were constructed and a~correspondence between relational and algebraic semantics of $\Gsquare$ was established.

The strong negation in $\Imsf_i\Cmsf_j$ (and hence, $\Gsquare$) is an adaptation of the negation from the Belnap--Dunn four-valued logic $\BD$~\cite{Dunn1976,Belnap1977fourvalued,Belnap1977computer}. The main idea behind $\BD$ is to treat truth values as possible types of information one can receive about a~given statement $\phi$. Namely, one can be told \emph{only that $\phi$ is true} ($\true$), \emph{only that $\phi$ is false}~($\false$), be told that \emph{$\phi$ is true and that it is false} ($\both$), or \emph{neither} be told that \emph{$\phi$ is true nor that it is false}~($\neither$). The values $\true$, $\both$, $\neither$, and $\false$ form a~\emph{bi-lattice} $\four$ with two orders: the truth order and the information order (cf.~Fig.~\ref{fig:01join}). Such interpretation of two orders in bi-lattices was later applied for reasoning about uncertainty, e.g., in~\cite{Ginsberg1988} and then in~\cite{Rivieccio2010PhD} and~\cite{JansanaRivieccio2012}. From this point of view, two $\Gsquare$-valuations on $[0,1]$ can be combined into one valuation on the bi-lattice $[0,1]^{\Join}$ (Fig.~\ref{fig:01join}) with two coordinates corresponding to $v_1$ and $v_2$. Thus, $[0,1]^{\Join}$ can be construed as a~continuous version of $\four$ and $\Gsquare$'s can be considered hybrids between $\BD$ and~$\Gmsf$.
\paragraph{Entailment relations in G\"{o}del logic}
A peculiar property of $\Gmsf$ inherited from the Intuitionistic logic is that \emph{global and local entailments in $\Gmsf$ coincide}. If $\mathfrak{M}=\langle W,R,v\rangle$ is a~G\"{o}del model, the following two entailment relations are equivalent (below, $\mathfrak{M},w\Vdash\phi$ means ‘$\phi$ is true at $w$ in $\mathfrak{M}$’ and $\mathfrak{M}\Vdash\phi$ stands for ‘$\phi$~is true in every $w\in W$ of $\mathfrak{M}$’):
\begin{enumerate}
\item $\Gamma\models^l_\Gmsf\chi$ iff $\mathfrak{M},w\Vdash\chi$ for every $\Gmsf$-model $\mathfrak{M}$ and $w\in\mathfrak{M}$ s.t.\ $\mathfrak{M},w\Vdash\phi$ for each $\phi\in\Gamma$ (local entailment);
\item $\Gamma\models^g_\Gmsf\chi$ iff $\mathfrak{M}\Vdash\chi$ for every $\Gmsf$-model $\mathfrak{M}$ s.t.\ $\mathfrak{M}\Vdash\phi$ for each $\phi\in\Gamma$ (global entailment).
\end{enumerate}

Algebraically, this means (cf., e.g.,~\cite[Proposition~2.1]{RodriguezVidal2021}) that one can \emph{equivalently} define entailment on $[0,1]$ in either of the following two ways:
\begin{enumerate}
\item $\Gamma\models^\leq_\Gmsf\chi$ iff $\inf\{v(\phi)\mid\phi\in\Gamma\}\leq v(\chi)$ for every $\Gmsf$-valuation $v$ (order-entailment or entailment via the \emph{preservation of the truth degree});
\item $\Gamma\models^1_\Gmsf\chi$ iff $v(\chi)=1$ for every $\Gmsf$-valuation $v$ s.t.\ $v(\phi)=1$ for all $\phi\in\Gamma$ (1-entailment or entailment via the \emph{(absolute) truth preservation}).
\end{enumerate}

The situation changes when one considers $\biG$ (bi-G\"{o}del or symmetric G\"{o}del logic in the terminology of~\cite{GrigoliaKiseliovaOdisharia2016})~--- the expansion of $\Gmsf$ with co-implication or Baaz' Delta. The logic can be axiomatised by adding \emph{two} linearity axioms --- $(p\rightarrow q)\vee(q\rightarrow p)$ and $\mathbf{1}\coimplies((p\coimplies q)\wedge(q\coimplies p))$ --- to bi-Intuitionistic logic (cf.~\cite{GrigoliaKiseliovaOdisharia2016}). This happens because $\biG$ has formulas that \emph{never have value $1$} but \emph{can have a~positive value}, e.g., $p\wedge(\mathbf{1}\coimplies p)$. Thus, order-entailment and $1$-entailment in $\biG$ are distinct. Moreover, order-entailment coincides with the local entailment on linear bi-Intuitionistic frames and 1-entailment with the global one. In fact, one can see (cf.~Theorem~\ref{theorem:orderisfilter01} for the proof) that order-entailment in $\biG$ can be equivalently defined via the preservation of \emph{any} set of values $\{x\mid x\geq c>0\}$, i.e., any filter on $[0,1]$ induced by a~fixed $c\in(0,1)$.

\paragraph{Entailment relations on algebras}
There has also been a~substantial amount of research on logics and entailment relation arising from \emph{different} sets of designated values on \emph{the same} algebra, usually, lattice (i.e., different \emph{matrices} with the same algebraic carrier). In this framework, $\phi$ \emph{entails} $\chi$ w.r.t.\ the set of designated values $\Dmc$ via \emph{the preservation of designated values}. I.e., the value of $\chi$ is designated (belongs to~$\Dmc$) when the value of $\phi$ is designated. Such sets of designated values are often \emph{filters} on the lattice. This is because lattice meet is usually associated with the conjunction of the logic and the top element is often interpreted as ‘true’ (and lattice filters are closed under meets and contain the top element). The correspondence between filters and entailment relations in many-valued (and, in particular, fuzzy) logics has been extensively studied in~\cite[Chapters~2--3]{CintulaNoguera2021}. Moreover, in~\cite[\S6.3]{ConiglioEstevaGispertGodo2021}, it was shown that there are at most $11$ matrix entailment relations induced by filters on $[0,1]$. To the best of our knowledge, however, it is still open whether this bound is exact. Furthermore, recently~\cite{GispertEstevaGodoConiglio2025}, filter-induced entailment relations in the Nilpotent Minimum fuzzy logics were studied and characterised.

In addition, the relations between filters on lattices and entailment relations have been studied in connection to the Belnap--Dunn logic and its relatives. In particular, if one considers the three-element De Morgan algebra $\mathbf{3}$, there are two filters: the one containing only the top element and the one containing the top and the middle element. The first filter corresponds to the entailment of Kleene's (strong) three-valued logic from~\cite{Kleene1938} ($\mathbf{K3}$). The second filter gives rise to the Logic of Paradox by Priest~\cite{Priest1979} ($\mathbf{LP}$). It can be also shown~\cite[Propositions~5 and~6]{Dunn2000} that defining entailment via the order on $\mathbf{3}$, does not coincide with either $\mathbf{K3}$ or $\mathbf{LP}$ and gives rise to the first-degree fragment\footnote{First-degree fragment of a~logic consists of valid formulas $\phi\rightarrow\chi$ with $\phi$ and $\chi$ containing only negation, conjunction, and disjunction.} of the relevant logic $\mathbf{RM}$.

In $\four$, there are three filters: $\{\true,\both\}$, $\{\true,\neither\}$, $\{\true\}$. The first one is the set of designated values of the Belnap--Dunn logic. It can be shown (cf.,~e.g.,~\cite{Dunn2000}) that $\{\true,\neither\}$ also produces Belnap--Dunn logic. Moreover, Font~\cite{Font1997} proved that the entailment relation defined via the preservation of $\{\true,\both\}$ coincides with the entailment defined via the truth order on $\four$. On the other hand, if only $\true$ is designated, the entailment will be that of the Exactly True Logic from~\cite{PietzRiveccio2013}. Recently~\cite{Prenosil2023}, these results were generalised to all De Morgan algebras. In particular, it was shown how to produce finite axiomatisations of logics of $n$-filters on De Morgan algebras that generalise $\BD$, $\mathbf{K3}$, $\mathbf{LP}$, and classical logic.
\begin{figure}
\centering
\begin{tikzpicture}[>=stealth,relative]
\node (U1) at (0,-1.5) {$\false$};
\node (U2) at (-1.5,0) {$\neither$};
\node (U3) at (1.5,0) {$\both$};
\node (U4) at (0,1.5) {$\true$};
\node (tmin) at (-2,-1.5) {};
\node (tmax) at (-2,1.5) {};
\node (imin) at (-1.5,-2) {};
\node (imax) at (1.5,-2) {};
\path[-,draw] (U1) to (U2);
\path[-,draw] (U1) to (U3);
\path[-,draw] (U2) to (U4);
\path[-,draw] (U3) to (U4);
\draw[->,draw] (tmin) edge node[above,rotate=90] {truth} (tmax);
\draw[->,draw] (imin) edge node[below] {information} (imax);
\end{tikzpicture}
\hfil
\begin{tikzpicture}[>=stealth,relative]
\node (U1) at (0,-1.5) {$\langle0,1\rangle$};
\node (U2) at (-1.5,0) {$\langle0,0\rangle$};
\node (U3) at (1.5,0) {$\langle1,1\rangle$};
\node (U4) at (0,1.5) {$\langle1,0\rangle$};
\node (U5) at (0.2,0.6) {$\bullet$};
\node (U6) at (0.5,0.35) {$\langle x,\!y\rangle$};
\node (tmin) at (-2,-1.5) {};
\node (tmax) at (-2,1.5) {};
\node (imin) at (-1.5,-2) {};
\node (imax) at (1.5,-2) {};
\path[-,draw] (U1) to (U2);
\path[-,draw] (U1) to (U3);
\path[-,draw] (U2) to (U4);
\path[-,draw] (U3) to (U4);
\draw[dotted] (U1) -- (U4);
\draw[dotted] (U2) -- (U3);
\draw[->,draw] (tmin) edge node[above,rotate=90] {truth} (tmax);
\draw[->,draw] (imin) edge node[below] {information} (imax);
\end{tikzpicture}
\caption{Bi-lattices $\four$ (left) and $[0,1]^{\Join}$ (right) with two orders.}
\label{fig:01join}
\end{figure}

In the context of paraconsistent G\"{o}del logics, we can now pose the following natural questions.
\begin{enumerate}
\item[$\mathbf{I}$.] Which entailment relations in $\invG$ are induced by filters on $[0,1]$?
\item[$\mathbf{II}$.] Which entailment relations in $\Gsquare$ are induced by different filters on $[0,1]^{\Join}$?
\end{enumerate}

These questions arise from~\cite{BilkovaFrittellaKozhemiachenkoMajer2023IJAR} where $\Gsquareorder$ and $\GsquareNelson$ equipped with the entailment defined via the truth order on $[0,1]^{\Join}$ were used to formalise qualitative reasoning about uncertainty measures in a~paraconsistent setting. However, in the context of reasoning about uncertainty, it makes sense to assume that an agent accepts a~statement $\phi$ if their degree of certainty in $\phi$ is above a~given threshold. Formally, this can be represented as taking the values above this threshold as designated. In other words, different filter-induced entailment relations correspond to different thresholds of acceptable degrees of certainty. In the present paper, we lay the theoretical ground on filter-induced entailment relations in paraconsistent Gödel logics in order to be able to formalise a~wider range of situations.
\paragraph{This paper}
In this paper, we address questions~$\mathbf{I}$ and $\mathbf{II}$ bringing together the study of filter-induced entailment relations in fuzzy logics on one hand and in the relatives of the Belnap--Dunn logic, on the other hand. For the second question, we concentrate on $\Gsquareorder$ because it is closely connected to $\invG$. This way, we can better compare two paraconsistent negations in G\"{o}del logic~--- $\invol$ and $\neg$.

The remainder of the text is organised as follows. In Section~\ref{sec:biG}, we present bi-G\"{o}del logic and discuss preliminary notions that will be needed in the paper. We also show that there are only three entailment relations in $\biG$ that are induced by filters on $[0,1]$. To the best of our knowledge, these results, though not surprising, are not explicitly present in the literature. Section~\ref{sec:invG} is dedicated to $\invG$ and question~$\mathbf{I}$. In particular, we show that there are only six entailment relations in $\invG$ that are induced by filters on $[0,1]$, none of which coincides with the entailment defined via the order on $[0,1]$. In addition, we compare the strength of different entailment relations and also reduce some of them to the order-entailment. We also show that all filter-induced matrix-based entailment relations in $\invG$ are finitary and reducible to the entailment defined via the order on $[0,1]$. This solves an open problem from~\cite{ConiglioEstevaGispertGodo2021}. In Section~\ref{sec:Gsquare}, we present the language and semantics of $\Gsquareorder$. Section~\ref{sec:FIE} addresses question~$\mathbf{II}$: we prove that there are eleven entailment relations in $\Gsquareorder$ one of which coincides with the entailment defined via the upwards order on $[0,1]^{\Join}$. Finally, we summarise our results and outline future work in Section~\ref{sec:conclusion}.
\section[Entailments in $\invG$]{Entailment relations in bi-G\"{o}del logic\label{sec:biG}}
In this text, we will use the term ‘logic’ to denote a pair $\langle\mathscr{L},\mathbb{S}\rangle$ where $\mathscr{L}$ is a~\emph{language} (i.e., a~set of well-formed formulas) and $\mathbb{S}$ is its~\emph{semantics}. An \emph{entailment} is a~relation $\mathcal{E}\subseteq2^\mathscr{L}\!\times\!\mathscr{L}$. We use this framework to be able to speak of \emph{different} entailment relations \emph{in one} logic.

The language of bi-G\"{o}del logic ($\LbiG$) is constructed using a~fixed countable set of propositional variables $\Prop=\{p,q,r,s,p_1,\ldots\}$ via the following grammar. (For convenience, we include all connectives and constants.)
\begin{align*}
\LbiG\ni\phi&\coloneqq p\in\Prop\mid{\sim}\phi\mid\triangle\phi\mid(\phi\wedge\phi)\mid(\phi\vee\phi)\mid(\phi\rightarrow\phi)\mid(\phi\coimplies\phi)\mid\mathbf{0}\mid\mathbf{1}
\end{align*}
We will also use $\phi\leftrightarrow\chi$ as a~shorthand for $(\phi\rightarrow\chi)\wedge(\chi\rightarrow\phi)$. Given a~formula $\phi$ and a~set of formulas $\Gamma$, we use $\Prop(\phi)$ and $\Prop[\Gamma]$ to denote the sets of propositional variables occurring in $\phi$ and $\Gamma$, respectively.

The semantics $\biG$ is given in the following definition.
\begin{definition}\label{def:biGalgebra}
The bi-G\"{o}del algebra on $[0,1]$ denoted $[0,1]_{\biG}=\langle[0,1],0,1,\wedge_\Gmsf ,\vee_\Gmsf ,\rightarrow_{\Gmsf },\coimplies,\sim_\Gmsf ,\triangle_\Gmsf \rangle$ is defined as follows: for all $a,b\in[0,1]$, the standard operations are given by $a\wedge_\Gmsf b\coloneqq\min(a,b)$, $a\vee_\Gmsf b\coloneqq\max(a,b)$,
\begin{align*}
a\rightarrow_\Gmsf b&=
\begin{cases}
1\text{ if }a\leq b\\
b\text{ else}
\end{cases}
&
a\coimplies_\Gmsf b&=
\begin{cases}
0\text{ if }a\leq b\\
a\text{ else}
\end{cases}
&
{\sim}_\Gmsf a&=
\begin{cases}
0\text{ if }a>0\\
1\text{ else}
\end{cases}
&
\triangle_\Gmsf a&=
\begin{cases}
0\text{ if }a<1\\
1\text{ else}
\end{cases}
\end{align*}
A \emph{$\biG$-valuation} is a map $v:\Prop\rightarrow[0,1]$ that is extended to the complex formulas as follows:
\begin{itemize}
\item $v(\phi\circ\phi')=v(\phi)\circ_\Gmsf v(\phi')$ for every binary connective $\circ$;
\item $v(\flat\phi)=\flat_\Gmsf(v(\phi))$ for $\flat\in\{\sim,\triangle\}$;
\item $v(\mathbf{0})=0$, $v(\mathbf{1})=1$.
\end{itemize}
We say that $\phi$ is \emph{$\biG$-valid} iff $v(\phi)=1$ under every valuation. $\Gamma$ \emph{entails $\phi$ w.r.t.\ order on $[0,1]$} ($\Gamma\models^\leq_\biG\chi$)\footnote{For convenience, when $\Gamma$ is finite and given explicitly, we will omit the brackets in the entailment statements and write $\phi,\chi\models^\leq_\biG\psi$ instead of $\{\phi,\chi\}\models^\leq_\biG\psi$ (and accordingly for other logics and entailment relations).} iff $\inf\{v(\phi)\mid\phi\in\Gamma\}\leq v(\chi)$ for every $\biG$-valuation~$v$.
\end{definition}
\begin{convention}[Interval notation]
We use the following notational conventions concerning intervals and pairs of numbers:
\begin{itemize}
\item $[x,y]$, $[x,y)$, $(x,y]$, and $(x,y)$ denote \emph{intervals} from $x$ to $y$; square brackets mean that the number is included in the interval and round brackets that it is excluded: e.g., $[\sfrac{1}{3},1)=\{z\mid\sfrac{1}{3}\leq z<1\}$;
\item $\langle x,y\rangle$ denotes the \emph{ordered pair} consisting of $x$ and $y$.
\end{itemize}
\end{convention}
\begin{remark}\label{rem:redundantconnectivesbiG}
We quickly note that certain connectives are definable via other ones. In particular, it is easy to see that the following statements hold for every $\biG$-valuation~$v$.
\begin{align*}
v(\mathbf{1})&=v(p\rightarrow p)&v(\mathbf{0})&=v(p\coimplies p)&v({\sim}\phi)&=v(\phi\rightarrow\mathbf{0})&v(\triangle\phi)&=v(\mathbf{1}\coimplies(\mathbf{1}\coimplies\phi))
\end{align*}
\end{remark}
\begin{convention}[Valuations of sets of formulas]
Let $\Gamma\subseteq\LbiG$, we write $v[\Gamma]=x$ as a~shorthand for $\inf\{v(\phi)\mid\phi\in\Gamma\}=x$, and likewise for $v[\Gamma]\leq x$, $v[\Gamma]\geq x$, $v[\Gamma]\in\Xmc$ (with $\Xmc\subseteq[0,1]$).
\end{convention}

In this paper, we will be concerned with entailment relations induced by different filters on $[0,1]$ and $[0,1]^{\Join}$ that preserve the set of valid formulas. We define these notions as follows.
\begin{definition}[Filters on lattices]\label{def:latticefilters}
Let $\Lmc=\langle\Lmsf,\otimes,\oplus,\preceq\rangle$ be a~lattice with $\otimes$ and $\oplus$ its meet and join and~$\preceq$~--- the order on $\Lmsf$. A~\emph{filter on $\Lmc$} is a~non-empty subset $\Dmc\subsetneq\Lmsf$ s.t.
\begin{itemize}
\item if $d,d'\in\Dmc$, then $d\otimes d'\in\Dmc$;
\item if $d\in\Dmc$ and $d\preceq d'$, then $d'\in\Dmc$.
\end{itemize}

In addition, a~filter $\Dmc$ is
\begin{itemize}
\item \emph{prime} if $d\oplus d'\in\Dmc$ entails that $d\in\Dmc$ or $d'\in\Dmc$;
\item \emph{principal}\footnote{We adapt the notion from~\cite[\S8.1]{Malcev1974}.} if $\bigotimes\limits_{d\in\Dmc}d\in\Dmc$.
\end{itemize}
\end{definition}
\begin{definition}[Entailment relation induced by a~filter on ${[0,1]}$]\label{def:FIE01definition}
Let $\Dmc$ be a~filter on $[0,1]$. We say that an entailment relation $\models^\Dmc_\biG$ is \emph{induced by $\Dmc$} if the following holds for every $\Gamma\cup\{\chi\}\subseteq\LbiG$:
\begin{align*}
\Gamma\models^\Dmc_\biG\chi&\text{ iff }v[\Gamma]\in\Dmc\Rightarrow v(\chi)\in\Dmc\text{ for every $\biG$-valuation } v
\end{align*}

We also say that an entailment relation $\models^\Dmc_\biG$ (and its corresponding filter $\Dmc$) is \emph{validity-stable} if
\begin{align*}
v(\phi)\in\Dmc\text{ for every }\biG\text{ valuation }v&\text{ iff }\phi\text{ is }\biG\text{-valid}
\end{align*}
\end{definition}

First, we show that there are filters on $[0,1]$ that \emph{are not validity-stable}.
\begin{definition}[Point-generated filters on ${[0,1]}$]\label{def:pointgeneratedfilter01}
A~filter $\Dmc$ on $[0,1]$ is \emph{generated by $d$} iff $\Dmc=[d,1]$ and $d>0$. A~filter is \emph{point-generated} if it is generated by some $d>0$.
\end{definition}

It is clear that point-generated filters on $[0,1]$ are exactly principal filters.
\begin{theorem}\label{theorem:validitystable01}
An entailment relation $\models^\Dmc_\biG$ is validity-stable iff there is a~point-generated filter $\Dmc'$ s.t.\ $\Dmc\subseteq\Dmc'$.
\end{theorem}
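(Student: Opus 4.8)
The plan is to show that the condition ``$\Dmc$ is contained in a point-generated filter'' is equivalent to the purely numerical condition $\inf\Dmc>0$, and then to prove validity-stability against exactly this condition, the crucial tool being the order-invariance of $\biG$-valuations.

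First I would record the structure of filters on $[0,1]$. Any filter $\Dmc$ contains $1$ (it is non-empty and upward closed) and omits $0$ (if $0\in\Dmc$, upward closure gives $\Dmc=[0,1]$, contradicting properness). Setting $c=\inf\Dmc$ and using upward closure together with the definition of the infimum, one checks that $(c,1]\subseteq\Dmc\subseteq[c,1]$, so $\Dmc$ is either $(c,1]$ or $[c,1]$. Hence there is a point-generated $\Dmc'=[d,1]$ with $d>0$ and $\Dmc\subseteq\Dmc'$ precisely when $c>0$, while the unique filter with $c=0$ is $(0,1]$. So it suffices to prove that $\models^\Dmc_\biG$ is validity-stable iff $\inf\Dmc>0$.

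I would also simplify validity-stability itself. Since $1\in\Dmc$, every $\biG$-valid formula lies in $\Dmc$ under all valuations, so one implication in the definition is free; validity-stability is therefore equivalent to its contrapositive form: for every non-valid $\phi$ there is a valuation $v$ with $v(\phi)\notin\Dmc$. The engine for producing such $v$ is the observation that the operations of $[0,1]_{\biG}$ depend only on the linear order and the positions of $0$ and $1$; consequently, for every strictly increasing bijection $f\colon[0,1]\to[0,1]$ fixing $0$ and $1$, $f$ commutes with all $\biG$-connectives, so $f\circ v$ is again a $\biG$-valuation and $(f\circ v)(\phi)=f(v(\phi))$ for every $\phi$. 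Verifying this commutation connective-by-connective is routine but is the technical heart of the argument, and I would state it as a separate lemma; this is the only genuinely delicate point.

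With these in hand the two directions are short. If $\inf\Dmc>0$, fix $d>0$ with $\Dmc\subseteq[d,1]$ and take a non-valid $\phi$, witnessed by some $v_0$ with $v_0(\phi)=a<1$. If $a=0$ then $a\notin\Dmc$ already; if $a\in(0,1)$, pick an order-automorphism $f$ with $f(a)<d$ and put $v=f\circ v_0$, so that $v(\phi)=f(a)<d$ and hence $v(\phi)\notin\Dmc$. Either way $\Dmc$ is validity-stable. Conversely, if $\Dmc$ lies in no point-generated filter, then $\Dmc=(0,1]$, and the formula $p\vee{\sim}p$ serves as a counterexample: one computes $v(p\vee{\sim}p)=v(p)$ when $v(p)>0$ and $v(p\vee{\sim}p)=1$ when $v(p)=0$, so its set of values is exactly $(0,1]=\Dmc$; thus it always lies in $\Dmc$ yet takes values below $1$, so it is non-valid and witnesses the failure of validity-stability. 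Once the order-invariance lemma is available, the scaling argument and the choice of $p\vee{\sim}p$ complete the proof.
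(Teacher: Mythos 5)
Your proof is correct and follows essentially the same route as the paper's: the non-stable case is handled by the same witness $p\vee{\sim}p$ for the unique filter $(0,1]$ not contained in a point-generated one, and the stable case by composing a valuation with an order-preserving rescaling of $[0,1]$ fixing $0$ and $1$ and verifying by induction that it commutes with all connectives. Your packaging is slightly cleaner (explicit classification of filters via $\inf\Dmc$, and an order-automorphism rather than the paper's ad hoc non-surjective $h$), but the substance is identical.
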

\begin{proof}
First, let $\Dmc_0$ be s.t.\ there is no point-generating $\Dmc'$ that extends it. This means that $\Dmc_0=(0,1]=[0,1]\setminus\{0\}$. This is because there are two kinds of filters on $[0,1]$: (i) of the form $(d,1]$ or $[d,1]$ for some $0<d<1$ and (ii)~$\{1\}$. It is clear that $\Dmc_0$ is not stable w.r.t.\ valid formulas. Indeed, $p\vee{\sim}p$ is \emph{not} $\biG$-valid but $v(p\vee{\sim}p)>0$ for every $v$. Thus, $v(p\vee{\sim}p)\in\Dmc_0$ for every $v$.

Conversely, let $\Dmc\subseteq\Dmc'$ for some $\Dmc'$ generated by $d$. And let further, $v(\phi)<1$ for some $v$.\footnote{Since $\Dmc$ is validity-preserving, we do not need to consider the case when a~$\biG$-valid formula can have value not in $\Dmc$.} We show that there is a~valuation $v'$ s.t.\ $v'(\phi)<d$. We proceed by induction on $\phi\in\LbiG$. The basis case of $\phi=p$ is immediate. Let us now consider the most instructive cases: $\phi=\chi\wedge\psi$, $\phi=\chi\rightarrow\psi$, and $\phi=\chi\coimplies\psi$.

Let $d'<d$. We assume w.l.o.g.\ that $1>x>d'$, set:
\begin{align*}
h(y)&=
\begin{cases}
y&\text{if }y>x\\
d'&\text{if }y=x\\
\frac{1-x}{1-d'}\cdot y&\text{otherwise}
\end{cases}
\end{align*}
and define $v'(p)=h(v(p))$. One can see that $y\leq y'$ iff $h(y)\leq h(y')$ and, moreover, $h$ preserves $1$ and $0$. We show that $v'(\phi)=h(v(\phi))$ for every $\phi\in\LbiG$.

Let $v(\chi\wedge\psi)=x$. This means that $\min(v(\chi),v(\psi))=x$, whence by the induction hypothesis, we have $\min(v'(\chi),v'(\psi))=d'$, i.e., $v'(\chi\wedge\psi)=d'$, as required. If $v(\chi\rightarrow\psi)=x$, then $v(\chi)>v(\psi)=x$. Applying the induction hypothesis we get $v'(\chi)>v'(\psi)=d'$, i.e., $v'(\chi\rightarrow\psi)=d'$. Finally, if $v(\chi\coimplies\psi)=x$, then $x=v(\chi)>v(\psi)$, whence, by the induction hypothesis, we have $v'(\chi)=d'$ and $v'(\psi)<v'(\chi)$. Thus, $v'(\chi\coimplies\psi)=d'$, as required.
\end{proof}

We note briefly that $\models^{\Dmc_0}_\biG$ is, in fact, the classical entailment if we only consider \emph{$\{\coimplies,\triangle\}$-free formulas}. Indeed, we know that adding $p\vee{\sim}p$ to the Intuitionistic logic (and, of course, to any \emph{super-Intuitionistic logic}) produces classical propositional logic (cf., e.g.,~\cite{Gentzen1935-1}). We finish the section by observing that \emph{any} validity-stable filter $\Dmc$ except for $\{1\}$\footnote{Indeed, observe that $p\wedge{\sim}\triangle p\models^1_\biG q$ but $p\wedge{\sim}\triangle p\not\models^\leq_\biG q$.} induces \emph{order-entailment}. Thus, there are \emph{only two} validity-stable filter-induced entailment relations on $[0,1]$.
\begin{theorem}\label{theorem:orderisfilter01}
Let $\Dmc=(d,1]$ for some $d>0$. Then $\Gamma\models^\Dmc_\biG\chi$ iff $\Gamma\models^\leq_\biG\chi$.
\end{theorem}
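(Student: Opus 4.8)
The plan is to prove the two inclusions separately, noting that all the content lies in the right-to-left direction. For the easy direction, suppose $\Gamma\models^\leq_\biG\chi$ and take any $\biG$-valuation $v$ with $v[\Gamma]\in\Dmc$, i.e.\ $v[\Gamma]>d$. Then $v(\chi)\geq v[\Gamma]>d$, so $v(\chi)\in(d,1]=\Dmc$ as well, whence $\Gamma\models^\Dmc_\biG\chi$. This uses only that $\Dmc$ is upward closed and that order-entailment literally asserts $v[\Gamma]\leq v(\chi)$.

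For the converse I would argue by contraposition. Assume $\Gamma\not\models^\leq_\biG\chi$, so there is a valuation $v$ with $a\coloneqq v[\Gamma]>v(\chi)\eqqcolon b$; in particular $0\leq b<a\leq1$, and hence $a>0$. The idea is to \emph{rescale} the unit interval so that the strict gap between $b$ and $a$ is stretched across the threshold $d$. Concretely, I would put $t\coloneqq\frac{a+b}{2}$ (so that $b<t<a$ and $0<t<1$) and let $h\colon[0,1]\to[0,1]$ be the piecewise-linear map sending $0\mapsto0$, $t\mapsto d$, and $1\mapsto1$. Then $h$ is a continuous, strictly increasing bijection fixing $0$ and $1$, and by strict monotonicity $h(b)<d<h(a)$.

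The key technical step is a transfer lemma: setting $v'\coloneqq h\circ v$ on $\Prop$, one has $v'(\phi)=h(v(\phi))$ for every $\phi\in\LbiG$. This is established exactly as in the proof of Theorem~\ref{theorem:validitystable01}, whose induction uses only that $h$ is an order-isomorphism of $[0,1]$ (so it commutes with $\min$, $\max$, and the comparison tests defining $\rightarrow_\Gmsf$, $\coimplies_\Gmsf$, ${\sim}_\Gmsf$, $\triangle_\Gmsf$) that fixes $0$ and $1$ (so it commutes with the constants and with the ‘else’ outputs of the Gödel operations); our $h$ satisfies both hypotheses. Granting this, $v'(\chi)=h(b)<d$, so $v'(\chi)\notin\Dmc$, while $v'[\Gamma]=h(a)\in\Dmc$. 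This last identity is exactly where continuity of $h$ is needed, since $v'[\Gamma]=\inf\{h(v(\phi))\mid\phi\in\Gamma\}$ equals $h(\inf\{v(\phi)\mid\phi\in\Gamma\})=h(a)$ precisely because a continuous monotone map commutes with infima. Thus $v'$ witnesses $\Gamma\not\models^\Dmc_\biG\chi$, as required.

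The main obstacle is exactly this transfer lemma together with the inf-preservation: one must be certain that a single order-rescaling of the carrier is simultaneously compatible with all of the Gödel connectives and with the infimum over a possibly infinite $\Gamma$. The first half is already discharged by the induction of Theorem~\ref{theorem:validitystable01}, provided one records that it goes through for \emph{any} order-isomorphism fixing the bounds and not merely the particular map used there; the second half is what forces the choice of a \emph{continuous} $h$, which is why the naive discontinuous rescalings would be insufficient once $\Gamma$ is infinite.
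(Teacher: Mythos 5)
Your proof is correct and follows essentially the same route as the paper's: both directions are handled identically, and the substantive step is the same order-preserving rescaling $h$ of $[0,1]$ fixing $0$ and $1$ together with the inductive transfer lemma $v'(\psi)=h(v(\psi))$. The only cosmetic difference is your choice of a continuous $h$ and the appeal to continuity for preserving the infimum over an infinite $\Gamma$; the paper instead lets $h$ jump at $c=v[\Gamma]$ (sending everything below $c$ below $d$ and $c$ itself above $d$), and in fact monotonicity alone already gives $v'[\Gamma]\geq h(a)>d$, which is all that membership in the up-set $(d,1]$ requires.
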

\begin{proof}
It is clear that if $\Gamma\not\models^\Dmc_\biG\chi$, then $\Gamma\not\models^\leq_\biG\chi$. For the converse, assume that $\Gamma\not\models^\leq_\biG\chi$. This means that there is some $c\in[0,1]$ and a~$\biG$-valuation $v$ s.t.\ $v[\Gamma]=c$ and $v(\chi)<c$. Now let $h:[0,1]\rightarrow[0,1]$ be a~function s.t.
\begin{itemize}
\item $h(0)=0$, $h(1)=1$, $h(c)=\frac{1+d}{2}$;
\item $h(z)<d$ for every $z<c$;
\item $y\leq y'$ iff $h(y)\leq h(y')$\footnote{Note that this condition entails that if $y<y'$, then $h(y)<h(y')$.};
\end{itemize}
and define $v'(p)=h(v(p))$. It suffices to show that for any $\psi\in\LbiG$, it holds that $v'(\psi)=h(v(\psi))$. We proceed by induction on $\psi$. The basis case of $\psi=p$ holds by the construction of $h$. For the induction step, we consider $\psi=\varrho\wedge\sigma$, $\psi=\varrho\rightarrow\sigma$, and $\psi=\varrho\coimplies\sigma$ as other cases can be dealt with in a~similar manner.

Let $v(\varrho\wedge\sigma)=x$, then $\min(v(\varrho),v(\sigma))=x$ and w.l.o.g.\ $v(\varrho)\geq v(\sigma)=x$. By the induction hypothesis, we have $v'(\varrho)\geq v'(\sigma)=h(x)$, whence, $v'(\varrho\wedge\sigma)=h(x)=h(v(\varrho\wedge\sigma))$. If $v(\varrho\rightarrow\sigma)=x$, we have two cases: (i) $v(\varrho)\leq v(\sigma)$ (i.e., $x=1$); (ii) $v(\varrho)>v(\sigma)$. In the first case, it is immediate from the definition of $h$ and the induction hypothesis, that $v'(\varrho)\leq v'(\sigma)$, i.e., $v'(\varrho\rightarrow\sigma)=1$. In the second case, we have that $v(\varrho)>v(\sigma)=x$. Again, applying the induction hypothesis, we have that $v'(\varrho)>v'(\sigma)=h(x)$, whence, $v'(\varrho\rightarrow\sigma)=h(x)$, as required. Finally, the case of $\psi=\varrho\coimplies\sigma$ can be considered dually to that of $\varrho\rightarrow\sigma$. Namely, let $v(\varrho\coimplies\sigma)=x$. This means that $x=v(\varrho)>v(\sigma)$, whence, by the induction hypothesis, we have $h(x)=v'(\varrho)>v'(\sigma)$, i.e., $v'(\varrho\coimplies\sigma)=h(x)$. The result now follows.
\end{proof}
\section[Entailments in $\invG$]{Entailment relations in G\"{o}del logic with involution\label{sec:invG}}
The language of $\invG$ ($\LinvG$) is obtained from $\LbiG$ by introducing a~second negation $\invol$. The semantics of $\invol\phi$ is as expected:
\begin{align*}
v(\invol\phi)&=1-v(\phi)
\end{align*}
The notions of $\invG$-valuation, $\invG$-validity, entailment w.r.t.\ order on $[0,1]$, and filter-induced entailment can be straightforwardly obtained from those concerning $\biG$ (recall Definitions~\ref{def:biGalgebra} and~\ref{def:FIE01definition}). We summarise all these notions in the following definition.
\begin{definition}[Semantics of $\invG$]\label{def:invGsemantics}
A~\emph{$\invG$-valuation} is a~map $v:\Prop\rightarrow[0,1]$ that is extended to the complex $\LinvG$-formulas as follows (cf.~Definition~\ref{def:biGalgebra}):
\begin{align*}
v(\phi\circ\phi')&=v(\phi)\circ_\Gmsf v(\phi')&\flat\phi&=\flat_\Gmsf v(\phi)&v(\invol\phi)&=1-v(\phi)&v(\mathbf{0})&=0&v(\mathbf{1})&=1\tag{$\circ\in\{\wedge,\vee,\rightarrow,\coimplies\}$, $\flat\in\{\sim,\triangle\}$}
\end{align*}

Let $\Gamma\cup\{\phi,\chi\}\subseteq\LinvG$. We say that
\begin{itemize}
\item \emph{$\phi$ is $\invG$-valid} iff $v(\phi)=1$ in every $\invG$-valuation $v$;
\item \emph{$\Gamma$ entails $\chi$ w.r.t.\ the order on $[0,1]$} ($\Gamma\models^\leq_{\invG}\chi$) iff $\inf\{v(\phi)\mid\phi\in\Gamma\}\leq v(\chi)$ in every $\invG$-valuation $v$;
\item \emph{$\Gamma$ entails $\chi$ w.r.t.\ a~filter $\Dmc$ on $[0,1]$} ($\Gamma\models^\Dmc_{\invG}\chi$) iff $v(\chi)\in\Dmc$ for every $v$ s.t.\ $v[\Gamma]\in\Dmc$.
\end{itemize}

The notions of a~\emph{validity-stable} entailment relation and filter in $\invG$ are defined in the same manner as for $\biG$ (cf.~Definition~\ref{def:FIE01definition}).
\end{definition}

As we have already mentioned, $\invol$ allows us to define $\coimplies$ (and thus, $\triangle$) using $\rightarrow$ as follows:
\begin{align*}
\phi\coimplies\chi&\coloneqq\invol(\invol\chi\rightarrow\invol\phi)
\end{align*}
One can also notice that every $\psi\in\LinvG$ can be put into negation normal form w.r.t.\ $\invol$ via the following transformations:
\begin{align}
\invol\invol\phi&\rightsquigarrow\phi&\invol{\sim}\phi&\rightsquigarrow\mathbf{1}\coimplies\invol\phi&\invol\triangle\phi&\rightsquigarrow\mathbf{1}\coimplies\phi\label{equ:invGNNF}\\
\invol(\phi\wedge\chi)&\rightsquigarrow\invol\phi\vee\invol\chi&\invol(\phi\vee\chi)&\rightsquigarrow\invol\phi\wedge\invol\chi&\invol(\phi\rightarrow\chi)&\rightsquigarrow\invol\chi\coimplies\invol\phi&\invol(\phi\coimplies\chi)&\rightsquigarrow\invol\chi\rightarrow\invol\phi\nonumber
\end{align}

Recall from the~\nameref{sec:introduction} that coimplication in $\invG$ is defined in the same way as in $\Imsf_4\Cmsf_4$ and $\Gsquareorder$. The only difference is that $\neg$ and not $\invol$ is used. This is no surprise since $\invG$ can be interpreted as $\Gsquareorder$ with the set of values restricted as follows: $\{\langle x,y\rangle\mid x=1-y\}$ (cf.~\cite{Ferguson2014} for a~more detailed discussion). 

Another important property of $\invG$ is its ability to express $\sfrac{1}{2}$. Namely, $v(p\leftrightarrow\invol p)=1$ iff $v(p)=\sfrac{1}{2}$. Note that it follows immediately from Theorem~\ref{theorem:orderisfilter01} that given $\phi\in\LbiG$ s.t.\ $v(\phi)=x$ (with $0<x<1$) for some $v$, then there is a~valuation $v'$ s.t.\ $v'(\phi)=x'$ (with $0<x'<x$). This is not the case in $\invG$, however, as one can see that $v(p\vee\invol p)\geq\sfrac{1}{2}$ in every $v$ but $p\vee\invol p$ is not $\invG$-valid. It is thus clear that no filter generated by $c\leq\sfrac{1}{2}$ induces an entailment relation that is stable w.r.t.\ $\invG$-validity.

On the contrary, filters generated by $c>\sfrac{1}{2}$ \emph{are} stable w.r.t.\ $\invG$-validity. In particular, we can establish a~statement similar to Theorem~\ref{theorem:validitystable01}.
\begin{theorem}\label{theorem:validitystable01invol}
An entailment relation $\models^\Dmc_{\invG}$ is validity-stable iff there is a~filter $\Dmc'$ generated by $c>\sfrac{1}{2}$ s.t.\ $\Dmc\subseteq\Dmc'$.
\end{theorem}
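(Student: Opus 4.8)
The plan is to recast the filter condition numerically and then prove each direction, reusing the order-automorphism technique of Theorems~\ref{theorem:validitystable01} and~\ref{theorem:orderisfilter01}. Since $[0,1]$ is a chain, its filters are the up-sets $[c,1]$, $(c,1]$ for $c\in(0,1)$ together with $\{1\}$ and $(0,1]$, and for any filter one has $\Dmc\subseteq[c,1]$ iff $c\leq\inf\Dmc$. Hence a point-generated $\Dmc'$ with generator $c>\sfrac{1}{2}$ and $\Dmc\subseteq\Dmc'$ exists iff $\inf\Dmc>\sfrac{1}{2}$, so I would first reduce the theorem to the claim that $\models^\Dmc_{\invG}$ is validity-stable iff $\inf\Dmc>\sfrac{1}{2}$.

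For the right-to-left direction I would adapt the proof of Theorem~\ref{theorem:validitystable01}. Assuming $\Dmc\subseteq[c,1]$ with $c>\sfrac{1}{2}$, and given a non-valid $\phi$ with $v(\phi)=x<1$, it suffices to produce $v'$ with $v'(\phi)<c$ (so $v'(\phi)\notin\Dmc$); combined with the trivial fact that valid formulas take the value $1\in\Dmc$, this yields stability. If $x<c$ we take $v'=v$. Otherwise $x\in(\sfrac{1}{2},1)$, and I would fix $d'\in(\sfrac{1}{2},c)$ and build an order-automorphism $h$ of $[0,1]$ that \emph{commutes with the involution}, i.e.\ $h(1-y)=1-h(y)$ (so $h$ fixes $0$, $\sfrac{1}{2}$, $1$), with $h(x)=d'$; concretely, define $h$ piecewise-linearly on $[\sfrac{1}{2},1]$ by $\sfrac{1}{2}\mapsto\sfrac{1}{2}$, $x\mapsto d'$, $1\mapsto1$ and extend it to $[0,\sfrac{1}{2}]$ via $h(y)=1-h(1-y)$. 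Setting $v'(p)=h(v(p))$, an induction shows $v'(\psi)=h(v(\psi))$: the cases $\wedge,\vee,\rightarrow,\coimplies,\sim,\triangle$ go through exactly as before because $h$ is a strictly increasing bijection fixing $0$ and $1$, and the only new case is $\invol\varrho$, where $v'(\invol\varrho)=1-h(v(\varrho))=h(1-v(\varrho))=h(v(\invol\varrho))$ by the commutation property. Then $v'(\phi)=h(x)=d'<c$, as required.

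For the left-to-right direction I would show that $\inf\Dmc\leq\sfrac{1}{2}$ forces non-stability by producing a non-valid formula whose values all lie in $\Dmc$; here the key remark is that $(\sfrac{1}{2},1]\subseteq\Dmc$ whenever $\inf\Dmc\leq\sfrac{1}{2}$, since any $y>\sfrac{1}{2}$ exceeds some element of $\Dmc$ and $\Dmc$ is upward closed. When $\sfrac{1}{2}\in\Dmc$ the formula $p\vee\invol p$ suffices, as $v(p\vee\invol p)=\max(v(p),1-v(p))\geq\sfrac{1}{2}$ always while $p\vee\invol p$ is not valid. The delicate remaining filter is $\Dmc=(\sfrac{1}{2},1]$, for which $p\vee\invol p$ fails precisely because it attains $\sfrac{1}{2}$; this is the main obstacle, and it calls for a formula whose value is \emph{strictly} above $\sfrac{1}{2}$ everywhere yet not constantly $1$. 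The idea is to use the discontinuity of $\coimplies$ on the diagonal to delete the value $\sfrac{1}{2}$ from the range of $p\vee\invol p$: set
\begin{align*}
B&\coloneqq(p\coimplies\invol p)\vee(\invol p\coimplies p),&\Psi&\coloneqq B\vee\invol B.
\end{align*}
A short computation gives $v(B)=\max(v(p),1-v(p))$ when $v(p)\neq\sfrac{1}{2}$ and $v(B)=0$ when $v(p)=\sfrac{1}{2}$, so $v(B)$ is never equal to $\sfrac{1}{2}$. Since $\max(t,1-t)=\sfrac{1}{2}$ holds only at $t=\sfrac{1}{2}$, it follows that $v(\Psi)=\max(v(B),1-v(B))>\sfrac{1}{2}$ for every $v$, whereas $v(\Psi)=v(p)\in(\sfrac{1}{2},1)$ whenever $v(p)\in(\sfrac{1}{2},1)$, so $\Psi$ is not valid. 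As the range of $\Psi$ is $(\sfrac{1}{2},1]\subseteq\Dmc$, this establishes non-stability and finishes the argument.
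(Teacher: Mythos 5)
Your proof is correct and follows essentially the same strategy as the paper: an involution-commuting order-automorphism $h$ with $h(1-y)=1-h(y)$ for the right-to-left direction, and for the left-to-right direction a case split on whether $\sfrac{1}{2}\in\Dmc$ (using $p\vee\invol p$) versus $\Dmc=(\sfrac{1}{2},1]$. The only difference is the witness for the latter case: the paper uses the two-variable formula ${\sim}\triangle((p\vee\invol p)\rightarrow(q\vee\invol q))\rightarrow(p\vee\invol p)$, whereas your one-variable $\Psi=B\vee\invol B$ with $B=(p\coimplies\invol p)\vee(\invol p\coimplies p)$ exploits the discontinuity of $\coimplies$ to the same effect and checks out.
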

\begin{proof}
First, assume that $\Dmc$ is not contained in a~$\Dmc'$ generated by $c>\sfrac{1}{2}$. If $\sfrac{1}{2}\in\Dmc$, we have that $v(p\vee\invol p)\in\Dmc$ for every $v$ as noted above. Otherwise, we have that $\Dmc=(\sfrac{1}{2},1]$. To see that $\models^{(\sfrac{1}{2},1]}_{\invG}$ is not stable w.r.t.\ $\invG$-validity, consider the following formula: $\phi={\sim}\triangle((p\vee\invol p)\rightarrow(q\vee\invol q))\rightarrow(p\vee\invol p)$. It is clear that $\phi$ is not $\invG$-valid: if $v(p)=\sfrac{2}{3}$ and $v(q)=\sfrac{1}{2}$, then $v(\phi)=\sfrac{2}{3}$. On the other hand, we have that $v(\phi)>\sfrac{1}{2}$ in every $v$. Indeed, $v(\phi)\geq\sfrac{1}{2}$ is always the case since $p\vee\invol p$ is on the right-hand side of the implication and $v(p\vee\invol p)\geq\sfrac{1}{2}$ in each~$v$. Furthermore, if $v(p\vee\invol p)=\sfrac{1}{2}$, then $v({\sim}\triangle((p\vee\invol p)\rightarrow(q\vee\invol q)))=0$, whence, $v(\phi)=1$. Thus, there is no valuation $v$ s.t.\ $v(\phi)=\sfrac{1}{2}$. I.e., $v(\phi)\in(\sfrac{1}{2},1]$ in every $\invG$-valuation.

For the converse, let $\Dmc'=[c',1]$ with $c'>\sfrac{1}{2}$ and $\Dmc\subseteq\Dmc'$. For every $\phi\in\LinvG$, we show that if there is a~valuation $v$ s.t.\ $v(\phi)<x<1$, then there is a~valuation $v'$ s.t.\ $v'(\phi)<c'$. Namely, we consider a~function $h:[0,1]\rightarrow[0,1]$ with the following properties:
\begin{itemize}
\item $h(1-y)=1-h(y)$ (and, in particular, $h(d)=d$ for $d\in\{0,\sfrac{1}{2},1\}$);
\item $y\leq y'$ iff $h(y)\leq h(y')$;
\item $h(x)=c'$.
\end{itemize}
We define $v'(p)=h(v(p))$ and verify that $v'(\phi)=h(v(\phi))$ for each $\invG$-valuation $v$ and $\phi\in\LinvG$. We proceed by induction on $\phi$. The cases of $\LbiG$-connectives can be considered in the same way as in Theorems~\ref{theorem:validitystable01} and~\ref{theorem:orderisfilter01}. Let now $\phi=\invol\varrho$ and $v(\invol\varrho)=y$. We have then, that $v(\varrho)=1-y$, whence, by the induction hypothesis, $v'(\varrho)=h(1-y)$, and thus, $v'(\varrho)=1-h(y)$ and $v'(\invol\varrho)=h(y),$ by the definition of $h$. Now, if $y<x$, then, $h(y)<h(x)=c'$. Thus, $v'(\invol\varrho)\notin\Dmc'$, and, \emph{a~fortiori}, $v'(\invol\varrho)\notin\Dmc$, as required.
\end{proof}

One can easily observe that, in fact, \emph{no} filter-induced entailment relation $\models^\Dmc_{\invG}$ coincides with $\models^\leq_{\invG}$. Indeed, if $\sfrac{1}{2}\in\Dmc$, then $v(p\vee\invol p)\in\Dmc$ for every $v$ as we have remarked earlier. If $\sfrac{1}{2}\notin\Dmc$, then $p\wedge\invol p\models^\Dmc_{\invG}q$. But one can see that $p\wedge\invol p\not\models^\leq_{\invG}q$. This should not be surprising because (recall the discussion in the \nameref{sec:introduction}) already in the three-valued De Morgan algebra no filter-induced entailment coincides with the order-entailment.

The question now is \emph{how many} filter-induced entailment relations are there in $\invG$ and how many of them are validity-stable. As we saw in Section~\ref{sec:biG}, there are three filter-induced entailments in $\biG$: $\models^1_\biG$ (which coincides with the global entailment in linear Intuitionistic Kripke frames), $\models^{[x,1]}_\biG$ (which coincides with $\models^\leq_\biG$ and the local entailment), and $\models^{\Dmc_0}_\biG$. The former two are stable w.r.t.\ the set of $\biG$-valid formulas. The next statements show that there are exactly six filter-induced entailment relations in $\invG$ (cf.~Fig.~\ref{fig:01filters}) none of which coincides with~$\models^\leq_{\invG}$.
\begin{theorem}\label{theorem:6entailmentsinvG}
Let $\frac{1}{2}<x<1$ and $0<x'<\frac{1}{2}$. There are only six filter-induced entailment relations in $\invG$: $\models^1_{\invG}$, $\models^{(x,1]}_{\invG}$, $\models^{(\sfrac{1}{2},1]}_{\invG}$, $\models^{[\sfrac{1}{2},1]}_{\invG}$, $\models^{(x',1]}_{\invG}$, and $\models^{(0,1]}_{\invG}$.
\end{theorem}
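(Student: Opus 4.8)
The plan is to collapse the (a~priori continuum-sized) family of filters on $[0,1]$ to six equivalence classes by the rescaling technique already used in the proofs of Theorems~\ref{theorem:orderisfilter01} and~\ref{theorem:validitystable01invol}, and then to separate the six survivors by explicit instances. The workhorse is a~\emph{rescaling lemma}: if $h\colon[0,1]\to[0,1]$ is an order-automorphism commuting with the involution, i.e.\ $h(1-y)=1-h(y)$ (which forces $h(0)=0$, $h(\sfrac12)=\sfrac12$, $h(1)=1$), then $v'=h\circ v$ satisfies $v'(\phi)=h(v(\phi))$ for every $\phi\in\LinvG$ and every $\invG$-valuation $v$. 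I~would prove this by induction on $\phi$: the cases of $\wedge,\vee,\rightarrow,\coimplies,{\sim},\triangle$ use only that $h$ is an increasing bijection fixing $0$ and $1$, and the case $\invol\varrho$ is exactly the commutation condition. Since $h$ is continuous and monotone, it also commutes with infima, so $v'[\Gamma]=h(v[\Gamma])$.

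First I would establish the upper bound. Every filter on $[0,1]$ is $\{1\}$, or $[c,1]$ with $c\in(0,1)$, or $(c,1]$ with $c\in[0,1)$, since an upward-closed proper subset of $[0,1]$ is automatically closed under meets. The involution-commuting automorphisms fix $0,\sfrac12,1$ and act transitively on each of $(0,\sfrac12)$ and $(\sfrac12,1)$; hence, transforming counterexample valuations as in Theorem~\ref{theorem:orderisfilter01}, all $[c,1]$ with $c\in(\sfrac12,1)$ induce the same relation, and likewise for $(c,1]$ with $c\in(\sfrac12,1)$, for $[c,1]$ with $c\in(0,\sfrac12)$, and for $(c,1]$ with $c\in(0,\sfrac12)$. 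This leaves at most eight candidates: $\{1\}$, the two upper-interval classes, $(\sfrac12,1]$, $[\sfrac12,1]$, the two lower-interval classes, and $(0,1]$. To reach six I~would show that the open and closed \emph{interior} cuts coincide, i.e.\ $\models^{[c,1]}_{\invG}\,=\,\models^{(c,1]}_{\invG}$ for $c\in(\sfrac12,1)$ and for $c\in(0,\sfrac12)$: given a~counterexample valuation for one filter, one builds an involution-commuting $h$ that nudges the generator $c$ slightly (up or down, depending on the inclusion to be violated) while keeping the value of the offending subformula on the correct side of $c$. This needs a~short case analysis according to the position of the relevant values relative to $c$, $1-c$, and $\sfrac12$, and is the laborious (though routine) part of the upper bound.

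Next I~would prove the six relations pairwise distinct using the test formulas $p\wedge\invol p$ (range $[0,\sfrac12]$), $p\vee\invol p$ (range $[\sfrac12,1]$), $p\vee{\sim}p$ (range $(0,1]$), $\triangle p$, and the formula $\phi={\sim}\triangle((p\vee\invol p)\rightarrow(q\vee\invol q))\rightarrow(p\vee\invol p)$ from the proof of Theorem~\ref{theorem:validitystable01invol} (range $(\sfrac12,1]$). The separations are: $p\models^\Dmc_{\invG}\triangle p$ holds iff $\Dmc=\{1\}$; $\emptyset\models^\Dmc_{\invG}p\vee{\sim}p$ holds iff $\Dmc=(0,1]$; $\emptyset\models^\Dmc_{\invG}\phi$ holds iff $(\sfrac12,1]\subseteq\Dmc$, so it fails precisely for $\{1\}$ and the upper-interval class and thereby isolates the latter; and $p\wedge\invol p\models^\Dmc_{\invG}q$ holds iff $\sfrac12\notin\Dmc$ (the designated band avoids the range of $p\wedge\invol p$), which singles out $(\sfrac12,1]$ among the filters containing no point of $[0,\sfrac12]$ other than those already pinned. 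Chaining these isolates all classes except the pair $[\sfrac12,1]$ versus the lower-interval class.

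The genuinely delicate separation, and what I~expect to be the main obstacle, is $\models^{[\sfrac12,1]}_{\invG}$ versus $\models^{(x',1]}_{\invG}$ for $0<x'<\sfrac12$: both filters contain $\sfrac12$, and by the rescaling lemma the value-set of any single formula meets $(0,\sfrac12)$ in either $\varnothing$ or all of $(0,\sfrac12)$, so no formula can keep all its values above $x'$ while dipping below $\sfrac12$; naive separators therefore fail. The way around this is a~formula that \emph{cuts at $\sfrac12$}: take $\chi=p\wedge\triangle(\invol p\rightarrow p)$, for which $v(\chi)=v(p)$ when $v(p)\geq\sfrac12$ and $v(\chi)=0$ when $v(p)<\sfrac12$. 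Then $p\models^{[\sfrac12,1]}_{\invG}\chi$ holds, whereas taking $v(p)\in(x',\sfrac12)$ witnesses $p\not\models^{(x',1]}_{\invG}\chi$. Combining the reduction of filters to eight classes, the interior open/closed collapse, and these separations yields exactly the six listed entailment relations.
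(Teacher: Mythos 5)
Your proposal is correct and follows the same two-part strategy as the paper's proof: first collapse the continuum of filters into finitely many classes by transporting countermodels along strictly increasing maps $h$ with $h(1-y)=1-h(y)$ (your transitivity-plus-nudge argument is exactly the paper's items $\mathbf{A}$--$\mathbf{D}$), then separate the six survivors by explicit instances. The one genuinely different ingredient is the hardest separation, $\models^{[\sfrac{1}{2},1]}_{\invG}$ versus $\models^{(x',1]}_{\invG}$: the paper uses the vacuously valid two-premise entailment $q\wedge\invol q,{\sim}\triangle((p\wedge\invol p)\rightarrow(q\wedge\invol q))\models r$, which holds for $[\sfrac{1}{2},1]$ only because its premises can never be simultaneously designated, whereas you use the non-vacuous single-premise instance $p\models p\wedge\triangle(\invol p\rightarrow p)$, whose conclusion equals $v(p)$ when $v(p)\geq\sfrac{1}{2}$ and $0$ otherwise; your witness checks out and is arguably cleaner. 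Two presentational points to tighten when writing this up: (i) the maps used for the open/closed collapse cannot be automorphisms (they must jump at the generator), so the rescaling lemma should be stated for strictly increasing \emph{injections} fixing $0$ and $1$ and commuting with the involution --- bijectivity and continuity are never used in the induction --- and for infinite $\Gamma$ one should argue directly that all premise values, hence their infimum, land strictly inside the target filter, rather than invoking commutation with infima; (ii) if you use $\Gamma=\varnothing$ in the separators for $(0,1]$ and the upper-interval class, either fix the convention $\inf\varnothing=1$ or add a dummy premise as the paper does.
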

\begin{proof}
We begin with establishing that all these entailment relations are pairwise distinct. First, it is clear that $p\wedge{\sim}\triangle p$ entails $q$ \emph{only} w.r.t.\ $\models^1_{\invG}$. Second, $q$ entails $p\vee{\sim}p$ \emph{only} w.r.t.\ $\models^{(0,1]}_{\invG}$. Third, $\models^{(x,1]}_{\invG}$ and any $\Xmc\in\left\{\models^{(\sfrac{1}{2},1]}_{\invG},\models^{[\sfrac{1}{2},1]}_{\invG},\models^{(x',1]}_{\invG}\right\}$ are distinct by Theorem~\ref{theorem:validitystable01invol} (since $\models^{(x,1]}_{\invG}$ is validity-stable but $\Xmc$'s are not). Fourth, $\models^{(\sfrac{1}{2},1]}_{\invG}$ is distinct from $\Ymc\in\left\{\models^{[\sfrac{1}{2},1]}_{\invG},\models^{(x',1]}_{\invG}\right\}$ because $p\wedge\invol p$ entails $q$ w.r.t.\ $\Dmc$ \emph{iff} $\sfrac{1}{2}\notin\Dmc$. Finally, one can see that
\begin{align}\label{equ:6entailmentsinvG1}
q\wedge\invol q,{\sim}\triangle((p\wedge\invol p)\rightarrow(q\wedge\invol q))\models^{[\sfrac{1}{2},1]}_{\invG}r
\end{align}
but
\begin{align}\label{equ:6entailmentsinvG2}
q\wedge\invol q,{\sim}\triangle((p\wedge\invol p)\rightarrow(q\wedge\invol q))\not\models^{(x',1]}_{\invG}r
\end{align}
Indeed, to see that~\eqref{equ:6entailmentsinvG1} holds, observe that if $v(q\wedge\invol q)\in[\sfrac{1}{2},1]$, then $v({\sim}\triangle((p\wedge\invol p)\rightarrow(q\wedge\invol q)))=0$ because there is no valuation $v$ s.t.\ $v(p\wedge\invol p)>\sfrac{1}{2}$. Thus, it is impossible that $v(q\wedge\invol q),v({\sim}\triangle((p\wedge\invol p)\rightarrow(q\wedge\invol q)))\in[\sfrac{1}{2},1]$. On the contrary, for any $x'<\sfrac{1}{2}$, we can construct a~valuation that witnesses~\eqref{equ:6entailmentsinvG2}. Namely, let $x'<y<\sfrac{1}{2}$ and set $v(p)=\sfrac{1}{2}$, $v(q)=y$, and $v(r)=0$. It is clear that $v(q\wedge\invol q)=y$ and $v({\sim}\triangle((p\wedge\invol p)\rightarrow(q\wedge\invol q)))=1$. Thus, the entailment is falsified.

It remains to show that the following statements hold (below, $x,y<1$ and $x',y'>0$):
\begin{enumerate}
\item[$\mathbf{A}$] for any $x,y>\sfrac{1}{2}$, $\models^{(x,1]}_{\invG}$ coincides with $\models^{(y,1]}_{\invG}$;
\item[$\mathbf{B}$] for any $x>\sfrac{1}{2}$, $\models^{(x,1]}_{\invG}$ coincides with $\models^{[x,1]}_{\invG}$;
\item[$\mathbf{C}$] for any $x',y'<\sfrac{1}{2}$, $\models^{(x',1]}_{\invG}$ coincides with $\models^{(y',1]}_{\invG}$;
\item[$\mathbf{D}$] for any $x'<\sfrac{1}{2}$, $\models^{(x',1]}_{\invG}$ coincides with $\models^{[x',1]}_{\invG}$.
\end{enumerate}

$\mathbf{A}$ can be established as follows. Take a~function $h$ as in Theorem~\ref{theorem:validitystable01invol} and define $h(x)=y$. Now, for $v$ witnessing $\Gamma\not\models^{(x,1]}_{\invG}\chi$, define $v'(p)=h(v(p))$. Repeating the argument from Theorem~\ref{theorem:validitystable01invol}, we will have that $v'$ witnesses $\Gamma\not\models^{(y,1]}_{\invG}\chi$. The converse direction can be shown symmetrically.

For $\mathbf{B}$, proceed as follows. Assume that $v$ witnesses $\Gamma\not\models^{(x,1]}_{\invG}\chi$, pick $z\in(\sfrac{1}{2},x)$, and let $h$ be any function with the following properties:
\begin{itemize}
\item $h(1-y)=1-h(y)$, $h(x)=z$;
\item $y\leq y'$ iff $h(y)\leq h(y')$;
\item $y\in(x,1]$ iff $h(y)\in(x,1]$ and $y\in[0,x]$ iff $h(y)\in[0,z]$.
\end{itemize}
Now set $v'(p)=h(v(p))$. In the same way as in Theorem~\ref{theorem:validitystable01invol}, one can show that $v'(\phi)=h(v(\phi))$ for each $\phi\in\LinvG$. It is clear that if $v[\Gamma]\in(x,1]$ but $v(\chi)\in[0,x]$, then $v'[\Gamma]\in(x,1]$ and $v'(\chi)\in[0,z]$. Thus, $v'$ witnesses $\Gamma\not\models^{[x,1]}_{\invG}\chi$, as required.

Conversely, let $v$ witness $\Gamma\not\models^{[x,1]}_{\invG}\chi$ and pick $z\in(x,1)$. Now take a~function $h$ s.t.
\begin{itemize}
\item $h(1-y)=1-h(y)$, $h(x)=z$;
\item $y\leq y'$ iff $h(y)\leq h(y')$;
\item $y\in[x,1]$ iff $h(y)\in[z,1]$ and $y\in[0,x)$ iff $h(y)\in[0,x)$.
\end{itemize}
Once again, one can verify by induction on $\LinvG$ formulas that if we define $v'(p)=h(v(p))$, then $v'$ will witness $\Gamma\not\models^{(x,1]}_{\invG}\chi$.

$\mathbf{C}$ and $\mathbf{D}$ can be shown similarly to $\mathbf{A}$ and $\mathbf{B}$, respectively. The main difference is that in $\mathbf{D}$, one needs to pick $z\in(0,x')$ when proceeding from $\Gamma\not\models^{(x',1]}_{\invG}\chi$ to $\Gamma\not\models^{[x',1]}_{\invG}\chi$ and $z\in(x',\sfrac{1}{2})$ for the converse direction.
\end{proof}
\begin{figure}
\centering
\begin{tikzpicture}[>=stealth,relative]
\node (1) at (0,1.5) {};
\node (1filter) at (-.01,1.35) {$\bullet$};
\node (1label) at (-.5,1.5) {$1$};
\node (half) at (0,0) {$-$};
\node (halflabel) at (-.5,0) {$\frac{1}{2}$};
\node (0) at (0,-1.5) {};
\node (0label) at (-.5,-1.5) {$0$};
\path[|-|,draw] (0) to (1);
\end{tikzpicture}
\hfill
\begin{tikzpicture}[>=stealth,relative]
\node (1) at (0,1.5) {};
\node (1label) at (-.5,1.5) {$1$};
\node (half) at (0,0) {$-$};
\node (halflabel) at (-.5,0) {$\frac{1}{2}$};
\node (0) at (0,-1.5) {};
\node (0label) at (-.5,-1.5) {$0$};
\path[|-,draw] (0) to (0,.45);
\path[|-,draw] (1) to (0,.55);
\node(smallfilter) at (-.01,.5) {$\circ$};
\node(sflb) at (-.4,.5) {};
\node(sfrb) at (.4,.5) {};
\node(sflt) at (-.4,1.36) {};
\node(sfrt) at (.4,1.36) {};
\path[dashed,draw] (sflb) to (smallfilter);
\path[dashed,draw] (sfrb) to (smallfilter);
\path[dashed,draw] (sflt) to (sfrt);
\path[dashed,draw] (sflb) to (sflt);
\path[dashed,draw] (sfrb) to (sfrt);
\end{tikzpicture}
\hfill
\begin{tikzpicture}[>=stealth,relative]
\node (1) at (0,1.5) {};
\node (1label) at (-.5,1.5) {$1$};
\node (half) at (0,0) {$\circ$};
\node (halflabel) at (-.5,0) {$\frac{1}{2}$};
\node (0) at (0,-1.5) {};
\node (0label) at (-.5,-1.5) {$0$};
\path[|-,draw] (0) to (0,-.04);
\path[|-,draw] (1) to (0,.05);
\node(sflb) at (-.4,0) {};
\node(sfrb) at (.4,0) {};
\node(sflt) at (-.4,1.36) {};
\node(sfrt) at (.4,1.36) {};
\path[dashed,draw] (sflb) to (half);
\path[dashed,draw] (sfrb) to (half);
\path[dashed,draw] (sflt) to (sfrt);
\path[dashed,draw] (sflb) to (sflt);
\path[dashed,draw] (sfrb) to (sfrt);
\end{tikzpicture}
\hfill
\begin{tikzpicture}[>=stealth,relative]
\node (1) at (0,1.5) {};
\node (1label) at (-.5,1.5) {$1$};
\node (half) at (0,0) {$\bullet$};
\node (halflabel) at (-.5,0) {$\frac{1}{2}$};
\node (0) at (0,-1.5) {};
\node (0label) at (-.5,-1.5) {$0$};
\path[|-|,draw] (0) to (1);
\node(sflb) at (-.4,0) {};
\node(sfrb) at (.4,0) {};
\node(sflt) at (-.4,1.36) {};
\node(sfrt) at (.4,1.36) {};
\path[dashed,draw] (sflb) to (sfrb);
\path[dashed,draw] (sflt) to (sfrt);
\path[dashed,draw] (sflb) to (sflt);
\path[dashed,draw] (sfrb) to (sfrt);
\end{tikzpicture}
\hfill
\begin{tikzpicture}[>=stealth,relative]
\node (1) at (0,1.5) {};
\node (1label) at (-.5,1.5) {$1$};
\node (half) at (0,0) {$-$};
\node (halflabel) at (-.5,0) {$\frac{1}{2}$};
\node (0) at (0,-1.5) {};
\node (0label) at (-.5,-1.5) {$0$};
\path[|-,draw] (0) to (0,-.55);
\path[|-,draw] (1) to (0,-.44);
\node(smallfilter) at (0,-.5) {$\circ$};
\node(sflb) at (-.4,-.5) {};
\node(sfrb) at (.4,-.5) {};
\node(sflt) at (-.4,1.36) {};
\node(sfrt) at (.4,1.36) {};
\path[dashed,draw] (sflb) to (smallfilter);
\path[dashed,draw] (sfrb) to (smallfilter);
\path[dashed,draw] (sflt) to (sfrt);
\path[dashed,draw] (sflb) to (sflt);
\path[dashed,draw] (sfrb) to (sfrt);
\end{tikzpicture}
\hfill
\begin{tikzpicture}[>=stealth,relative]
\node (1) at (0,1.5) {};
\node (1label) at (-.5,1.5) {$1$};
\node (half) at (0,0) {$-$};
\node (halflabel) at (-.5,0) {$\frac{1}{2}$};
\node (0) at (0,-1.5) {};
\node (0label) at (-.5,-1.5) {$0$};
\path[-|,draw] (0) to (1);
\node(sflb) at (-.4,-1.46) {};
\node(sfrb) at (.4,-1.46) {};
\node(sflt) at (-.4,1.36) {};
\node(sfrt) at (.4,1.36) {};
\node(bigfilter) at (-.01,-1.45) {$\circ$};
\path[dashed,draw] (sflb) to (bigfilter);
\path[dashed,draw] (sfrb) to (bigfilter);
\path[dashed,draw] (sflt) to (sfrt);
\path[dashed,draw] (sflb) to (sflt);
\path[dashed,draw] (sfrb) to (sfrt);
\end{tikzpicture}
\caption{Filters (rectangles with dashed edges) inducing different entailment relations in $\invG$.}
\label{fig:01filters}
\end{figure}

Recall that in $\biG$, every filter-induced validity-stable entailment relation can be reduced to $\models^\leq_\biG$. Namely, we have (1) $\Gamma\models^1_\biG\chi$ iff $\{\triangle\phi\mid\phi\in\Gamma\}\models^\leq_\biG\chi$ and (2) $\models^\Dmc_\biG=\models^\leq_\biG$ for $\Dmc=(x,1]$ with $x>0$. As the next statement shows, a~similar reduction is possible in the case of $\invG$ for entailment relations induced by filters of the form $[x,1]$.
\begin{theorem}\label{theorem:entailmentreductionsinvG}
Let $\Gamma\cup\{\chi\}\subseteq\LinvG$, $x>\sfrac{1}{2}>y$, and $p$ be fresh. Denote
\begin{align*}
\triangle\Gamma&\coloneqq\{\triangle\phi\mid\phi\in\Gamma\}&\Gamma^{\uparrow!}&\coloneqq\{\triangle(p\rightarrow\phi)\mid\phi\in\Gamma\}\cup\{{\sim}\triangle(p\rightarrow\invol p)\}\\
\Gamma^{\sfrac{1}{2}}&\coloneqq\{\triangle(p\rightarrow\phi)\mid\phi\in\Gamma\}\cup\{\triangle(p\leftrightarrow\invol p)\}&\Gamma^{\downarrow!}&\coloneqq\{\triangle(p\rightarrow\phi)\mid\phi\in\Gamma\}\cup\{{\sim}\triangle(\invol p\rightarrow p)\}
\end{align*}
Then the following equivalences hold.
\begin{align*}
\Gamma\models^1_{\invG}\chi&\text{ iff }\triangle\Gamma\models^\leq_{\invG}\chi&\Gamma\models^{[x,1]}_{\invG}\chi&\text{ iff }\Gamma^{\uparrow!}\models^\leq_{\invG}\triangle(p\rightarrow\chi)\\
\Gamma\models^{[\sfrac{1}{2},1]}_{\invG}\chi&\text{ iff }\Gamma^{\sfrac{1}{2}}\models^\leq_{\invG}\triangle(p\rightarrow\chi)&\Gamma\models^{[y,1]}_{\invG}\chi&\text{ iff }\Gamma^{\downarrow!}\models^\leq_{\invG}\triangle(p\rightarrow\chi)
\end{align*}
\end{theorem}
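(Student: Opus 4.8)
The plan is to establish each of the four equivalences by translating the filter-membership conditions on the left into order-entailment conditions on the right. The guiding principle is that the fresh variable $p$ acts as a movable ‘threshold marker’: when we push its value to sit exactly at the relevant cut-point $c$ of the filter, the formula $\triangle(p\rightarrow\phi)$ becomes a Boolean test of whether $v(\phi)\geq v(p)$, and the auxiliary conjuncts (${\sim}\triangle(p\rightarrow\invol p)$, $\triangle(p\leftrightarrow\invol p)$, ${\sim}\triangle(\invol p\rightarrow p)$) pin $v(p)$ into the correct region relative to $\sfrac{1}{2}$. I would prove the four cases in parallel since they share the same skeleton, highlighting only where the constraints on $v(p)$ differ.

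First I would dispose of the simplest case $\Gamma\models^1_{\invG}\chi$ iff $\triangle\Gamma\models^\leq_{\invG}\chi$. For the forward direction, suppose $v[\triangle\Gamma]>v(\chi)$; since each $v(\triangle\phi)\in\{0,1\}$, having $v[\triangle\Gamma]>v(\chi)$ with $v(\chi)<1$ forces $v[\triangle\Gamma]=1$, i.e.\ $v(\phi)=1$ for all $\phi\in\Gamma$, yet $v(\chi)<1$, contradicting $\Gamma\models^1_{\invG}\chi$. The converse is symmetric, using that $v(\triangle\phi)=1$ exactly when $v(\phi)=1$. This warm-up isolates the recurring idea: $\triangle$ collapses a truth-degree comparison into a crisp $\{0,1\}$ value, converting $1$-membership into an order statement.

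\emph{The technical heart} of the other three equivalences is the following claim about the marker. Given any $\invG$-valuation $v$ and any target cut $c\in\{x,\sfrac{1}{2},y\}$, I want to build a valuation $v'$ that agrees with $v$ on all variables of $\Gamma\cup\{\chi\}$ but sets $v'(p)=c$ (possible since $p$ is fresh). Then one computes: $v'(\triangle(p\rightarrow\phi))=1$ iff $v(\phi)\geq c$, which is precisely ‘$v(\phi)\in[c,1]$’; hence $v'[\{\triangle(p\rightarrow\phi)\mid\phi\in\Gamma\}]=1$ iff $v[\Gamma]\in[c,1]$. The side conjuncts serve to \emph{force} the value of $p$ to be $c$ when reading the entailment in the other direction: in $\Gamma^{\uparrow!}$, the conjunct ${\sim}\triangle(p\rightarrow\invol p)$ evaluates to $1$ exactly when $v(p)>\invol v(p)=1-v(p)$, i.e.\ $v(p)>\sfrac{1}{2}$, matching the filter $[x,1]$ with $x>\sfrac{1}{2}$; in $\Gamma^{\sfrac{1}{2}}$, the conjunct $\triangle(p\leftrightarrow\invol p)$ is $1$ exactly when $v(p)=\sfrac{1}{2}$; and in $\Gamma^{\downarrow!}$, the conjunct ${\sim}\triangle(\invol p\rightarrow p)$ is $1$ exactly when $v(p)<\sfrac{1}{2}$, matching $[y,1]$ with $y<\sfrac{1}{2}$. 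I would then combine this with Theorem~\ref{theorem:6entailmentsinvG}, which guarantees that the entailment $\models^{[x,1]}_{\invG}$ depends only on the region ($x>\sfrac{1}{2}$, $=\sfrac{1}{2}$, or $<\sfrac{1}{2}$) of the generating point, so fixing $p$ to a single representative $c$ in each region loses no generality.

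\textbf{The main obstacle} I anticipate is the backward directions, where one starts from a valuation $v'$ witnessing the failure of the order-entailment on the right and must extract a $\Gamma$-valuation witnessing the failure of the filter-entailment on the left. Here $v'(p)$ is not under our control, so I must argue that the side conjunct \emph{restricts} $v'(p)$ to the intended region: if $v'$ satisfies all premises at the order level (so $v'[\Gamma^{\uparrow!}]\leq v'(\triangle(p\rightarrow\chi))$ fails, say), then $v'({\sim}\triangle(p\rightarrow\invol p))$ must lie above $v'(\triangle(p\rightarrow\chi))$, and since both are crisp, the premise side conjunct equals $1$, pinning $v'(p)>\sfrac{1}{2}$; then $v'(\triangle(p\rightarrow\phi))=1$ gives $v'(\phi)\geq v'(p)>\sfrac{1}{2}$, so $v'(\phi)\in[x,1]$ after an application of the region-invariance from Theorem~\ref{theorem:6entailmentsinvG} to align $v'(p)$ with $x$. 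Conversely $v'(\triangle(p\rightarrow\chi))<1$ gives $v'(\chi)<v'(p)$, so $v'(\chi)\notin[v'(p),1]$, and rescaling via a monotone $\invol$-commuting $h$ (as in Theorems~\ref{theorem:validitystable01invol} and~\ref{theorem:6entailmentsinvG}) sends $v'(p)$ to $x$ while preserving all the relevant strict and non-strict comparisons, yielding the desired countermodel for $\models^{[x,1]}_{\invG}$. The $\sfrac{1}{2}$ and $[y,1]$ cases run identically, the only delicate point being in the $\sfrac{1}{2}$ case that $\triangle(p\leftrightarrow\invol p)$ forces $v'(p)$ to be \emph{exactly} $\sfrac{1}{2}$ rather than merely to a region, so no rescaling is needed there — but one must check that the inclusion $[\sfrac{1}{2},1]$ (closed at $\sfrac{1}{2}$) is correctly captured, which is exactly why the non-strict conjunct $\leftrightarrow$ rather than a strict comparison is used.
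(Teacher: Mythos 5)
Your proposal is correct and follows essentially the same route as the paper's proof: introduce the fresh threshold variable $p$, exploit the crisp $\{0,1\}$ values of the $\triangle$-guarded premises so that $\triangle(p\rightarrow\phi)=1$ encodes $v(\phi)\geq v(p)$, use the side formula to pin $v(p)$ into the correct region relative to $\sfrac{1}{2}$, and invoke Theorem~\ref{theorem:6entailmentsinvG} to identify $[v(p),1]$-entailment with $[x,1]$- (resp.\ $[y,1]$-) entailment within that region. The paper writes out only the $\models^{[y,1]}_{\invG}$ case and leaves the rest to analogy, exactly as your parallel treatment does.
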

\begin{proof}
Before we provide the formal proof, let us quickly explain the main idea behind the reduction. Note that we consider point-generated filters and they are \emph{principal}. Hence, ‘all values in $\Delta$ belong to $\Dmc$’ is equivalent to ‘the infimum of values in $\Delta$ belongs to $\Dmc$’. This means that we can just pick a~fresh variable $p$ and write a formula over it that is true iff $v(p)$ has a~specific value ($\sfrac{1}{2}$, $1$, less or greater than $\sfrac{1}{2}$, depending on the case). Then, we can add this formula to the original set of premises $\Gamma$ and use the value of $p$ as a~threshold by forcing all formulas in $\Gamma$ to have values at least as high as that of $p$ by writing $\triangle(p\rightarrow\phi)$ for every $\phi\in\Gamma$.\footnote{If $v(p)$ should be equal to $1$, it suffices to replace $\phi$ with $\triangle\phi$.} We then transform the conclusion $\chi$ into $\triangle(p\rightarrow\chi)$ which forces $\chi$ to also value greater or equal to~$p$.

We consider only the case of $\models^{[y,1]}_{\invG}$ as other relations can be tackled similarly. Let $\Gamma\not\models^{[y,1]}_{\invG}\chi$. Then, there is a~$\invG$-valuation $v$ s.t.\ $v[\Gamma]\geq y$ and $v(\chi)<y$. As all variables in $\Gamma^{\downarrow!}$ occur in scope of $\triangle$, it is clear that $v(\psi)\in\{0,1\}$ for every $\psi\in\Gamma^{\downarrow!}$. Since $p$ is fresh, we let $v(p)=y$. This gives us $v({\sim}\triangle(\invol p\rightarrow p))=1$ and $v(\psi)=1$ for every $\psi\in\Gamma^{\downarrow!}$ because $y<\sfrac{1}{2}$ and because $v[\Gamma]\geq y$. Moreover, $v(\triangle(p\rightarrow\chi))=0$ as $v(\chi)<y$. One can see now that $v$ witnesses $\Gamma^{\downarrow!}\not\models^\leq_{\invG}\triangle(p\rightarrow\chi)$ as well.

For the converse, let $v$ witness $\Gamma^{\downarrow!}\not\models^\leq_{\invG}\triangle(p\rightarrow\chi)$. Observe that $v(\psi)\in\{0,1\}$ for all $\psi\in\Gamma^{\downarrow!}$ because every formula has either $\triangle$ or $\sim$ as the main connective. From here, we get $v(\psi)=1$ for all $\psi\in\Gamma^{\downarrow!}$. This gives us $v({\sim}\triangle(\invol p\rightarrow p))=1$ which implies $v(p)<\sfrac{1}{2}$. In addition, $v(\triangle(p\rightarrow\chi))<1$ implies that $v(p)>v(\chi)$. Hence, there is a~filter $\Dmc=[v(p),1]$ s.t.\ $\Gamma\not\models^\Dmc_{\invG}\chi$. From here (recall again that $v(p)<\sfrac{1}{2}$), by Theorem~\ref{theorem:6entailmentsinvG}, we obtain that that $\Gamma\not\models^{[y,1]}_{\invG}\chi$, as required.
\end{proof}

Let us now establish the hierarchy of entailment relations in $\invG$. We show that the relations between entailments shown in Fig.~\ref{fig:invGhierarchy} are exact.
\begin{figure}
\centering
\begin{tikzpicture}[>=stealth,relative]
\node (order) at (0,.5) {\textcolor{red}{$\Gamma\models^\leq_{\invG}\chi$}};
\node (no0) at (5,1) {$\Gamma\models^{(0,1]}_{\invG}\chi$};
\node (small) at (2.5,-.5) {\textcolor{red}{$\Gamma\models^{(x,1]}_{\invG}\chi$}};
\node (1) at (5,0) {\textcolor{red}{$\Gamma\models^{1}_{\invG}\chi$}};
\node (almosthalf) at (5,-1) {$\Gamma\models^{(\sfrac{1}{2},1]}_{\invG}\chi$};
\node (half) at (5,2) {$\Gamma\models^{[\sfrac{1}{2},1]}_{\invG}\chi$};
\node (big) at (2.5,1.5) {$\Gamma\models^{(x',1]}_{\invG}\chi$};
\draw[double,->] (order) to (big);
\draw[double,->] (big) to (no0);
\draw[double,->] (big) to (half);
\draw[double,->] (order) to (small);
\draw[double,->] (small) to (1);
\draw[double,->] (small) to (almosthalf);
\end{tikzpicture}
\caption{Hierarchy of entailment relations in $\invG$. $\Gamma\cup\{\chi\}\subseteq\LinvG$; $1>x>\sfrac{1}{2}>x'>0$; arrows stand for ‘if\ldots, then\ldots’. Validity-preserving entailment relations are highlighted with \textcolor{red}{red}.}
\label{fig:invGhierarchy}
\end{figure}
\begin{theorem}\label{theorem:invGhierarchy}
The hierarchy of entailment relations in Fig.~\ref{fig:invGhierarchy} holds and is exact.
\end{theorem}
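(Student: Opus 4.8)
The proof has two halves: checking that each of the six drawn implications is valid, and checking exactness, i.e.\ that whenever there is no directed path from one relation to another, the implication fails. The recurring tool throughout is the order-automorphism technique of Theorems~\ref{theorem:validitystable01invol} and~\ref{theorem:6entailmentsinvG}: given a valuation $v$ and a strictly increasing bijection $h$ of $[0,1]$ with $h(1-y)=1-h(y)$ (so that $h$ fixes $0,\sfrac12,1$ and commutes with every connective and with $\invol$), the map $v'=h\circ v$ is again a $\invG$-valuation with $v'(\phi)=h(v(\phi))$ for all $\phi\in\LinvG$, and $h$ preserves infima so $v'[\Gamma]=h(v[\Gamma])$.

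For the valid implications, the two arrows leaving $\models^\leq_{\invG}$ are immediate: if $\Gamma\models^\leq_{\invG}\chi$ and $v[\Gamma]\in\Dmc$, then $v(\chi)\geq v[\Gamma]$, and since every filter is upward closed $v(\chi)\in\Dmc$; hence $\models^\leq_{\invG}$ is contained in \emph{every} filter-induced relation, which gives the arrows into $\models^{(x',1]}_{\invG}$ and $\models^{(x,1]}_{\invG}$ (and, by transitivity, paths to all the rest). For the four remaining arrows I would argue by contraposition, transporting a counterexample to the \emph{weaker} relation into one for the \emph{stronger} one by choosing $h$ to pin the relevant threshold. For example, to get $\models^{(x,1]}_{\invG}\subseteq\models^1_{\invG}$, from $v$ with $v[\Gamma]=1>v(\chi)$ I pick $h$ fixing $1$ with $h(v(\chi))\leq x$, so $v'[\Gamma]=1\in(x,1]$ while $v'(\chi)\leq x$; to get $\models^{(x',1]}_{\invG}\subseteq\models^{[\sfrac12,1]}_{\invG}$, from $v$ with $v[\Gamma]\geq\sfrac12>v(\chi)$ I pin $\sfrac12$ and push $v(\chi)$ below $x'$, so $v'[\Gamma]\geq\sfrac12>x'$ and $v'(\chi)\leq x'$. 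The arrows $\models^{(x,1]}_{\invG}\subseteq\models^{(\sfrac12,1]}_{\invG}$ and $\models^{(x',1]}_{\invG}\subseteq\models^{(0,1]}_{\invG}$ are entirely analogous.

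For exactness I would exhibit, for every pair not joined by a path, a concrete separating entailment; most of these are already at hand. From the proof of Theorem~\ref{theorem:6entailmentsinvG}, $p\wedge{\sim}\triangle p\models q$ holds \emph{only} in $\models^1_{\invG}$ and $q\models p\vee{\sim}p$ holds \emph{only} in $\models^{(0,1]}_{\invG}$, which separate these two relations from all others; $p\wedge\invol p\models q$ holds exactly when $\sfrac12\notin\Dmc$, separating $\models^1_{\invG},\models^{(x,1]}_{\invG},\models^{(\sfrac12,1]}_{\invG}$ from the three relations whose filters contain $\sfrac12$ and from $\models^\leq_{\invG}$; and \eqref{equ:6entailmentsinvG1}--\eqref{equ:6entailmentsinvG2} separate $\models^{[\sfrac12,1]}_{\invG}$ from $\models^{(x',1]}_{\invG}$. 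To these I add two empty-premise witnesses: $\emptyset\models p\vee\invol p$ holds exactly in the relations whose filter contains $\sfrac12$ (i.e.\ $\models^{(x',1]}_{\invG},\models^{[\sfrac12,1]}_{\invG},\models^{(0,1]}_{\invG}$) and so separates them from $\models^{(\sfrac12,1]}_{\invG},\models^{(x,1]}_{\invG},\models^1_{\invG},\models^\leq_{\invG}$; and $\emptyset\models\phi$, for the formula $\phi$ of Theorem~\ref{theorem:validitystable01invol} whose values fill $(\sfrac12,1]$, holds exactly when $(\sfrac12,1]\subseteq\Dmc$ and so separates the four lower-threshold relations from $\models^{(x,1]}_{\invG}$ and $\models^1_{\invG}$. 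A routine bookkeeping then confirms that these cover every non-path pair save one.

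The missing separation is $\models^{[\sfrac12,1]}_{\invG}\not\subseteq\models^{(0,1]}_{\invG}$, and this is the step I expect to be the main obstacle, since the two filters differ only in that $(0,1]$ designates \emph{all} positive values while $[\sfrac12,1]$ designates only those $\geq\sfrac12$. The trick is to build a premise whose infimum reaches $[\sfrac12,1]$ at a single point yet stays positive elsewhere: take $\Gamma=\{p\leftrightarrow\invol p\}$, so that $v(p\leftrightarrow\invol p)=1$ when $v(p)=\sfrac12$ and $v(p\leftrightarrow\invol p)=\min(v(p),1-v(p))<\sfrac12$ otherwise, and set $\chi=\triangle(p\leftrightarrow\invol p)$. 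Then $p\leftrightarrow\invol p\models^{[\sfrac12,1]}_{\invG}\triangle(p\leftrightarrow\invol p)$, because the premise lies in $[\sfrac12,1]$ only at $v(p)=\sfrac12$, where $v(\chi)=1$; whereas any $v$ with $0<v(p)<1$ and $v(p)\neq\sfrac12$ gives premise in $(0,1]$ but $v(\chi)=0$, witnessing $p\leftrightarrow\invol p\not\models^{(0,1]}_{\invG}\triangle(p\leftrightarrow\invol p)$. Finally, since each of the six drawn arrows is witnessed above as non-redundant, the reachability relation of Fig.~\ref{fig:invGhierarchy} coincides with the inclusion order of the entailments, which is precisely the claimed exactness.
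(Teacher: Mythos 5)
Your proposal is correct and follows essentially the same route as the paper: the six arrows come from upward-closure of filters plus transport of countermodels along order-preserving involutive rescalings of $[0,1]$, and exactness is witnessed by concrete entailments, most of them recycled from the proof of Theorem~\ref{theorem:6entailmentsinvG}. Even your ``main obstacle'' separation $\models^{[\sfrac{1}{2},1]}_{\invG}\not\subseteq\models^{(0,1]}_{\invG}$ uses in substance the paper's own witness (the paper takes $p\wedge\invol p\models^{[\sfrac{1}{2},1]}_{\invG}\triangle(p\leftrightarrow\invol p)$, which works for the same reason as your $p\leftrightarrow\invol p$ premise), and your remaining empty-premise witnesses are only cosmetic variants of the paper's $p\models^{\Dmc'}_{\invG}q\vee\invol q$ and ${\sim}\triangle((p\vee\invol p)\rightarrow(q\vee\invol q))\models^{(\sfrac{1}{2},1]}_{\invG}p\vee\invol p$.
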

\begin{proof}
First of all, it is clear that if $\Dmc$ is a~filter on $[0,1]$, then $\Gamma\models^\leq_{\invG}\chi$ implies $\Gamma\models^\Dmc_{\invG}\chi$. Let us establish that $\models^{(x,1]}_{\invG}$ and $\models^{(x',1]}_{\invG}$ are pairwise incomparable. From Theorem~\ref{theorem:6entailmentsinvG}, we know that $\Gamma\models^{(x',1]}_{\invG}\chi$ \emph{does not} imply $\Gamma\models^{(x,1]}_{\invG}\chi$. To see that $\Gamma\models^{(x,1]}_{\invG}\chi$ \emph{does not} imply $\Gamma\models^{(x',1]}_{\invG}\chi$, recall that $p\wedge\invol p\models^\Dmc_{\invG}q$ iff $\sfrac{1}{2}\notin\Dmc$.

Consider ‘$\Gamma\models^{(x',1]}_{\invG}\chi\Rightarrow\Gamma\models^{[\sfrac{1}{2},1]}_{\invG}\chi$’. Note that we \emph{do not} need to consider the converse implication since we know from Theorem~\ref{theorem:6entailmentsinvG} that $\models^{(x',1]}_{\invG}$ and $\models^{[\sfrac{1}{2},1]}_{\invG}$ are distinct. Assume that $\Gamma\not\models^{[\sfrac{1}{2},1]}_{\invG}\chi$. Then, there is a~$\invG$-valuation~$v$ s.t.\ $v[\Gamma]=y\geq\sfrac{1}{2}$ and $v(\chi)=y'<\sfrac{1}{2}$. Thus, $\Gamma\not\models^{[y',1]}_{\invG}\chi$. By Theorem~\ref{theorem:6entailmentsinvG}, we have that $\Gamma\not\models^{(x',1]}_{\invG}\chi$. For ‘$\Gamma\models^{(x',1]}_{\invG}\chi\Rightarrow\Gamma\models^{(0,1]}_{\invG}\chi$’, assume that $\Gamma\not\models^{(0,1]}_{\invG}\chi$. Then, there is a~valuation s.t.\ $v[\Gamma]=x>0$ and $v(\chi)=0$. Again, this means that $\Gamma\not\models^{(x',1]}_{\invG}\chi$ with $x'<\sfrac{1}{2}$. For the incomparability of $\models^{[\sfrac{1}{2},1]}_{\invG}$ and $\models^{(0,1]}_{\invG}$, we have by Theorem~\ref{theorem:6entailmentsinvG} that $\Gamma\models^{(0,1]}_{\invG}\chi$ \emph{does not} imply $\Gamma\models^{[\sfrac{1}{2},1]}_{\invG}\chi$. For the failure of the converse implication, observe that $p\wedge\invol p\models^{[\sfrac{1}{2},1]}_{\invG}\triangle(p\leftrightarrow\invol p)$ but $p\wedge\invol p\not\models^{(0,1]}_{\invG}\triangle(p\leftrightarrow\invol p)$.

Implications ‘$\Gamma\models^{(x,1]}_{\invG}\chi\Rightarrow\Gamma\models^1_{\invG}\chi$’ and ‘$\Gamma\models^{(x,1]}_{\invG}\chi\Rightarrow\Gamma\models^{(\sfrac{1}{2}]}_{\invG}\chi$’ can be proven in the same way. Recall from Theorem~\ref{theorem:6entailmentsinvG} that $\Gamma\models^1_{\invG}\chi$ \emph{does not} entail $\Gamma\models^{(\sfrac{1}{2}]}_{\invG}\chi$. For the converse, observe that (i) ${\sim}\triangle((p\vee\invol p)\rightarrow(q\vee\invol q))\models^{(\sfrac{1}{2},1]}_{\invG}p\vee\invol p$ but (ii) ${\sim}\triangle((p\vee\invol p)\rightarrow(q\vee\invol q))\not\models^1_{\invG}p\vee\invol p$. Indeed, $v(p\vee\invol p)\geq\sfrac{1}{2}$ for any $v$ and $v({\sim}\triangle((p\vee\invol p)\rightarrow(q\vee\invol q)))=1$\footnote{Note that $v({\sim}\triangle((p\vee\invol p)\rightarrow(q\vee\invol q)))\in\{0,1\}$ for any valuation.} iff $v(p\vee\invol p)>v(q\vee\invol q)$. Thus, (i) holds. However, (ii) fails if we set $v(p)=\sfrac{3}{4}$ and $v(q)=\sfrac{2}{3}$.

Finally, we have $p\wedge\invol p\models^\Dmc_{\invG}q$, $p\models^{\Dmc'}_{\invG}q\vee\invol q$, $p\wedge\invol p\not\models^{\Dmc'}_{\invG}q$, $p\not\models^\Dmc_{\invG}q\vee\invol q$ for $\Dmc\in\{\{1\},(\sfrac{1}{2},1]\}$ and $\Dmc'\in\{(0,1],[\sfrac{1}{2},1]\}$. The result follows.
\end{proof}

We finish the section by comparing our setting and that of~\cite{ConiglioEstevaGispertGodo2021}. To facilitate the discussion, we introduce two technical notions.
\begin{definition}[Matrix-based entailment relation]\label{def:matrixbasedentailment}
An entailment relation $\models^{\Dmc_\Mmsf}_{\invG}$ is \emph{matrix-based for $\Dmc$} if it is defined as follows:
\begin{align*}
\Gamma\models^{\Dmc_\Mmsf}_{\invG}\chi&\text{ iff }\forall v:[\forall\phi\in\Gamma~v(\phi)\in\Dmc]\Rightarrow v(\chi)\in\Dmc
\end{align*}
\end{definition}
\begin{definition}[Finitary entailment relation]\label{def:finitaryentailment}
An entailment relation $\models_\Emc$ is \emph{finitary} if it holds that
\begin{align*}
\Gamma\models_\Emc\chi&\text{ iff }\exists\Gamma'\subseteq\Gamma:|\Gamma'|<\aleph_0\text{ and }\Gamma'\models_\Emc\chi
\end{align*}
\end{definition}
\begin{figure}
\centering
\begin{tikzpicture}[>=stealth,relative]
\node (order) at (0,0) {$\Gamma\models^\leq_{\invG}\chi$};
\node (finbig) at (2,1) {$\Gamma\models^{[x,1]\fmsf_\Mmsf}_{\invG}\chi$};
\node (almosthalf) at (5,1) {$\Gamma\models^{(\sfrac{1}{2},1]_\Mmsf}_{\invG}\chi$};
\node (bigstrict) at (5,2) {$\Gamma\models^{(x,1]_\Mmsf}_{\invG}\chi$};
\node (big) at (8,2) {$\Gamma\models^{[x,1]_\Mmsf}_{\invG}\chi$};
\node (1) at (11,2) {$\Gamma\models^1_{\invG}\chi$};
\node (finsmall) at (2,-1) {$\Gamma\models^{[x',1]\fmsf_\Mmsf}_{\invG}\chi$};
\node (no0) at (5,-1) {$\Gamma\models^{(0,1]_\Mmsf}_{\invG}\chi$};
\node (smallstrict) at (5,-2) {$\Gamma\models^{(x',1]_\Mmsf}_{\invG}\chi$};
\node (small) at (8,-2) {$\Gamma\models^{[x',1]_\Mmsf}_{\invG}\chi$};
\node (half) at (11,-2) {$\Gamma\models^{[\sfrac{1}{2},1]_\Mmsf}_{\invG}\chi$};
\draw[double,->] (order) to (finbig);
\draw[double,->] (order) to (finsmall);
\draw[double,->] (finbig) to (almosthalf);
\draw[double,->] (finsmall) to (no0);
\draw[double,->,dashed] (finbig) to (bigstrict);
\draw[double,->,dashed] (bigstrict) to (big);
\draw[double,->] (big) to (1);
\draw[double,->,dashed] (finsmall) to (smallstrict);
\draw[double,->,dashed] (smallstrict) to (small);
\draw[double,->] (small) to (half);
\end{tikzpicture}
\caption{Hierarchy of \emph{matrix-based} entailment relations in~$\invG$ from~\cite[\S6.3]{ConiglioEstevaGispertGodo2021}. Arrows stand for ‘if\ldots, then\ldots’. Dashed arrows indicate that it is open whether the entailment relations are distinct, $1>x>\sfrac{1}{2}>x'>0$. Index $\fmsf$ in $\models^{\Dmc\fmsf}_{\invG}$ indicates that the entailment is finitary.}
\label{fig:existingbound}
\end{figure}

In~\cite[\S6.3]{ConiglioEstevaGispertGodo2021}, an upper bound on the number of \emph{matrix-based} entailment relations in $\invG$ was provided. Namely, it was shown that there are at most $11$ of those (cf.~Fig.~\ref{fig:existingbound}).\footnote{The original scheme in~\cite{ConiglioEstevaGispertGodo2021} contains a~typo: $\models^{(\sfrac{1}{2},1]}_{\invG}$ and $\models^{[\sfrac{1}{2},1]}_{\invG}$ are swapped. The scheme we present here corresponds to the relations shown in~\cite[Proposition~6.1]{ConiglioEstevaGispertGodo2021}.} On the other hand, entailment relations w.r.t.\ a~given filter as in Definition~\ref{def:invGsemantics} \emph{are not matrix-based} in general. Indeed, if a~filter is not point-generated, then ‘the values of all premises are in the filter’ and ‘the \emph{infimum} of the values of all premises is in the filter’ are not necessarily equivalent.

In the next statement, we show that all \emph{matrix-based} filter-induced entailment relations in $\invG$ are reducible to $\models^\leq_{\invG}$ and $\models^1_{\invG}$ (and thus finitary). This gives a~positive answer to an open question from~\cite[\S6.3]{ConiglioEstevaGispertGodo2021}: \emph{whether all matrix-based entailment relations induced by filters on $[0,1]$ are finitary}. In other words, all dashed arrows from Fig.~\ref{fig:existingbound} collapse resulting in the same hierarchy as in Fig.~\ref{fig:invGhierarchy}.
\begin{theorem}\label{theorem:allmatrixbasedarefinitary}
All matrix-based entailment relations induced by filters on $[0,1]$ are finitary and reducible to~$\models^\leq_{\invG}$ and $\models^1_{\invG}$.
\end{theorem}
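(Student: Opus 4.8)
The plan is to reduce every matrix-based filter-induced relation $\models^{\Dmc_\Mmsf}_{\invG}$ to an order-entailment $\models^\leq_{\invG}$ whose premises and conclusion are all \emph{Boolean-valued} (evaluated only at $0$ or $1$), and then to read off finitarity from the finitary assertional companion $\models^1_{\invG}$. First I would classify the filters on $[0,1]$ exactly as in the proof of Theorem~\ref{theorem:validitystable01}: each is $\{1\}$, or $[d,1]$ or $(d,1]$ for some $d\in[0,1)$. For the \emph{definable} thresholds the encoding is immediate: $\sim\sim\phi$ is Boolean and equals $1$ iff $v(\phi)>0$, the formula $\sim\triangle(\phi\rightarrow\invol\phi)$ equals $1$ iff $v(\phi)>\sfrac{1}{2}$, and $\triangle(\invol\phi\rightarrow\phi)$ equals $1$ iff $v(\phi)\geq\sfrac{1}{2}$; hence $\models^{(0,1]_\Mmsf}_{\invG}$, $\models^{(\sfrac{1}{2},1]_\Mmsf}_{\invG}$ and $\models^{[\sfrac{1}{2},1]_\Mmsf}_{\invG}$ translate into $\models^\leq_{\invG}$ by replacing each $\phi$ and $\chi$ with the corresponding Boolean formula.

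For the non-definable thresholds $x>\sfrac{1}{2}$ and $x'<\sfrac{1}{2}$ I would reuse the fresh-variable device of Theorem~\ref{theorem:entailmentreductionsinvG}: a fresh $p$ plays the role of the (unnameable) threshold, its position relative to $\sfrac{1}{2}$ being pinned down by $\sim\triangle(p\rightarrow\invol p)$ (forcing $v(p)>\sfrac{1}{2}$) or $\sim\triangle(\invol p\rightarrow p)$ (forcing $v(p)<\sfrac{1}{2}$). The \emph{closed} filters $[x,1]$, $[x',1]$ are encoded by $\triangle(p\rightarrow\phi)$ (``$v(\phi)\geq v(p)$''), exactly as in $\Gamma^{\uparrow!}$, $\Gamma^{\downarrow!}$; the \emph{open} filters $(x,1]$, $(x',1]$ are encoded instead by the strict test $\sim\triangle(\phi\rightarrow p)$ (``$v(\phi)>v(p)$''). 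In every case the resulting premise set and conclusion are Boolean, and the translation works verbatim for \emph{infinite} $\Gamma$, since each premise is translated individually and the constraints are imposed uniformly through the single variable $p$; rescaling invariance (Theorem~\ref{theorem:6entailmentsinvG}, items $\mathbf{A}$--$\mathbf{D}$) guarantees that quantifying over all admissible values of $v(p)$ yields precisely the single-threshold relation. This already establishes reducibility of all matrix-based relations to $\models^\leq_{\invG}$ (and hence to $\models^\leq_{\invG}$ and $\models^1_{\invG}$).

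Finitarity then follows from a single observation: when $\Delta\cup\{\psi\}$ consists of Boolean-valued formulas, $\Delta\models^\leq_{\invG}\psi$ holds iff every valuation making all of $\Delta$ equal to $1$ also makes $\psi$ equal to $1$ --- that is, it collapses to the assertional consequence $\models^1_{\invG}$. Since $\invG$ is finitely Hilbert-axiomatisable and complete for $\models^1_{\invG}$, the relation $\models^1_{\invG}$ is finitary; therefore each translated entailment is witnessed by a finite subset of $\Delta$, which pulls back to a finite subset of the original $\Gamma$. As the translations act premise-by-premise, finitarity transfers to every $\models^{\Dmc_\Mmsf}_{\invG}$, and $\models^1_{\invG}$ itself is finitary directly. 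Consequently the finitary companions coincide with the full matrix relations, and --- using that on \emph{finite} premise sets the matrix-based, infimum-based, open- and closed-filter relations all agree (again via Theorem~\ref{theorem:6entailmentsinvG}, $\mathbf{B}$ and $\mathbf{D}$) --- the three nodes joined by each pair of dashed arrows in Fig.~\ref{fig:existingbound} collapse, leaving exactly the six relations of Fig.~\ref{fig:invGhierarchy}.

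The step I expect to be the main obstacle is justifying the reduction \emph{for infinite $\Gamma$}: proving that the Boolean translation captures the matrix condition ``every premise lies in $\Dmc$'' and not merely ``the infimum lies in $\Dmc$''. For open filters this is precisely where matrix-based and infimum-based relations genuinely diverge on infinite sets, so the encoding must use $\sim\triangle(\phi\rightarrow p)$ rather than $\triangle(p\rightarrow\phi)$, and one has to check that a valuation falsifying the translated $\models^\leq_{\invG}$-entailment assigns \emph{every} $\phi\in\Gamma$ a value strictly above the threshold while keeping $\chi$ below it --- the induction $v'(\psi)=h(v(\psi))$ of Theorems~\ref{theorem:validitystable01invol}--\ref{theorem:entailmentreductionsinvG} must be revisited with strict inequalities. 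A secondary subtlety is the finitarity of $\models^1_{\invG}$; if a self-contained argument is preferred, it can be extracted from the structural fact that every value $v(\phi)$ lies in the finite set $\{0,\sfrac{1}{2},1\}\cup\{v(q),1-v(q)\mid q\in\Prop(\phi)\}$ (an easy induction, using that $\wedge,\vee,\rightarrow,\coimplies$ return an argument, $0$ or $1$, that $\sim,\triangle$ return $0$ or $1$, and that this set is closed under $\invol$), which yields completeness with respect to finite symmetric Gödel chains and thence finitarity of the assertional consequence.
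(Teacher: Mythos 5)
Your proposal is correct in substance but reaches the conclusion by a genuinely different route. The paper never constructs reductions for the open-filter matrix relations $\models^{(x,1]_\Mmsf}_{\invG}$ and $\models^{(x',1]_\Mmsf}_{\invG}$ at all: it observes that for a \emph{principal} filter $[x,1]$ the matrix-based and infimum-based relations coincide, applies the fresh-variable reduction of Theorem~\ref{theorem:entailmentreductionsinvG} to get reducibility of $\models^{[x,1]_\Mmsf}_{\invG}$ to $\models^\leq_{\invG}$ and (since the translation is Boolean-valued) to $\models^1_{\invG}$, infers finitarity from the finitarity of $\models^1_{\invG}$, and then lets the sandwich $\models^{[x,1]\fmsf_\Mmsf}_{\invG}\Rightarrow\models^{(x,1]_\Mmsf}_{\invG}\Rightarrow\models^{[x,1]_\Mmsf}_{\invG}$ of \cite[Proposition~6.1]{ConiglioEstevaGispertGodo2021} (Fig.~\ref{fig:existingbound}) collapse, which handles the open filters for free. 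You instead build explicit Boolean translations for \emph{every} filter type, including the strict test ${\sim}\triangle(\phi\rightarrow p)$ for the non-definable open thresholds and the direct encodings ${\sim}{\sim}\phi$, ${\sim}\triangle(\phi\rightarrow\invol\phi)$, $\triangle(\invol\phi\rightarrow\phi)$ for the definable ones. The shared engine is identical (principal filters, Boolean translations, collapse of $\models^\leq_{\invG}$ on Boolean formulas to $\models^1_{\invG}$, finitarity of the latter, premise-by-premise pull-back of finite witnessing subsets); what your version buys is independence from the external hierarchy of \cite{ConiglioEstevaGispertGodo2021} and explicit reductions for the open-filter relations, at the price of extra verification work that the paper deliberately avoids.

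Three small repairs are needed. First, you invoke items $\mathbf{A}$--$\mathbf{D}$ of Theorem~\ref{theorem:6entailmentsinvG}, which are stated for the infimum-based relations; your argument needs their matrix-based analogues (e.g.\ that all $\models^{(x,1]_\Mmsf}_{\invG}$ with $x>\sfrac{1}{2}$ coincide). The same order-preserving $h$ of Theorem~\ref{theorem:validitystable01invol} does the job --- in fact more cleanly, since ``$v(\phi)>c$ for every $\phi\in\Gamma$'' transfers under a strictly order-preserving $h$ without any concern about where the infimum lands --- but this has to be stated and checked. Second, your open-filter translation breaks in the degenerate case $\Gamma=\varnothing$: nothing then forces $v(p)<1$, so ${\sim}\triangle(\chi\rightarrow p)$ is falsified by setting $v(p)=1$ even when $\chi$ is valid; adding $\mathbf{1}$ to $\Gamma$ (which changes neither the matrix entailment nor the nonempty case) repairs this. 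Third, your back-up argument for the finitarity of $\models^1_{\invG}$ via finite chains does not suffice as stated --- completeness with respect to finite chains does not by itself yield compactness for infinite premise sets --- though the paper likewise simply asserts this finitarity, so you are no worse off relying on the strong completeness of the Hilbert calculus.
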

\begin{proof}
Recall from~\cite[\S6.3]{ConiglioEstevaGispertGodo2021} that it suffices to check that matrix-based entailment relations induced by filters of the form $[x,1]$, $[x',1]$, $(x,1]$, and $(x',1]$ (with $x>\sfrac{1}{2}>x'$) are finitary and reducible to $\models^\leq_{\invG}$ and~$\models^1_{\invG}$. From~\cite[Proposition~6.1]{ConiglioEstevaGispertGodo2021} (cf.~Fig.~\ref{fig:existingbound}), it is evident that it suffices to prove that $\models^{[x,1]_\Mmsf}_{\invG}$ and $\models^{[x',1]_\Mmsf}_{\invG}$ are finitary.

It follows immediately from Definitions~\ref{def:invGsemantics} and~\ref{def:matrixbasedentailment} that $\models^{\Dmc_\Mmsf}_{\invG}=\models^\Dmc_{\invG}$ when $\Dmc$ is point-generated. Hence, $\models^{[x,1]_\Mmsf}_{\invG}$ and $\models^{[x',1]_\Mmsf}_{\invG}$ are reducible to $\models^\leq_{\invG}$ by Theorem~\ref{theorem:entailmentreductionsinvG}. Moreover, observe that the reduction utilises only formulas of the form $\triangle\phi$ or ${\sim}\triangle\phi$ which have values in $\{0,1\}$. Thus we can reduce $\models^{[x,1]_\Mmsf}_{\invG}$ and $\models^{[x',1]_\Mmsf}_{\invG}$ to $\models^1_{\invG}$ in the same way. Thus, as $\models^1_{\invG}$ is finitary, $\models^{[x,1]_\Mmsf}_{\invG}$ and $\models^{[x',1]_\Mmsf}_{\invG}$ are finitary as well, i.e., $\models^{[x,1]_\Mmsf}_{\invG}=\models^{[x,1]\fmsf_\Mmsf}_{\invG}$ and $\models^{[x',1]_\Mmsf}_{\invG}=\models^{[x',1]\fmsf_\Mmsf}_{\invG}$. The result follows.
\end{proof}

Finally, we make the following remark. Theorems~\ref{theorem:6entailmentsinvG} and~\ref{theorem:allmatrixbasedarefinitary} entail that all filter-generated entailments coincide with their matrix counterpart with only two exceptions: $\models^{(0,1]}_{\invG}$ and $\models^{(\sfrac{1}{2},1]}_{\invG}$. These exceptions are not accidental as all other entailment relations are induced by \emph{point-generated}, i.e., \emph{principal} filters (cf.\ the proof of Theorem~\ref{theorem:6entailmentsinvG}). We conjecture that these two entailment relations will also coincide with $\models^{(0,1]_\Mmsf}_{\invG}$ and $\models^{(\sfrac{1}{2},1]_\Mmsf}_{\invG}$, respectively. A~tentative argument would be roughly as follows: while it is possible to construct an infinite set of formulas $\{\theta_1,\ldots,\theta_n,\ldots\}$ s.t.\ $0<v(\theta_i)<\frac{1}{2}$ (or, respectively, $\frac{1}{2}<v(\theta_i)<1$) and $v(\theta_i)>v(\theta_{i+1})$, there does not seem to exist an $\LinvG$-formula that could express the infimum of values of an \emph{infinite} set of formulas and differentiate between a~principal and a~non-principal filter. We leave the detailed investigation of this question for future research.
\section{Paraconsistent G\"{o}del logic with strong negation\label{sec:Gsquare}}
Let us now consider the language and semantics of $\Gsquareorder$ --- an expansion of $\biG$ with $\neg$.
\begin{align*}
\LGsquareorder\ni\phi&\coloneqq p\in\Prop\mid\neg\phi\mid{\sim}\phi\mid\triangle\phi\mid(\phi\wedge\phi)\mid(\phi\vee\phi)\mid(\phi\rightarrow\phi)\mid(\phi\coimplies\phi)
\end{align*}
\begin{definition}[Semantics of $\Gsquareorder$]\label{def:Gsquare}
\emph{$\Gsquare$-valuations} are maps $v_1,v_2:\Prop\rightarrow[0,1]$ that are extended to the complex formulas as follows (recall Definition~\ref{def:biGalgebra} for the operations of the bi-G\"{o}del algebra).
\begin{align*}
v_1(\neg\phi)&=v_2(\phi)&v_2(\neg\phi)&=v_1(\phi)\\
v_1(\triangle\phi)&=\triangle_\Gmsf v_1(\phi)&v_2(\triangle\phi)&={\sim_\Gmsf}{\sim_\Gmsf}v_2(\phi)\\
v_1({\sim}\phi)&={\sim_\Gmsf}v_1(\phi)&v_2({\sim}\phi)&=1\coimplies_\Gmsf v_2(\phi)\\
v_1(\phi\wedge\chi)&=v_1(\phi)\wedge_\Gmsf v_1(\chi)&v_2(\phi\wedge\chi)&=v_2(\phi)\vee_\Gmsf v_2(\chi)\\
v_1(\phi\vee\chi)&=v_1(\phi)\vee_\Gmsf v_1(\chi)&v_2(\phi\vee\chi)&=v_2(\phi)\wedge_\Gmsf v_2(\chi)\\
v_1(\phi\rightarrow\chi)&=v_1(\phi)\!\rightarrow_\Gmsf\!v_1(\chi)&v_2(\phi\rightarrow\chi)&=v_2(\chi)\coimplies_\Gmsf v_2(\phi)\\
v_1(\phi\coimplies\chi)&=v_1(\phi)\coimplies_\Gmsf v_1(\chi)&v_2(\phi\coimplies\chi)&=v_2(\chi)\!\rightarrow_\Gmsf\!v_2(\phi)
\end{align*}

We say that $\phi\in\LGsquareorder$ is \emph{$\Gsquareorder$-valid} iff $v_1(\phi)=1$ and $v_2(\phi)=0$ for every $\Gsquare$-valuations $v_1$ and $v_2$.
\end{definition}
\begin{convention}
When it does not create confusion, we will write $v(\phi)=\langle x,y\rangle$ to designate that $v_1(\phi)=x$ and $v_2(\phi)=y$. We will also use $v$ as a~shorthand for the pair of valuations $\langle v_1,v_2\rangle$ and call it \emph{valuation} since a~pair of valuations on $[0,1]$ corresponds to one valuation on $[0,1]^{\Join}$. In what follows, we will say that $v_1$~expresses \emph{positive support} or \emph{support of truth} and $v_2$~\emph{negative support} or \emph{support of falsity} and use $\Gsquare$ to stand for $\Gsquareorder$ when it does not lead to confusion.
\end{convention}

As one can see from the semantics of $\Gsquare$, support of truth in $\Gsquare$ coincides with the semantics of $\biG$ (cf.~Definition~\ref{def:biGalgebra}). One can show (cf.~\cite[Corollary~1]{BilkovaFrittellaKozhemiachenko2021TABLEAUX}) that $\phi\in\LGsquareorder$ is valid iff $v_1(\phi)=1$ for every $\Gsquare$-valuation. This means that $\phi\in\LbiG$ is $\biG$-valid iff it is $\Gsquareorder$-valid.

Notice, however, that $\triangle$ in $\Gsquareorder$ does not ‘detect’ $\langle1,0\rangle$: $v(\triangle\phi)=\langle1,1\rangle$ if $v(\phi)=\langle1,1\rangle$. We can, however, define another connective~--- $\triangletop$~--- that plays this role (in~\cite{BilkovaFrittellaKozhemiachenkoMajer2023IJAR} this connective was denoted $\triangle^\mathbf{1}$): $\triangletop\phi\coloneqq\triangle\phi\wedge{\sim}\neg\phi$. Indeed, one can check that
\begin{align}
v(\triangletop\phi)&=\begin{cases}\langle1,0\rangle&\text{if }v(\phi)=\langle1,0\rangle\\\langle0,1\rangle&\text{otherwise}\end{cases}\label{equ:triangletop}
\end{align}

Note that similarly to $\invG$ (recall~\eqref{equ:invGNNF}), there are negation normal forms w.r.t.\ $\neg$ in $\Gsquareorder$. Namely, one can obtain the negation normal form of $\psi\in\LGsquareorder$ by applying the following transformations to its subformulas.
\begin{align}
\neg\neg\phi&\rightsquigarrow\phi&\neg(\phi\wedge\chi)&\rightsquigarrow\neg\phi\vee\neg\chi&\neg(\phi\vee\chi)&\rightsquigarrow\neg\phi\wedge\neg\chi\label{equ:GsquareNNF}\\
&&\neg(\phi\rightarrow\chi)&\rightsquigarrow\neg\chi\coimplies\neg\phi&\neg(\phi\coimplies\chi)&\rightsquigarrow\neg\chi\rightarrow\neg\phi\nonumber
\end{align}

On the other hand, there are also several important differences between $\invol$ and $\neg$. First of all, observe that although $(p\wedge\invol p)\rightarrow(q\vee\invol q)$\footnote{This formula is sometimes~\cite{Dunn2010,Shramko2021} called ‘safety principle’.} is $\invG$-valid, $(p\wedge\neg p)\rightarrow(q\vee\neg q)$ is not valid in $\Gsquareorder$. Second, $p\vee\invol p$ cannot have value $0$, whence ${\sim\sim}(p\vee\invol p)$ is $\invG$-valid. This is not the case with $\neg$: ${\sim\sim}(p\vee\neg p)$ is not $\Gsquareorder$-valid (to see that, set $v(p)=\langle0,0\rangle$).

Another logic closely related to $\Gsquare$ is Nelson's logic $\nfour$ --- the paraconsistent propositional fragment of the logic introduced in~\cite{Nelson1949}. Its algebraic semantics is built upon so-called \emph{twist-structures} (cf.~\cite{Odintsov2008,Vakarelov1977} for a~detailed presentation). Namely, given an implicative lattice $\Lmc=\langle\Lmsf,\otimes,\oplus,\Rightarrow\rangle$, the twist structure $\Lmc^{\Join}=\langle\Lmsf\times\Lmsf,-,\sqcap,\sqcup,\sqsupset\rangle$ has the operations defined as follows
\begin{align*}
-\langle a_1,a_2\rangle&=\langle a_2,a_1\rangle&\langle a_1,a_2\rangle\sqcap\langle b_1,b_2\rangle&=\langle a_1\otimes b_1,a_2\oplus b_2\rangle\\
\langle a_1,a_2\rangle\sqcup\langle b_1,b_2\rangle&=\langle a_1\oplus b_1,a_2\otimes b_2\rangle&\langle a_1,a_2\rangle\sqsupset\langle b_1,b_2\rangle&=\langle a_1\Rightarrow b_1,a_1\otimes b_2\rangle
\end{align*}

Clearly, $[0,1]^{\Join}$ can be thought of as a~twist-structure. The only difference is that the second coordinate of the $\Gsquareorder$ implication is defined via \emph{co-implication}, not conjunction. Recall from~\eqref{equ:Gsquaredualisations}, however, that it is possible to redefine the implication in $\Gsquareorder$ in a~‘Nelsonian’ way which results in the logic~$\GsquareNelson$, which is the linear extension of $\nfour$ expanded with a~co-implication (cf.~\cite{Ferguson2014,BilkovaFrittellaKozhemiachenko2021TABLEAUX,BilkovaFrittellaKozhemiachenkoMajer2023IJAR} for details).
\section[Entailments in $\Gsquareorder$]{Entailment relations in $\Gsquareorder$ and filters on $[0,1]^{\Join}$\label{sec:FIE}}
Let us now define entailment relations in $\Gsquareorder$. We will first recall the standard notion of entailment as it was given in~\cite{BilkovaFrittellaKozhemiachenkoMajer2023IJAR} and then introduce entailment relations induced by filters on $[0,1]^{\Join}$ in a~similar fashion to how we considered entailment relations in $\biG$ and $\invG$ induced by filters on $[0,1]$ (cf.~Definition~\ref{def:pointgeneratedfilter01}).
\begin{definition}[Order-etailment in $\Gsquare$]\label{def:Gsquareentailment}
Let $\Gamma\cup\{\phi\}\subseteq\LGsquareorder$. We say that $\Gamma$ \emph{entails $\phi$ w.r.t.\ order on $[0,1]^{\Join}$} ($\Gamma\models^\leq_{\Gsquareorder}\!\!\phi$) iff $\inf\{v_1(\psi)\mid\psi\in\Gamma\}\leq v_1(\phi)$ and $\sup\{v_2(\psi)\mid\psi\!\in\!\Gamma\}\geq v_2(\phi)$ for every $\Gsquare$-valuations $v_1$ and~$v_2$.
\end{definition}

Before defining filter-induced entailment relations in $\Gsquareorder$, let us make a~brief remark on filters on $[0,1]$. One can see that filters on $[0,1]$ are, in fact, intervals of the form $[x,1]$ or $(x',1]$ with $x\in(0,1]$ and $x'\in[0,1]$. Note that $[0,1]^{\Join}$ is a~\emph{bi-lattice}, so there can be filters w.r.t.\ either of its orders. Since the basic entailment is defined w.r.t.\ truth (upwards) order on $[0,1]^{\Join}$, we will also consider filters w.r.t.\ truth order. Geometrically, we can represent such filters as rectangles with the diagonal from $\langle x,y\rangle$ to $\langle1,0\rangle$. The next definition presents point-generated filters on $[0,1]^{\Join}$ w.r.t.\ the truth (upwards) order.
\begin{definition}[Point-generated filters on ${[0,1]^{\Join}}$]\label{def:pointgeneratedfilter01join}
Let $\langle x,y\rangle\in[0,1]^{\Join}$ and $\langle x,y\rangle\neq\langle0,1\rangle$. We define a~\emph{filter generated by $\langle x,y\rangle$} as follows:
\begin{align*}
\langle x,y\rangle^\uparrow&=\{\langle x',y'\rangle\mid x'\geq x\text{ and }y'\leq y\}
\end{align*}
\end{definition}
\begin{convention}
We will further use the term \emph{one-dimensional filter} to designate a~filter $\Dmc\neq\{\langle1,0\rangle\}$ s.t.\ there is a~filter $\Dmc'\supseteq\Dmc$ with $\Dmc'=\langle1,y\rangle^\uparrow$ or $\Dmc'=\langle x,0\rangle^\uparrow$ (for $1\geq x,y\geq0$). $\Dmc=\{\langle1,0\rangle\}$ is called \emph{zero-dimensional}. Other filters on $[0,1]^{\Join}$ are called \emph{two-dimensional}.
\end{convention}
Fig.~\ref{fig:01joinfiltersexample} illustrates a~(two-dimensional) point-generated filters from the above definition. Note, however, that \emph{not all} filters are point-generated or even contained in them. E.g., $(0,1]$ is not contained in any point-generated filter on $[0,1]$ (and is, in fact, the only such filter on $[0,1]$). Let us now introduce non-point-generated filters.
\begin{remark}[Non-point-generated filters on ${[0,1]^{\Join}}$]\label{rem:nonpointgeneratedfilters01Join}
It is important to note that while $(x,1]$ is a~filter on $[0,1]$, albeit, a~\emph{nonprincipal} one, $\langle x,y\rangle^{\sharp\uparrow}=\langle x,y\rangle^\uparrow\setminus\{\langle x,y\rangle\}$ \emph{is usually not a~filter on $[0,1]^{\Join}$ at all}. The only exceptions from this are filters $\langle x,0\rangle^{\sharp\uparrow}$ and $\langle1,y\rangle^{\sharp,\uparrow}$. Indeed, observe that if $x<1$ and $y>0$, then $\langle x,0\rangle\in\langle x,y\rangle^{\sharp\uparrow}$ and $\langle1,y\rangle\in\langle x,y\rangle^{\sharp\uparrow}$. But the greatest lower bound of these two elements is $\langle x,y\rangle$.

Still, there are two-dimensional non-point-generated filters on $[0,1]^{\Join}$. We define them as follows (below, $x,y\in[0,1]$):
\begin{align*}
\langle x^\circ,y^\circ\rangle^\uparrow&=\langle x,y\rangle^\uparrow\setminus\{\langle x',y'\rangle\mid x'=x\text{ or }y'=y\}\\
\langle x^\circ,y^\bullet\rangle^\uparrow&=\langle x,y\rangle^\uparrow\setminus\{\langle x',y'\rangle\mid x'=x\}\\
\langle x^\bullet,y^\circ\rangle^\uparrow&=\langle x,y\rangle^\uparrow\setminus\{\langle x',y'\rangle\mid y'=y\}
\end{align*}
The filters\footnote{We can also, of course, add $\langle x^\bullet,y^\bullet\rangle^\uparrow$ as an alternative notation for $\langle x,y\rangle^\uparrow$. We will not, however, use this notation to avoid confusion.} can be seen in Fig.~\ref{fig:nonpointgeneratedfilters}.

Note that there are several two-dimensional counterparts of $(0,1]$ that \emph{are not contained in any point-generated filter} on $[0,1]^{\Join}$. Namely: $\langle0^\circ,1^\circ\rangle^\uparrow$, $\langle0^\bullet,1^\circ\rangle^\uparrow$, and $\langle0^\circ,1^\bullet\rangle^\uparrow$.
\end{remark}
\begin{figure}
\centering
\begin{tikzpicture}[>=stealth,relative]
\node (U1) at (0,-2) {$\langle0,1\rangle$};
\node (U2) at (-2,0) {$\langle0,0\rangle$};
\node (U3) at (2,0) {$\langle1,1\rangle$};
\node (U4) at (0,2) {$\langle1,0\rangle$};
\node (U5) at (0.2,0.6) {$\bullet$};
\node (U6) at (0.2,0.4) {$\langle x,y\rangle$};
\path[-,draw] (U1) to (U2);
\path[-,draw] (U1) to (U3);
\path[-,draw] (U2) to (U4);
\path[-,draw] (U3) to (U4);
\draw[dashed] (U5) -- (0.8,1.2);
\draw[dashed] (U5) -- (-0.6,1.4);
\end{tikzpicture}
\caption{Filter $\langle x,y\rangle^\uparrow$ on $[0,1]^{\Join}$.}
\label{fig:01joinfiltersexample}
\end{figure}
\begin{figure}
\centering
\begin{tikzpicture}[>=stealth,relative]
\node (U1) at (0,-2) {$\langle0,1\rangle$};
\node (U2) at (-2,0) {$\langle0,0\rangle$};
\node (U3) at (2,0) {$\langle1,1\rangle$};
\node (U4) at (0,2) {$\langle1,0\rangle$};
\node (U5) at (0.2,0.6) {$\circ$};
\node (U6) at (0.2,0.4) {$\langle x,y\rangle$};
\path[-,draw] (U1) to (U2);
\path[-,draw] (U1) to (U3);
\path[-,draw] (U2) to (U4);
\draw (U3) to (U4);
\draw decorate [decoration=crosses] {(U5) -- (-0.6,1.4)};
\draw decorate [decoration=crosses] {(U5) -- (0.8,1.2)};
\draw[fill=white] (0.8,1.2) circle (1.5pt);
\draw[fill=white] (-0.6,1.4) circle (1.5pt);
\end{tikzpicture}
\hfil
\begin{tikzpicture}[>=stealth,relative]
\node (U1) at (0,-2) {$\langle0,1\rangle$};
\node (U2) at (-2,0) {$\langle0,0\rangle$};
\node (U3) at (2,0) {$\langle1,1\rangle$};
\node (U4) at (0,2) {$\langle1,0\rangle$};
\node (U5) at (0.2,0.6) {$\circ$};
\node (U6) at (0.2,0.4) {$\langle x,y\rangle$};
\path[-,draw] (U1) to (U2);
\path[-,draw] (U1) to (U3);
\path[-,draw] (U2) to (U4);
\draw (U3) to (U4);
\draw decorate [decoration=crosses] {(U5) -- (-0.6,1.4)};
\draw [dashed] {(U5) -- (0.8,1.2)};
\draw[fill=black] (0.8,1.2) circle (1.5pt);
\draw[fill=white] (-0.6,1.4) circle (1.5pt);
\end{tikzpicture}\\
\begin{tikzpicture}[>=stealth,relative]
\node (U1) at (0,-2) {$\langle0,1\rangle$};
\node (U2) at (-2,0) {$\langle0,0\rangle$};
\node (U3) at (2,0) {$\langle1,1\rangle$};
\node (U4) at (0,2) {$\langle1,0\rangle$};
\node (U5) at (0.2,0.6) {$\circ$};
\node (U6) at (0.2,0.4) {$\langle x,y\rangle$};
\path[-,draw] (U1) to (U2);
\path[-,draw] (U1) to (U3);
\path[-,draw] (U2) to (U4);
\draw (U3) to (U4);
\draw decorate [decoration=crosses] {(U5) -- (0.8,1.2)};
\draw [dashed] {(U5) -- (-0.6,1.4)};
\draw[fill=white] (0.8,1.2) circle (1.5pt);
\draw[fill=black] (-0.6,1.4) circle (1.5pt);
\end{tikzpicture}
\hfil
\begin{tikzpicture}[>=stealth,relative]
\node (U1) at (0,-2) {$\langle0,1\rangle$};
\node (U2) at (-2,0) {$\langle0,0\rangle$};
\node (U3) at (2,0) {$\langle1,1\rangle$};
\node (U4) at (0,2) {$\langle1,0\rangle$};
\node (U6) at (0.6,.8) {$\langle 1,y\rangle$};
\path[-,draw] (U1) to (U2);
\path[-,draw] (U1) to (U3);
\path[-,draw] (U2) to (U4);
\draw (U3) to (U4);
\draw[fill=white] (0.8,1.2) circle (1.5pt);
\draw[dashed] (0.8,1.2) to (1.2,1.6);
\draw[dashed] (1.2,1.6) to (.4,2.4);
\draw[dashed] (.4,2.4) to (U4);
\end{tikzpicture}
\caption{Filters from Remark~\ref{rem:nonpointgeneratedfilters01Join}. Left to right and top to bottom: $\langle x^\circ,y^\circ\rangle^\uparrow$, $\langle x^\circ,y^\bullet\rangle^\uparrow$, $\langle x^\bullet,y^\circ\rangle^\uparrow$, and $\langle 1,y\rangle^{\sharp\uparrow}$.}
\label{fig:nonpointgeneratedfilters}
\end{figure}

We are now ready to define entailment relations induced by filters on $[0,1]^{\Join}$.
\begin{convention}
Let $v_1$ and $v_2$ be $\Gsquare$-valuations, $v=\langle v_1,v_2\rangle$, $\Gamma\subseteq\LGsquareorder$, and $\Dmc$ a~filter on $[0,1]^{\Join}$. We will use the following notation:
\begin{itemize}
\item $v_1[\Gamma]=x$ stands for $\inf\{v_1(\phi)\mid\phi\in\Gamma\}=x$, $v_2[\Gamma]=y$ stands for $\sup\{v_2(\phi)\mid\phi\in\Gamma\}=y$;
\item $v[\Gamma]=\langle x,y\rangle$ stands for $\langle v_1[\Gamma],v_2[\Gamma]\rangle=\langle x,y\rangle$;
\item $v[\Gamma]\in\Dmc$ stands for $\left\langle v_1[\Gamma],v_2[\Gamma]\right\rangle\in\Dmc$.
\end{itemize}
\end{convention}
\begin{definition}[Entailment relation induced by a~filter or ${[0,1]^{\Join}}$]\label{def:01joinFIE}
Let $\Dmc$ be a~filter on $[0,1]^{\Join}$. An entailment relation $\models^\Dmc_{\Gsquareorder}$ is \emph{induced} by $\Dmc$ if the following property holds for every $\Gamma\cup\{\chi\}\subseteq\LGsquareorder$ and every $\Gsquare$-valuation~$v$:
\begin{align*}
\Gamma\models^\Dmc_{\Gsquareorder}\chi&\text{ iff }v[\Gamma]\in\Dmc\Rightarrow v(\chi)\in\Dmc
\end{align*}

An entailment relation $\models^\Dmc_{\Gsquareorder}$ (and its corresponding filter $\Dmc$) is \emph{validity-stable} if
\begin{align*}
\langle v_1(\phi),v_2(\phi)\rangle\in\Dmc\text{ for every }\Gsquare\text{ valuations }v_1\text{ and }v_2&\text{ iff }\phi\text{ is }\Gsquareorder\text{-valid}
\end{align*}
\end{definition}

Let us now address question $\mathbf{II}$ from the~\nameref{sec:introduction}. We begin by recalling from~\cite[Propositions~5 and~6]{BilkovaFrittellaKozhemiachenko2021TABLEAUX} several important properties of $\Gsquareorder$ that will simplify the proofs in this section.\footnote{The extended version of~\cite{BilkovaFrittellaKozhemiachenko2021TABLEAUX} can be found here: \href{https://arxiv.org/pdf/2105.07217}{arXiv:2105.07217}. The proofs of the propositions below are on pp.20--22.}
\begin{lemma}[{\cite[Proposition~5]{BilkovaFrittellaKozhemiachenko2021TABLEAUX}}]\label{lemma:conflationfiltersvalidity}
Let $\phi\in\LGsquareorder$. For any $v(p)=\langle v_1(p),v_2(p)\rangle$, we set $v^*(p)=\langle1-v_2(p),1-v_1(p)\rangle$. Then $v(\phi)=\langle x,y\rangle$ iff $v^*(\phi)=\langle1-y,1-x\rangle$.
\end{lemma}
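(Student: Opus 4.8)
The plan is to prove the single functional identity $v^*(\phi)=\langle 1-v_2(\phi),1-v_1(\phi)\rangle$ for every $\phi\in\LGsquareorder$, i.e.\ that $v^*$ arises from $v$ by applying the order-reversing involution $\langle x,y\rangle\mapsto\langle 1-y,1-x\rangle$ (the \emph{conflation}) coordinatewise. Once this is in place, the stated equivalence is immediate: substituting $v(\phi)=\langle x,y\rangle$ yields the left-to-right direction, and the converse follows because conflation is an involution — equivalently, because $(v^*)^*=v$, which one checks directly on variables since $1-(1-v_1(p))=v_1(p)$. Thus the whole content reduces to the functional identity, which I would establish by induction on the structure of $\phi$.

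The base case $\phi=p$ holds by the very definition of $v^*$. For the inductive step I would handle the binary connectives in dual pairs. The cases $\wedge$ and $\vee$ are pure De Morgan bookkeeping: writing $v(\psi)=\langle x_1,y_1\rangle$ and $v(\chi)=\langle x_2,y_2\rangle$ and invoking the induction hypothesis, the identities $\min(1-y_1,1-y_2)=1-\max(y_1,y_2)$ and $\max(1-x_1,1-x_2)=1-\min(x_1,x_2)$ produce exactly the conflation of $v(\psi\wedge\chi)$, and symmetrically for $\vee$. The case $\phi=\neg\psi$ is a one-line coordinate swap composed with the hypothesis.

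The instructive cases are $\rightarrow$ (dually $\coimplies$) and the unary $\sim$, $\triangle$, since there the claim records precisely the fact that the second-coordinate clauses of Definition~\ref{def:Gsquare} were chosen to be the conflation-duals of the first-coordinate (bi-Gödel) clauses. For $\rightarrow$ I would reduce the goal to the two order-reversal identities
\[
(1-a)\rightarrow_\Gmsf(1-b)=1-(b\coimplies_\Gmsf a),\qquad (1-b)\coimplies_\Gmsf(1-a)=1-(a\rightarrow_\Gmsf b),
\]
each verified by the single case split $a\leq b$ versus $a>b$ straight from the definitions of $\rightarrow_\Gmsf$ and $\coimplies_\Gmsf$; the case $\coimplies$ is symmetric. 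For $\sim$ and $\triangle$ I would check the analogous identities $\sim_\Gmsf(1-y)=1-(1\coimplies_\Gmsf y)$, $\;1\coimplies_\Gmsf(1-x)=1-{\sim_\Gmsf}x$, $\;\triangle_\Gmsf(1-y)=1-{\sim_\Gmsf}{\sim_\Gmsf}y$, and $\;{\sim_\Gmsf}{\sim_\Gmsf}(1-x)=1-\triangle_\Gmsf x$, each by the trivial split on whether the argument is $0$, is $1$, or lies strictly between. There is no genuine obstacle here: the only point requiring care is pairing each second-coordinate operation of Definition~\ref{def:Gsquare} with the correct order-reversed first-coordinate operation, so that the induction amounts to a disciplined case analysis confirming that conflation commutes with every connective of $\Gsquareorder$.
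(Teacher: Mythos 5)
Your proof is correct, and the structural induction showing that conflation commutes with every connective (with the key order-reversal identities pairing $\rightarrow_\Gmsf$ with $\coimplies_\Gmsf$, $\sim_\Gmsf$ with $1\coimplies_\Gmsf(\cdot)$, and $\triangle_\Gmsf$ with $\sim_\Gmsf\sim_\Gmsf$) is exactly the intended argument; all the identities you list check out against Definitions~\ref{def:biGalgebra} and~\ref{def:Gsquare}. Note that the paper itself does not reprove this lemma but imports it from \cite[Proposition~5]{BilkovaFrittellaKozhemiachenko2021TABLEAUX}, where the proof proceeds by the same induction.
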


We also reformulate Proposition~6 from~\cite{BilkovaFrittellaKozhemiachenko2021TABLEAUX} in terms of point-generated filters (cf.~the original formulation in~\cite[p.28]{BilkovaFrittellaKozhemiachenko2021TABLEAUX}).
\begin{lemma}[{\cite[Proposition~6]{BilkovaFrittellaKozhemiachenko2021TABLEAUX}}]\label{lemma:01joinstablefilters}
Let $\phi\in\LGsquareorder$. Let further, $\Dmc$ be a~point-generated filter on $[0,1]^{\Join}$. Then $\phi$ is $\Gsquareorder$-valid iff $v(\phi)\in\Dmc$ for every $\Gsquare$-valuation~$v$.
\end{lemma}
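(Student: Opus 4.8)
The plan is to prove the two directions separately: the forward implication is immediate, while the converse rests on a coordinatewise order-isomorphism argument of the same flavour as Theorems~\ref{theorem:validitystable01} and~\ref{theorem:orderisfilter01}, combined with the conflation symmetry recorded in Lemma~\ref{lemma:conflationfiltersvalidity}. For the left-to-right direction I would simply observe that $\Gsquareorder$-validity of $\phi$ means $v(\phi)=\langle1,0\rangle$ for every $v$, that $\langle1,0\rangle$ is the top element of the truth order on $[0,1]^{\Join}$, and that every filter is nonempty and upward closed; hence $\langle1,0\rangle\in\Dmc$, so $v(\phi)\in\Dmc$ for all $v$. (For a point-generated $\Dmc=\langle x_0,y_0\rangle^\uparrow$ this is just $1\geq x_0$ and $0\leq y_0$.)

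The substance is the contrapositive of the converse: assuming $\phi$ is not $\Gsquareorder$-valid, I must produce a \emph{single} valuation whose value at $\phi$ escapes $\Dmc=\langle x_0,y_0\rangle^\uparrow$. First I would establish the key technical lemma that coordinatewise application of a strictly increasing bijection commutes with evaluation: if $h:[0,1]\rightarrow[0,1]$ is an order-isomorphism (so $h(0)=0$, $h(1)=1$, and $h$ both preserves and reflects $\leq$) and $v'(p)=\langle h(v_1(p)),h(v_2(p))\rangle$, then $v'(\psi)=\langle h(v_1(\psi)),h(v_2(\psi))\rangle$ for every $\psi\in\LGsquareorder$. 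This is a routine induction: the $\min$/$\max$ clauses for $\wedge,\vee$ pass through any monotone map, the clauses for $\rightarrow,\coimplies,{\sim},\triangle$ on both coordinates (built from $\rightarrow_\Gmsf,\coimplies_\Gmsf,{\sim_\Gmsf},\triangle_\Gmsf$) only test equalities and strict inequalities against $0$ and $1$, which $h$ respects because it fixes the endpoints and reflects order, and the two $\neg$-clauses merely swap the coordinates, which is precisely why the \emph{same} $h$ must be used on both sides. I expect this induction to be the only place demanding care, and it is the main, though routine, obstacle.

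With the lemma in hand I would fix a witness $v$ with $v(\phi)=\langle a,b\rangle\neq\langle1,0\rangle$, so that $a<1$ or $b>0$. Since $\Dmc$ is point-generated we have $\langle x_0,y_0\rangle\neq\langle0,1\rangle$, hence $x_0>0$ or $y_0<1$, and membership $\langle a',b'\rangle\in\Dmc$ fails as soon as $a'<x_0$ or $b'>y_0$. If $x_0>0$, I would drive the first coordinate below $x_0$: when $a<1$, pick $h$ with $h(a)<x_0$ and take $v'$ from the lemma; when $a=1$ (so $b>0$), first pass to $v^*$ of Lemma~\ref{lemma:conflationfiltersvalidity}, whose value at $\phi$ is $\langle1-b,1-a\rangle=\langle1-b,0\rangle$ with $1-b<1$, and then apply an order-isomorphism to $v^*$. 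If instead $y_0<1$, I would symmetrically drive the second coordinate above $y_0$, using the witness directly when $b>0$ and passing through $v^*$ (whose second coordinate becomes $1-a>0$) when $b=0$. In each case the resulting valuation has $v'(\phi)\notin\Dmc$, as required; the needed order-isomorphisms (e.g.\ piecewise-linear maps sending one prescribed interior point to another) exist trivially. The role of conflation is essential precisely in the boundary subcase where the first coordinate is already maximal and $y_0=1$ blocks the second coordinate — there one cannot escape $\Dmc$ without first reflecting through $v^*$.
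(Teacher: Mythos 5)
Your proof is correct; note that the paper itself does not prove this lemma but imports it as Proposition~6 of~\cite{BilkovaFrittellaKozhemiachenko2021TABLEAUX}, so there is no in-paper argument to compare against. Your two ingredients --- the coordinatewise order-isomorphism $h$ commuting with evaluation, and the conflation map $v^*$ of Lemma~\ref{lemma:conflationfiltersvalidity} to handle the boundary subcases where one coordinate is already extremal --- are exactly the techniques the paper deploys for the analogous results (Theorem~\ref{theorem:orderisfilter01}, Lemmas~\ref{lemma:conflationfiltersentailment} and~\ref{lemma:thinpointgenerated01join}), and your case analysis covers all configurations of $\langle a,b\rangle\neq\langle1,0\rangle$ against $\langle x_0,y_0\rangle\neq\langle0,1\rangle$.
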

It is now immediate that the following statement holds.
\begin{lemma}\label{lemma:conflationfiltersentailment}
Let $\Dmc$ and $\Dmc^*$ be filters on $[0,1]^{\Join}$ s.t.\ $\Dmc^*=\{\langle1-y,1-x\rangle\mid\langle x,y\rangle\in\Dmc\}$. Then, $\models^\Dmc_{\Gsquareorder}$ and $\models^{\Dmc^*}_{\Gsquareorder}$ coincide.
\end{lemma}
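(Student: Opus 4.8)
The plan is to show that the map $\sigma\colon\langle x,y\rangle\mapsto\langle1-y,1-x\rangle$ witnesses a perfect symmetry between the two filters, so that falsifying the $\Dmc$-entailment can be converted into falsifying the $\Dmc^*$-entailment and vice versa. The central tool is Lemma~\ref{lemma:conflationfiltersvalidity}: the conflation $v^*$ defined by $v^*(p)=\langle1-v_2(p),1-v_1(p)\rangle$ satisfies $v(\phi)=\langle x,y\rangle$ iff $v^*(\phi)=\langle1-y,1-x\rangle=\sigma(\langle x,y\rangle)$ for every $\phi\in\LGsquareorder$. Observe that $\sigma$ is an involution ($\sigma(\sigma(\langle x,y\rangle))=\langle x,y\rangle$) and that $\Dmc^{**}=\Dmc$; hence it suffices to prove one direction of the coincidence, say $\Gamma\models^\Dmc_{\Gsquareorder}\chi\Rightarrow\Gamma\models^{\Dmc^*}_{\Gsquareorder}\chi$, and the converse follows by symmetry applied to the pair $(\Dmc^*,\Dmc)$.

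First I would record the compatibility of $\sigma$ with the infimum/supremum used to evaluate a set of premises. For a set $\Gamma$ we have $v^*_1[\Gamma]=\inf\{v^*_1(\phi)\mid\phi\in\Gamma\}=\inf\{1-v_2(\phi)\mid\phi\in\Gamma\}=1-\sup\{v_2(\phi)\mid\phi\in\Gamma\}=1-v_2[\Gamma]$, and dually $v^*_2[\Gamma]=1-v_1[\Gamma]$. Thus $v^*[\Gamma]=\sigma(v[\Gamma])$, exactly as for single formulas. The key equivalence to extract is therefore
\begin{align*}
v[\Gamma]\in\Dmc&\iff v^*[\Gamma]\in\Dmc^*,&v(\chi)\in\Dmc&\iff v^*(\chi)\in\Dmc^*,
\end{align*}
both of which are immediate from the definition $\Dmc^*=\{\sigma(\langle x,y\rangle)\mid\langle x,y\rangle\in\Dmc\}$ together with $v^*(\chi)=\sigma(v(\chi))$ and $v^*[\Gamma]=\sigma(v[\Gamma])$.

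With these in hand the argument is a short contrapositive. Suppose $\Gamma\not\models^{\Dmc^*}_{\Gsquareorder}\chi$, so there is a valuation $u$ with $u[\Gamma]\in\Dmc^*$ but $u(\chi)\notin\Dmc^*$. Since conflation is an involution on valuations, write $u=v^*$ for $v$ the conflation of $u$; then by the equivalences above $v[\Gamma]\in\Dmc$ while $v(\chi)\notin\Dmc$, so $\Gamma\not\models^\Dmc_{\Gsquareorder}\chi$. This gives $\models^\Dmc_{\Gsquareorder}\subseteq\models^{\Dmc^*}_{\Gsquareorder}$, and applying the same reasoning with the roles of $\Dmc$ and $\Dmc^*$ exchanged (using $\Dmc^{**}=\Dmc$) yields the reverse inclusion, hence coincidence.

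There is no real obstacle here; the statement is essentially a bookkeeping corollary of Lemma~\ref{lemma:conflationfiltersvalidity}, which is why the excerpt flags it as ``immediate''. The only point demanding a line of care is the interaction of $\sigma$ with \emph{sets} of premises: one must check that the order-reversing nature of $y\mapsto1-y$ correctly swaps $\inf$ into $\sup$ (and conversely) so that $v^*[\Gamma]=\sigma(v[\Gamma])$ holds for the aggregated values and not merely for single formulas. Once that is verified, the equivalence of filter-membership for $\langle v_1[\Gamma],v_2[\Gamma]\rangle$ under $\Dmc$ versus $\Dmc^*$ is definitional, and the proof closes.
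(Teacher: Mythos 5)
Your proof is correct and follows essentially the same route as the paper: reduce to one direction via the involution $\Dmc^{**}=\Dmc$, then transport a falsifying valuation through the conflation $v\mapsto v^*$ of Lemma~\ref{lemma:conflationfiltersvalidity}. The only difference is that you spell out the $\inf$/$\sup$ swap showing $v^*[\Gamma]=\sigma(v[\Gamma])$ for sets of premises, a detail the paper leaves implicit.
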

\begin{proof}
Since $(\Dmc^*)^*=\Dmc$, it suffices to check that $\Gamma\models^{\Dmc^*}_{\Gsquareorder}\chi$ entails $\Gamma\models^\Dmc_{\Gsquareorder}\chi$. To see this, let $v$ witness $\Gamma\not\models^\Dmc_{\Gsquareorder}\chi$. By Lemma~\ref{lemma:conflationfiltersvalidity}, it is clear that $v^*$ witnesses $\Gamma\not\models^{\Dmc^*}_{\Gsquareorder}\chi$.
\end{proof}

Using Lemma~\ref{lemma:01joinstablefilters}, it is simple to show the analogue of Theorem~\ref{theorem:validitystable01}.
\begin{theorem}\label{theorem:validitystable01join}
An entailment relation $\models^\Dmc_{\Gsquareorder}$ is validity-stable iff there is a~point-generated filter $\Dmc'$ s.t.\ $\Dmc\subseteq\Dmc'$.
\end{theorem}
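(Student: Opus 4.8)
The plan is to mirror the structure of the proof of Theorem~\ref{theorem:validitystable01}, splitting into two directions. For the easy direction, suppose $\Dmc\subseteq\Dmc'$ for a point-generated $\Dmc'$. If $\phi$ is $\Gsquareorder$-valid then $v(\phi)=\langle1,0\rangle\in\Dmc$ for every $v$ (since $\langle1,0\rangle$ lies in every filter), so the ‘only if’ half of validity-stability is automatic. The ‘if’ half requires that whenever $v(\phi)\in\Dmc$ for every $v$, then $\phi$ is valid: here I would invoke Lemma~\ref{lemma:01joinstablefilters} applied to the point-generated filter $\Dmc'$. Indeed, if $v(\phi)\in\Dmc$ for all $v$, then \emph{a fortiori} $v(\phi)\in\Dmc'$ for all $v$, and Lemma~\ref{lemma:01joinstablefilters} yields that $\phi$ is $\Gsquareorder$-valid. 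This direction is short precisely because the real work was already done in Lemma~\ref{lemma:01joinstablefilters}.

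For the converse (contrapositive) direction, I would assume $\Dmc$ is \emph{not} contained in any point-generated filter and exhibit a non-valid $\phi$ with $v(\phi)\in\Dmc$ for every $v$. The key is to understand which filters fail to be contained in a point-generated one. Recalling the classification from Remark~\ref{rem:nonpointgeneratedfilters01Join}, the filters on $[0,1]^{\Join}$ not contained in any point-generated filter are exactly the ‘boundary’ two-dimensional ones reaching the bottom corner, namely $\langle0^\circ,1^\circ\rangle^\uparrow$, $\langle0^\bullet,1^\circ\rangle^\uparrow$, and $\langle0^\circ,1^\bullet\rangle^\uparrow$ (and, symmetrically, their one-dimensional analogues such as $(0,1]$ in a single coordinate). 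Each such filter excludes the point $\langle0,1\rangle$ but contains points arbitrarily close to it in at least one coordinate. The strategy is then to find a formula whose value avoids the excluded boundary yet is never $\langle1,0\rangle$, witnessing non-validity.

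The natural candidate, echoing the $\biG$ case where $p\vee{\sim}p$ worked, is to use $p\vee{\sim}p$ or a strong-negation variant. For the positive (truth) coordinate, $v_1(p\vee{\sim}p)>0$ always, so its first coordinate never hits $0$; one checks the second coordinate $v_2(p\vee{\sim}p)$ to confirm the value lands in the relevant boundary-excluded filter while the formula is not valid (its value is not constantly $\langle1,0\rangle$). Depending on which of the three boundary filters $\Dmc$ sits inside, I would adjust the witness—possibly combining $p\vee{\sim}p$ with its $\neg$-dual or a conjunction like $(p\vee{\sim}p)\wedge\neg(\text{something})$—and invoke Lemma~\ref{lemma:conflationfiltersentailment} to reduce the $\langle0^\bullet,1^\circ\rangle^\uparrow$ and $\langle0^\circ,1^\bullet\rangle^\uparrow$ cases to one another by conflation symmetry, cutting down the case analysis.

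The main obstacle I anticipate is the case analysis over the several non-point-generated boundary filters, since each has a slightly different excluded edge and may require a tailored witness formula; verifying that a single chosen formula lands strictly inside the filter (avoiding both the excluded coordinate-$0$ edge and, where relevant, the excluded coordinate-$1$ edge) while remaining non-valid is the delicate computational core. I expect the conflation symmetry of Lemma~\ref{lemma:conflationfiltersentailment} to collapse this to essentially two genuinely distinct cases, and the $\biG$-style observation that $v_1(p\vee{\sim}p)$ is bounded away from $0$ to supply the essential non-validity-but-always-designated phenomenon.
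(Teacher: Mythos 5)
Your proposal is correct and follows essentially the same route as the paper: the easy direction is exactly the appeal to Lemma~\ref{lemma:01joinstablefilters} via $\Dmc\subseteq\Dmc'$, and the hard direction identifies the same three boundary filters $\langle0^\circ,1^\circ\rangle^\uparrow$, $\langle0^\bullet,1^\circ\rangle^\uparrow$, $\langle0^\circ,1^\bullet\rangle^\uparrow$ and uses the same witness $p\vee{\sim}p$, which (since $v_1(p\vee{\sim}p)>0$ and $v_2(p\vee{\sim}p)<1$ for every $v$) lands in all three filters simultaneously, so no case split or conflation argument is actually needed. The only imprecision is your parenthetical about ``one-dimensional analogues'': every one-dimensional filter, e.g.\ $\langle0,0\rangle^{\sharp\uparrow}$, \emph{is} contained in a point-generated filter such as $\langle0,0\rangle^\uparrow$, so those extra cases are vacuous and the three two-dimensional filters are the only ones to handle.
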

\begin{proof}
Assume that there is \emph{no point-generated filter $\Dmc'$ that contains $\Dmc$}. This means that
\begin{align}
\Dmc&\in\left\{\langle0^\circ,1^\circ\rangle^\uparrow,\langle0^\bullet,1^\circ\rangle^\uparrow,\langle0^\circ,1^\bullet\rangle^\uparrow\right\}
\label{equ:edgefilters}
\end{align}
Now since $\phi\in\LbiG$ is $\biG$-valid iff it is $\Gsquareorder$-valid, we know that $p\vee{\sim} p$ is not $\Gsquareorder$-valid. Moreover, we know that $v_1(p\vee{\sim}p)>0$ and $v_2(p\vee{\sim}p)<1$ for every~$v_1$ and~$v_2$. But if~\eqref{equ:edgefilters} holds, then $\{\langle x,y\rangle\mid x>0,y<1\}\subseteq\Dmc$. Thus, $\models^{\Dmc}_{\Gsquareorder}$, is not validity-stable when $\Dmc\in\left\{\langle0^\circ,1^\circ\rangle^\uparrow,\langle0^\bullet,1^\circ\rangle^\uparrow,\langle0^\circ,1^\bullet\rangle^\uparrow\right\}$.

For the converse, let $\Dmc\subseteq\Dmc'$ for some $\Dmc'$ generated by $\langle x,y\rangle$. If $\Dmc$ is point-generated, we obtain the result by Lemma~\ref{lemma:01joinstablefilters}. Otherwise, $\Dmc=\langle x^\circledast,y^\odot\rangle^\uparrow$ for some $\circledast,\odot\in\{\circ,\bullet\}$.\footnote{Recall from Remark~\ref{rem:nonpointgeneratedfilters01Join} that $\langle x^\bullet,y^\bullet\rangle^\uparrow$ is point-generated.} Thus, $\Dmc\subseteq\langle x,y\rangle^\uparrow$. But we have just proved that $\langle x,y\rangle^\uparrow$ is validity stable (i.e., $\phi$ is $\Gsquareorder$-valid iff $v(\phi)\in\langle x,y\rangle^\uparrow$ is always the case). Hence, $\Dmc$ is also validity-stable.
\end{proof}

In Section~\ref{sec:invG}, we proved that there are six filter-induced entailment relations in $\invG$. All of them preserve valid formulas and none coincide with the order-entailment. We also showed how to reduce entailments induced by point-generated filters to the order-entailment. In the remainder of this section, we show that there are eleven filter-induced entailment relations in $\Gsquareorder$ one of which coincides with $\models^\leq_{\Gsquareorder}$ and two other ones are not validity-stable. We will also show how to reduce entailment relations induced by point-generated filters to $\models^\leq_{\Gsquareorder}$.

We begin with a~definition of two classes of filters on $[0,1]^{\Join}$.
\begin{definition}[Classes of filters]\label{def:01joinclassesoffilters}
Let $\Dmc$ be a~filter on $[0,1]^{\Join}$. We say that $\Dmc$ is
\begin{itemize}
\item \emph{paraconsistent} if there is some $x\in[0,1]$ s.t.\ $\langle x,x\rangle\in\Dmc$;
\item \emph{value-prime} if either $\langle1,1\rangle\in\Dmc$ or $\langle0,0\rangle\in\Dmc$.
\end{itemize}
\end{definition}
It is easy to see that $p,\neg p\not\models^{\Dmc}_{\Gsquareorder}q$ iff $\Dmc$ is paraconsistent and that a~value-prime filter $\Dmc'$ is prime in the following sense: if $v(\phi\vee\chi)\in\Dmc'$, then $v(\phi)\in\Dmc'$ or $v(\chi)\in\Dmc'$. In addition, all value-prime filters are paraconsistent but the converse is false.

As every filter belonging to a~class from Definition~\ref{def:01joinclassesoffilters} can be point-generated or non-point-generated, the remainder of this section is divided into three parts. First, we tackle entailment relations induced by \emph{point-generated} filters. After that, we consider filters from Remark~\ref{rem:nonpointgeneratedfilters01Join}. Finally, we show which entailment relations induced by non-point-generated filters coincide with those induced by point-generated ones and establish reductions of the latter to $\models^\leq_{\Gsquareorder}$.
\subsection[Point-generated filters]{Entailment relations induced by point-generated filters}
We begin by obtaining the lower bound.
\begin{lemma}\label{lemma:lowerboundpointgenerated01join}
Let $\langle x,y\rangle^\uparrow$ be a~non-paraconsistent filter, and let $\langle x',y'\rangle^\uparrow$ be a~paraconsistent and non-prime filter with $x'<y'<1$, and $0<z<1$. Then the following entailment relations are all pairwise distinct.
\begin{align*}
\models^{\langle1,0\rangle^\uparrow}_{\Gsquareorder}&&\models^{\langle z,0\rangle^\uparrow}_{\Gsquareorder}&&\models^{\langle1,1\rangle^\uparrow}_{\Gsquareorder}&&\models^{\langle x,y\rangle^\uparrow}_{\Gsquareorder}&&\models^{\left\langle\sfrac{1}{2},\sfrac{1}{2}\right\rangle^\uparrow}_{\Gsquareorder}&&\models^{\langle z,1\rangle^\uparrow}_{\Gsquareorder}&&
\models^{\langle x',y'\rangle^\uparrow}_{\Gsquareorder}
\end{align*}
\end{lemma}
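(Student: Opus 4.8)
The plan is to organise the $\binom{7}{2}=21$ distinctness verifications around a small number of \emph{coordinate tests}, each realised as a single concrete entailment, so that only a handful of pairs require a bespoke witness. Write $\Dmc_1,\dots,\Dmc_7$ for the seven filters in the order listed ($\langle1,0\rangle^\uparrow$, $\langle z,0\rangle^\uparrow$, $\langle1,1\rangle^\uparrow$, $\langle x,y\rangle^\uparrow$, $\langle\sfrac{1}{2},\sfrac{1}{2}\rangle^\uparrow$, $\langle z,1\rangle^\uparrow$, $\langle x',y'\rangle^\uparrow$). The first test is \emph{paraconsistency}: as recorded after Definition~\ref{def:01joinclassesoffilters}, $p,\neg p\models^\Dmc_{\Gsquareorder}q$ holds iff $\Dmc$ is non-paraconsistent. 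Since $\Dmc_1,\Dmc_2,\Dmc_4$ are non-paraconsistent (their generators satisfy $x>y$) while $\Dmc_3,\Dmc_5,\Dmc_6,\Dmc_7$ are paraconsistent, this one entailment separates the first three from the last four, disposing of $12$ pairs at once.

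The second test is \emph{value-primality}, which I would realise by $\triangle p\models^\Dmc_{\Gsquareorder}\triangletop p$. The point is that $\triangle p$ only ever takes the four ‘classical’ values $\langle0,0\rangle,\langle0,1\rangle,\langle1,0\rangle,\langle1,1\rangle$ (using $v_1(\triangle p)=\triangle_\Gmsf v_1(p)$ and $v_2(\triangle p)={\sim_\Gmsf}{\sim_\Gmsf}v_2(p)$), and the corner $\langle1,0\rangle$ arises exactly when $v(p)=\langle1,0\rangle$. Hence, by \eqref{equ:triangletop}, the entailment fails precisely when $\Dmc$ contains $\langle0,0\rangle$ or $\langle1,1\rangle$, i.e.\ precisely when $\Dmc$ is value-prime. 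Among the seven this isolates $\Dmc_3,\Dmc_6$, separating them from $\Dmc_5,\Dmc_7$ (four more pairs) and reconfirming their difference from the non-paraconsistent trio. After these two tests, only five within-class pairs remain. For $(\Dmc_1,\Dmc_2)$ and $(\Dmc_1,\Dmc_4)$ I would use $p\models^\Dmc_{\Gsquareorder}\triangletop p$: this holds for $\Dmc_1=\{\langle1,0\rangle\}$ (the premise forces $v(p)=\langle1,0\rangle$) but fails for $\Dmc_2$ and $\Dmc_4$ by taking $v(p)=\langle z,0\rangle$ resp.\ $\langle x,y\rangle$, which lie in the filter yet differ from $\langle1,0\rangle$. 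For $(\Dmc_2,\Dmc_4)$ the difference is that $\Dmc_2$ forces $v_2=0$ whereas $\Dmc_4$ tolerates $v_2\leq y$ with $y>0$; the formula ${\sim}\neg p$, whose value is $\langle{\sim_\Gmsf}v_2(p),1\coimplies_\Gmsf v_1(p)\rangle$, detects $v_2(p)=0$, so $p\models^\Dmc_{\Gsquareorder}{\sim}\neg p$ holds for $\Dmc_2$ and fails for $\Dmc_4$ at $v(p)=\langle x,y\rangle$. For $(\Dmc_3,\Dmc_6)$, which differ by forcing $v_1=1$ versus $v_1\geq z$, the entailment $p\models^\Dmc_{\Gsquareorder}\triangle p$ holds for $\Dmc_3$ and fails for $\Dmc_6$ at $v(p)=\langle z,y_0\rangle$. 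The conflation symmetry of Lemmas~\ref{lemma:conflationfiltersvalidity} and~\ref{lemma:conflationfiltersentailment} can be used to shorten some of these verifications.

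The main obstacle is the last pair $(\Dmc_5,\Dmc_7)$: two paraconsistent, non-value-prime filters that look superficially alike. The key observation is that $v[\{p,\neg p\}]=\langle\min(v_1(p),v_2(p)),\max(v_1(p),v_2(p))\rangle$ always lies on or below the diagonal, so it belongs to $\Dmc_5=\langle\sfrac{1}{2},\sfrac{1}{2}\rangle^\uparrow$ \emph{only} when $v(p)=\langle\sfrac{1}{2},\sfrac{1}{2}\rangle$, whereas it belongs to $\Dmc_7=\langle x',y'\rangle^\uparrow$ for every $v$ with $v_1(p),v_2(p)\in[x',y']$. Here the hypotheses $x'<y'<1$ together with the fact that a non-prime filter is not value-prime (the contrapositive of ‘value-prime implies prime’, so $\langle0,0\rangle,\langle1,1\rangle\notin\Dmc_7$) force $0<x'<y'<1$, leaving genuine room below the diagonal. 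I would exploit this with $p,\neg p\models^\Dmc_{\Gsquareorder}\triangle(p\leftrightarrow\neg p)$: the gadget $p\leftrightarrow\neg p$ has $v_1=1$ iff $v_1(p)=v_2(p)$, so under $\Dmc_5$ the forced value $\langle\sfrac{1}{2},\sfrac{1}{2}\rangle$ yields $v(\triangle(p\leftrightarrow\neg p))=\langle1,0\rangle\in\Dmc_5$, giving the entailment; but under $\Dmc_7$, taking $v(p)=\langle x',y'\rangle$ puts the premise in the filter while $v_1(p)\neq v_2(p)$ makes $v_1(\triangle(p\leftrightarrow\neg p))=0<x'$, so the conclusion escapes $\Dmc_7$. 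Assembling a table that assigns to each of the $21$ pairs one of the separating entailments above then completes the argument; the only genuinely delicate step is this last one, where one must check both the positive and the negative support of $\triangle(p\leftrightarrow\neg p)$ at the two critical valuations.
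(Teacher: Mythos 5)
Your overall strategy --- a battery of concrete test entailments, each tracking a structural property of the filter, assembled into a table --- is the same as the paper's, and most of your tests are sound: the paraconsistency test is the paper's P.I, your value-primality test $\triangle p\models^\Dmc_{\Gsquareorder}\triangletop p$ and your separation of $\langle\sfrac{1}{2},\sfrac{1}{2}\rangle^\uparrow$ from $\langle x',y'\rangle^\uparrow$ via $p,\neg p\models^\Dmc_{\Gsquareorder}\triangle(p\leftrightarrow\neg p)$ are correct alternatives to the paper's P.IV and P.III, and your tests for $(\Dmc_1,\Dmc_2)$, $(\Dmc_1,\Dmc_4)$ and $(\Dmc_3,\Dmc_6)$ check out. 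There is, however, one genuine gap: the test you propose for the pair $(\langle z,0\rangle^\uparrow,\langle x,y\rangle^\uparrow)$ does not separate it. You claim $p\models^{\Dmc}_{\Gsquareorder}{\sim}\neg p$ holds for $\Dmc=\langle z,0\rangle^\uparrow$, but it fails there too: taking $v(p)=\langle z,0\rangle\in\langle z,0\rangle^\uparrow$ gives $v({\sim}\neg p)=\langle{\sim_\Gmsf}0,\,1\coimplies_\Gmsf z\rangle=\langle1,1\rangle$, which is not in $\langle z,0\rangle^\uparrow$ since its second coordinate is not $\leq0$. The culprit is precisely the second coordinate $1\coimplies_\Gmsf v_1(p)$ that you wrote down: it equals $1$ rather than $0$ whenever $v_1(p)<1$, and membership of $v(p)$ in $\langle z,0\rangle^\uparrow$ only forces $v_1(p)\geq z$. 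So the entailment fails for both filters and separates nothing; and none of your other tests distinguishes this pair either, since both filters are non-paraconsistent, non-value-prime, and both falsify $p\models\triangletop p$ and $p\models\triangle p$.

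To close the gap you need a test that detects \emph{one-dimensionality} of the filter rather than the single condition $v_2(p)=0$. The paper's statement P.II, namely $(p\coimplies q),q\models^\Dmc_{\Gsquareorder}r$ with $r$ fresh, does exactly this: for the premises to be jointly designated one needs two points of $\Dmc$, one strictly above the other in both coordinates, which is possible in $\langle x,y\rangle^\uparrow$ (take $v(q)=\langle x,y\rangle$ and $v(p)=\langle x'',y''\rangle$ with $x''>x$, $y''<y$, so that $v(p\coimplies q)=\langle x'',y''\rangle$) but impossible in $\langle z,0\rangle^\uparrow$, where $v_2(q)=0$ forces $v_2(p\coimplies q)=0\rightarrow_\Gmsf v_2(p)=1$. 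Substituting this (or any equivalent two-dimensionality detector) for your broken test closes the table of $21$ pairs; the rest of your argument stands.
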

\begin{proof}
Consider the following statements.
\begin{enumerate}[label=P.\Roman*]
\item\label{item:nonparaconsistentfilter} $p\wedge\neg p\models^\Dmc_{\Gsquareorder}q$.
\item\label{item:thinfilter} $(p\coimplies q),q\models^\Dmc_{\Gsquareorder}r$.
\item\label{item:finitelyparaconsistentfilter} $p\coimplies\neg p,\neg p\models^\Dmc_{\Gsquareorder}q$.
\item\label{item:nonprimefilter} ${\sim}(p\wedge\neg p),\neg{\sim}(p\wedge\neg p)\models^\Dmc_{\Gsquareorder}q$.
\item\label{item:designatedfilter} $p,{\sim}\triangletop p\models^\Dmc_{\Gsquareorder}q$.
\end{enumerate}
And further, consider the table below. The routine check below reveals that statements \ref{item:nonparaconsistentfilter}--\ref{item:designatedfilter} indeed hold as specified therein.

First of all, it is clear that~\ref{item:nonparaconsistentfilter} holds iff $\Dmc$ \emph{does not} contain points of the form $\langle x,x\rangle$. Indeed, evaluating $v(p)=\langle x,x\rangle$ and $v(q)\notin\Dmc$, one can falsify~\ref{item:nonparaconsistentfilter}. For the converse, let $p\wedge\neg p\not\models^\Dmc_{\Gsquareorder}q$. Then, $\{\langle x',y'\rangle,\langle y',x'\rangle\}\subseteq\Dmc$ for some~$x'$ and~$y'$. Assume w.l.o.g.\ that $x'\leq y'$. As $\Dmc$ is a~filter on $[0,1]^{\Join}$ w.r.t.\ upwards order, we have that $\langle x',y'\rangle\leq_{[0,1]^{\Join}}\langle x',x'\rangle$, whence $\langle x',x'\rangle\in\Dmc$.

For~\ref{item:thinfilter}, observe that there is~$v$ s.t.\ $\{v(p\coimplies q),v(q)\}\subseteq\Dmc$ iff there are $x,x',y,y'\in[0,1]$ s.t.\ $\{\langle x,y\rangle,\langle x',y'\rangle\}\subseteq\Dmc$, $x<x'$, and $y>y'$. This means that there is no zero- or one-dimensional filter~$\Dmc$ s.t.\ $\{v(p\coimplies q),v(q)\}\subseteq\Dmc$. Now observe that the conclusion and the premises of~\ref{item:thinfilter} do not have common variables. Hence, the only way for this entailment to hold w.r.t.\ $\models^\Dmc_{\Gsquareorder}$ is if there is no~$v$ s.t.\ $\{v(p\coimplies q),v(q)\}\subseteq\Dmc$. Thus, \ref{item:thinfilter} holds only in zero- and one-dimensional filters.

For \ref{item:finitelyparaconsistentfilter}, we note that it holds w.r.t.\ $\models^\Dmc_{\Gsquareorder}$ iff there are no $x,y\in[0,1]$ s.t.\ $x>y$ and $\langle y,x\rangle\in\Dmc$. Indeed, $\langle y,x\rangle\in\Dmc$ implies that $\langle x,y\rangle\in\Dmc$ because $\langle y,x\rangle\leq_{[0,1]^{\Join}}\langle x,y\rangle$. We can now evaluate $v(p)=\langle x,y\rangle$ which gives $\{v(p\coimplies\neg p),v(\neg p)\}\subseteq\Dmc$. Evaluating $v(q)=\langle0,1\rangle$, we falsify~\ref{item:finitelyparaconsistentfilter}, as required. Conversely, let $p\coimplies\neg p,\neg p\not\models^\Dmc_{\Gsquareorder}q$. Then $\{v(p\coimplies\neg p),v(\neg p)\}\subseteq\Dmc$ and $v(q)\notin\Dmc$. We show that there are some $x',y'\in[0,1]$ s.t.\ $x'>y'$ and $\langle y',x'\rangle\in\Dmc$. We have $v(\neg p)\in\Dmc$. Moreover, observe that $v(p)\leq_{[0,1]^{\Join}}v(\neg p)$ or $v(p)\geq_{[0,1]^{\Join}}v(\neg p)$ in every~$v$. Thus, $v(p)>_{[0,1]^{\Join}}v(\neg p)$ (otherwise, $v(p\coimplies\neg p)=\langle0,1\rangle$ but $\langle0,1\rangle\notin\Dmc$). Thus, $v(p)\in\Dmc$ as well which means that there are $x',y'\in[0,1]$ s.t.\ $x'>y'$ and $\langle y',x'\rangle\in\Dmc$.



Statement~\ref{item:nonprimefilter} holds iff $\Dmc$ is \emph{non-prime}. Indeed, a~prime filter will contain either $\langle0,0\rangle$ or $\langle1,1\rangle$. We evaluate $v(p)=\langle1,1\rangle$ in the first case and $v(p)=\langle0,0\rangle$ in the second case. One can see that $v({\sim}(p\wedge\neg p))=v(\neg{\sim}(p\wedge\neg p))=\langle1,1\rangle$ and $v({\sim}(p\wedge\neg p))=v(\neg{\sim}(p\wedge\neg p))=\langle0,0\rangle$, respectively. Conversely, observe that $v({\sim}(p\wedge\neg p))\in\{\langle1,0\rangle,\langle1,1\rangle,\langle0,0\rangle,\langle0,1\rangle\}$. Thus, if $\Dmc\cap\{\langle0,0\rangle,\langle1,1\rangle\}=\varnothing$, then there is no~$v$ s.t.\ $v({\sim}(p\wedge\neg p))\in\Dmc$ and $v(\neg{\sim}(p\wedge\neg p))\in\Dmc$.

Finally, one can see from~\eqref{equ:triangletop} that~\ref{item:designatedfilter} holds iff $\Dmc=\{\langle1,0\rangle\}$. Indeed, if $\Dmc\neq\{\langle1,0\rangle\}$, one can just evaluate $v(p)=\langle x,y\rangle\neq\langle1,0\rangle$ with $\langle x,y\rangle\in\Dmc$ and $v(q)\notin\Dmc$. Conversely, there is no~$v$ s.t.\ $v(p)=v({\sim}\triangletop p)=\langle1,0\rangle$.

It is now clear that statements \ref{item:nonparaconsistentfilter}--\ref{item:designatedfilter} indeed hold as specified in the table below. The result follows as all lines in the table are pairwise distinct.
\begin{center}
\begin{tabular}{c|ccccc}
&P.I&P.II&P.III&P.IV&P.V\\\hline
$\models^{\langle1,0\rangle^\uparrow}_{\Gsquareorder}$&yes&yes&yes&yes&yes\\[1em]
$\models^{\langle z,0\rangle^\uparrow}_{\Gsquareorder}$&yes&yes&yes&yes&no\\[1em]
$\models^{\langle1,1\rangle^\uparrow}_{\Gsquareorder}$&no&yes&yes&no&no\\[1em]
$\models^{\langle x,y\rangle^\uparrow}_{\Gsquareorder}$&yes&no&yes&yes&no\\[1em]
$\models^{\langle x',y'\rangle^\uparrow}_{\Gsquareorder}$&no&no&no&yes&no\\[1em]
$\models^{\left\langle\sfrac{1}{2},\sfrac{1}{2}\right\rangle^\uparrow}_{\Gsquareorder}$&no&no&yes&yes&no\\[1em]
$\models^{\langle z,1\rangle^\uparrow}_{\Gsquareorder}$&no&no&no&no&no
\end{tabular}
\end{center}\qedhere
\end{proof}

To show that the lower bound is exact, we reduce other filter-induced entailment relations to one of the mentioned in the previous lemma.
\begin{lemma}\label{lemma:thinpointgenerated01join}
Any two entailment relations induced by one-dimensional non-prime point-generated filters coincide.
\end{lemma}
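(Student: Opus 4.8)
The plan is to reduce to a single family of filters by conflation and then transport witnessing valuations within that family by an order-automorphism of $[0,1]$ applied simultaneously to both coordinates---the $\Gsquare$-analogue of item~$\mathbf{A}$ in the proof of Theorem~\ref{theorem:6entailmentsinvG}. First I would pin down the filters at issue: a point-generated filter $\langle x,y\rangle^\uparrow$ is one-dimensional exactly when $x=1$ or $y=0$ (otherwise it is a genuine two-dimensional rectangle contained in no edge filter). Discarding the zero-dimensional case $\{\langle1,0\rangle\}$ and imposing non-primeness ($\langle0,0\rangle,\langle1,1\rangle\notin\Dmc$, which forces $x>0$ in the first case and $y<1$ in the second) leaves exactly the horizontal filters $\langle x,0\rangle^\uparrow$ with $0<x<1$ and the vertical filters $\langle1,y\rangle^\uparrow$ with $0<y<1$. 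Since $(\langle1-y,0\rangle^\uparrow)^{*}=\langle1,y\rangle^\uparrow$, Lemma~\ref{lemma:conflationfiltersentailment} already identifies each $\models^{\langle1,y\rangle^\uparrow}_{\Gsquareorder}$ with some $\models^{\langle x,0\rangle^\uparrow}_{\Gsquareorder}$; hence it suffices to show that all the horizontal filters $\langle x,0\rangle^\uparrow$ (for $0<x<1$) induce one and the same relation.

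The key tool is a transfer lemma: if $h:[0,1]\to[0,1]$ is an order-automorphism fixing $0$ and $1$ and $v'=\langle h\circ v_1,h\circ v_2\rangle$, then $v'(\phi)=\langle h(v_1(\phi)),h(v_2(\phi))\rangle$ for every $\phi\in\LGsquareorder$. This is proved by induction on $\phi$, the only point being that $h$ commutes with each clause of the $\Gsquare$-semantics. For $\wedge_\Gmsf,\vee_\Gmsf,\rightarrow_\Gmsf,\coimplies_\Gmsf,{\sim_\Gmsf},\triangle_\Gmsf$ this is precisely the computation already carried out in Theorems~\ref{theorem:orderisfilter01} and~\ref{theorem:6entailmentsinvG}; the genuinely new clauses are the one for $\neg$, which merely swaps the two coordinates and is therefore compatible with applying the \emph{same} $h$ to both, and the second-coordinate clauses $v_2(\triangle\phi)={\sim_\Gmsf}{\sim_\Gmsf}v_2(\phi)$ and $v_2({\sim}\phi)=1\coimplies_\Gmsf v_2(\phi)$, which are ``is the value positive?'' and ``is the value $1$?'' tests that $h$ preserves since it fixes $0$ and $1$.

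With the transfer lemma in hand, I would fix $x,x'\in(0,1)$, choose $h$ with $h(x)=x'$, and start from a valuation $v$ witnessing $\Gamma\not\models^{\langle x,0\rangle^\uparrow}_{\Gsquareorder}\chi$, setting $v'=\langle h\circ v_1,h\circ v_2\rangle$. The premise $v[\Gamma]\in\langle x,0\rangle^\uparrow$ unpacks as $v_1(\phi)\geq x$ and $v_2(\phi)=0$ for every $\phi\in\Gamma$, whence $v'_1(\phi)=h(v_1(\phi))\geq x'$ and $v'_2(\phi)=0$, so $v'[\Gamma]\in\langle x',0\rangle^\uparrow$; while $v(\chi)\notin\langle x,0\rangle^\uparrow$ means $v_1(\chi)<x$ or $v_2(\chi)>0$, and each of these is preserved by $h$ (strictly increasing, fixing $0$), giving $v'(\chi)\notin\langle x',0\rangle^\uparrow$. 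Thus $v'$ witnesses $\Gamma\not\models^{\langle x',0\rangle^\uparrow}_{\Gsquareorder}\chi$, and the reverse implication follows by exchanging the roles of $x$ and $x'$. Hence all horizontal filters coincide, and together with the conflation step so do all one-dimensional non-prime point-generated filters.

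The main obstacle is conceptual bookkeeping rather than hard analysis: one must ensure that a \emph{single} automorphism is used on both coordinates---otherwise the clause for $\neg$ breaks---and that the non-standard second-coordinate clauses for $\triangle$ and $\sim$ are genuinely $h$-invariant. Note that, unlike the $\invG$ case, no reflection condition $h(1-y)=1-h(y)$ is needed, since $\neg$ permutes coordinates instead of reflecting values; this actually makes the present argument marginally simpler than its $\invG$ counterpart.
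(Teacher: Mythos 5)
Your proposal is correct and follows essentially the same route as the paper: reduce the vertical filters $\langle1,y\rangle^\uparrow$ to the horizontal ones $\langle x,0\rangle^\uparrow$ via Lemma~\ref{lemma:conflationfiltersentailment}, then transport a witnessing valuation between $\langle x,0\rangle^\uparrow$ and $\langle x',0\rangle^\uparrow$ by a strictly order-preserving $h$ fixing $0$ and $1$ applied to both coordinates, verified by induction on formulas. Your closing observations (a single $h$ must be used on both coordinates because of $\neg$, and no reflection condition is needed unlike in $\invG$) are accurate and match the paper's construction.
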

\begin{proof}
We need to prove that given $0<x,x'<1$ and $0<y,y'<1$, any two entailment relations $\Emc,\Emc'\in\left\{\models^{\langle 1,y\rangle^\uparrow}_{\Gsquareorder},\models^{\langle1,y'\rangle^\uparrow}_{\Gsquareorder},\models^{\langle x,0\rangle^\uparrow}_{\Gsquareorder},\models^{\langle x',0\rangle^\uparrow}_{\Gsquareorder}\right\}$ coincide. By Lemma~\ref{lemma:conflationfiltersentailment}, it suffices to show that $\models^{\langle x,0\rangle^\uparrow}_{\Gsquareorder}$ and $\models^{\langle x',0\rangle^\uparrow}_{\Gsquareorder}$ coincide for every $0<x,x'<1$ as $\models^{\langle 1,y\rangle^\uparrow}_{\Gsquareorder}$ coincides with $\models^{\langle 1-y,0\rangle^\uparrow}_{\Gsquareorder}$. This can be shown by an argument from Theorem~\ref{theorem:orderisfilter01}. Let $z,z'<1$ with $z\neq z'$, consider two entailment relations $\models^{\langle z,0\rangle^\uparrow}_{\Gsquareorder}$ and $\models^{\langle z',0\rangle^\uparrow}_{\Gsquareorder}$, and pick a~function $h:[0,1]\rightarrow[0,1]$ s.t.
\begin{itemize}
\item $w\leq w'$ iff $h(w)\leq h(w')$\footnote{We remind our readers that this condition entails, in particular, that if $w>w'$, then $h(w)>h(w')$.};
\item $h(0)=0$, $h(z)=z'$, $h(1)=1$.
\end{itemize}
Now let $v=\langle v_1,v_2\rangle$ witness $\Gamma\not\models^{\langle z,0\rangle^\uparrow}_{\Gsquareorder}\chi$. We show that $v^\Updownarrow=\langle v^\Updownarrow_1,v^\Updownarrow_2\rangle$ with $v^\Updownarrow_i(p)=h(v_i(p))$ witnesses $\Gamma\not\models^{\langle z',0\rangle^\uparrow}_{\Gsquareorder}\chi$.\footnote{Note that the other direction: ‘if $\Gamma\not\models^{\langle z',0\rangle^\uparrow}_{\Gsquareorder}\chi$, then $\Gamma\not\models^{\langle z,0\rangle^\uparrow}_{\Gsquareorder}\chi$’ can be obtained in the same way just by swapping $z$ and~$z'$.} To do this, it suffices to verify by induction on $\phi\in\LGsquareorder$ that $v^\Updownarrow(\phi)=\langle h(v_1(\phi)),h(v_2(\phi))\rangle$. The basis case of $\phi=p$ holds by the construction of $v^\Updownarrow$. For induction steps, we only consider the cases of $\phi=\neg\psi$ and $\phi=\varrho\rightarrow\sigma$ as other connectives can be dealt with similarly.

Let $\phi=\neg\psi$ and $v(\neg\psi)=\langle w,w'\rangle$. Then $v(\psi)=\langle w',w\rangle$. By the induction hypothesis, we have that $v^\Updownarrow(\psi)=\langle h(w'),h(w)\rangle$, whence, $v^\Updownarrow(\neg\psi)=\langle h(w),h(w')\rangle$, as required. Now let $\phi=\varrho\rightarrow\sigma$ with $v_1(\varrho\rightarrow\sigma)=w$ and $v_2(\varrho\rightarrow\sigma)=w'$. Consider $v_1$: if $w=1$, we have $v_1(\varrho)\leq v_1(\sigma)$, whence by the induction hypothesis and by definition of~$h$, we get $v^\Updownarrow_1(\varrho)\leq v^\Updownarrow_1(\sigma)$, i.e., $v^\Updownarrow_1(\varrho\rightarrow\sigma)=1=h(w)$. Otherwise, if $w<1$, we have $v_1(\varrho)>v_1(\sigma)=w$, whence $v^\Updownarrow_1(\varrho)>v^\Updownarrow_1(\sigma)=h(w)$. Again, we have $v^\Updownarrow_1(\varrho\rightarrow\sigma)=h(w)$. For $v_2(\varrho\rightarrow\sigma)=w'$, we proceed in a~dual manner. If $w'=0$, we have $v_2(\varrho)\geq v_2(\sigma)$, whence, by the induction hypothesis and construction of~$h$, we get $v^\Updownarrow_2(\varrho)\geq v^\Updownarrow_2(\sigma)$, i.e., $v_2(\varrho\rightarrow\sigma)=0=h(0)$. If $w'>0$, then $w'=v_2(\sigma)>v_2(\varrho)$, and thus, $h(w')=v^\Updownarrow_2(\sigma)>v_2(\varrho)$. Hence, $v^\Updownarrow_2(\varrho\rightarrow\sigma)=h(w')$.
\end{proof}

Other reductions of different filters to one another can be shown in the same manner as in Lemma~\ref{lemma:thinpointgenerated01join}, so we compile them all into one statement.
\begin{lemma}\label{lemma:otherpointgenerated01join}
Let $1>\max(x,x')\geq\min(x,x')>\max(y,y')\geq\min(y,y')>0$. Then the following entailment relations coincide.
\begin{align*}
1.\models^{\langle x,y\rangle^\uparrow}_{\Gsquareorder}\text{ and }\models^{\langle x'\!,y'\rangle^\uparrow}_{\Gsquareorder};&&2.\models^{\langle x,x\rangle^\uparrow}_{\Gsquareorder}\text{ and }\models^{\langle x',x'\rangle^\uparrow}_{\Gsquareorder};&&
3.\models^{\langle y,x\rangle^\uparrow}_{\Gsquareorder}\text{ and }\models^{\langle y'\!,x'\rangle^\uparrow}_{\Gsquareorder};&&4.\models^{\langle x,1\rangle^\uparrow}_{\Gsquareorder}\text{ and }\models^{\langle x'\!,1\rangle^\uparrow}_{\Gsquareorder}
\end{align*}
\end{lemma}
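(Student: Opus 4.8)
The plan is to reuse verbatim the order-automorphism technique from Lemma~\ref{lemma:thinpointgenerated01join}. Recall that for any strictly increasing bijection $h:[0,1]\rightarrow[0,1]$ (hence an order-isomorphism of the complete lattice $[0,1]$, so that $h(0)=0$, $h(1)=1$, and $h$ preserves all infima and suprema), setting $v^\Updownarrow_i(p)=h(v_i(p))$ yields $v^\Updownarrow(\phi)=\langle h(v_1(\phi)),h(v_2(\phi))\rangle$ for every $\phi\in\LGsquareorder$. The induction on $\phi$ is identical to the one in Lemma~\ref{lemma:thinpointgenerated01join}: the case $\phi=\neg\psi$ is just a swap of coordinates, and the $\biG$-connectives go through because every G\"{o}del operation ($\min$, $\max$, $\rightarrow_\Gmsf$, $\coimplies_\Gmsf$, ${\sim}_\Gmsf$, $\triangle_\Gmsf$, as well as the $1\coimplies_\Gmsf(\cdot)$ and ${\sim}_\Gmsf{\sim}_\Gmsf(\cdot)$ appearing in the $v_2$-clauses) commutes with any order-isomorphism of $[0,1]$ fixing $0$ and $1$. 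I would state this lifting fact once and then only vary the choice of $h$ across the four items.

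The single new ingredient is that membership in a point-generated filter is transferred \emph{exactly} by such an $h$. Indeed, $\langle a,b\rangle\leq_{[0,1]^{\Join}}\langle c,d\rangle$ iff $a\leq c$ and $d\leq b$, and since $h$ preserves $\leq$ in both directions we get $\langle c,d\rangle\in\langle a,b\rangle^\uparrow$ iff $\langle h(c),h(d)\rangle\in\langle h(a),h(b)\rangle^\uparrow$. Because $h$ moreover preserves infima and suprema, $v^\Updownarrow[\Gamma]=\langle h(v_1[\Gamma]),h(v_2[\Gamma])\rangle$. Consequently, if $h$ sends the generator of the source filter to the generator of the target filter coordinatewise, then $v$ witnesses the failure of the source entailment iff $v^\Updownarrow$ witnesses the failure of the target entailment, which establishes the desired coincidence.

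It thus remains only to exhibit the required $h$ in each item; by Lemma~\ref{lemma:conflationfiltersentailment} one may additionally replace any $\langle a,b\rangle^\uparrow$ by $\langle 1-b,1-a\rangle^\uparrow$ if convenient. For item~4 the second coordinate is already fixed ($1=h(1)$), so I only need $h(x)=x'$; and item~2 needs $h(x)=x'$ as well. Both are realised by any increasing endpoint-fixing bijection sending $x$ to $x'$, which exists since $x,x'\in(0,1)$. For items~1 and~3 I need a single $h$ realising \emph{two} constraints simultaneously, namely $h(x)=x'$ together with $h(y)=y'$ (the generators $\langle x,y\rangle$ and $\langle y,x\rangle$ impose the same pair of conditions). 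Here the global hypothesis $1>\max(x,x')\geq\min(x,x')>\max(y,y')\geq\min(y,y')>0$ is exactly what is needed: it forces $0<y<x<1$ and $0<y'<x'<1$, so the interior points appear in the same relative order and a piecewise-linear increasing homeomorphism of $[0,1]$ fixing $0,1$ and matching $y\mapsto y'$, $x\mapsto x'$ exists.

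The only point requiring care — and the closest thing to an obstacle — is verifying that \emph{one} $h$ can meet both coordinate constraints at once in items~1 and~3; this is precisely guaranteed by the order-compatibility encoded in the hypothesis, so no genuine difficulty arises. Everything else is a direct reprise of Lemma~\ref{lemma:thinpointgenerated01join}.
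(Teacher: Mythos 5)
Your proposal is correct and follows essentially the same route as the paper: an order-preserving map $h$ on $[0,1]$ fixing $0$ and $1$ and sending the generator of one filter to the generator of the other, lifted coordinatewise to valuations and verified by the induction of Lemma~\ref{lemma:thinpointgenerated01join}. Your additional care in requiring $h$ to be a bijection (hence preserving infima and suprema over infinite $\Gamma$) and in checking that the two constraints $h(x)=x'$, $h(y)=y'$ are simultaneously realisable is a welcome tightening of details the paper leaves implicit, but not a different argument.
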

\begin{proof}
The proofs are essentially the same as in Lemma~\ref{lemma:thinpointgenerated01join}. We give a~sketch of the proof of~$1$. We assume that $\Gamma\not\models^{\langle x,y\rangle^\uparrow}_{\Gsquareorder}\chi$ and that $v[\Gamma]\in\langle x,y\rangle^\uparrow$ but $v(\chi)\notin\langle x,y\rangle^\uparrow$. We pick a~function $h:[0,1]\rightarrow[0,1]$ with the following properties:
\begin{itemize}
\item $h(x)=x'$, $h(y)=y'$, $h(0)=0$, $h(1)=1$;
\item $y\leq y'$ iff $h(y)\leq h(y')$.
\end{itemize}
One can now define $v^\Updownarrow=\langle v^\Updownarrow_1,v^\Updownarrow_2\rangle$ and $v^\Updownarrow_i(p)=h(v_i(p))$ and show by induction on $\LGsquareorder$ formulas that for every $\phi$, it holds $v^\Updownarrow(\phi)=\langle h(v_1(\phi)),h(v_2(\phi))\rangle$. The proof can be conducted in the same way as in Lemma~\ref{lemma:thinpointgenerated01join}.
\end{proof}
\begin{remark}\label{rem:whyistheproofok}
One may wonder how Lemma~\ref{lemma:lowerboundpointgenerated01join} affects the construction of $h$ in Lemmas~\ref{lemma:thinpointgenerated01join} and~\ref{lemma:otherpointgenerated01join}. Observe that the crucial property of $h$ is that it \emph{preserves strict orders}. In other words, it can only ‘squeeze’ points on $[0,1]^{\Join}$ but cannot map two points into one. Thus, it is impossible to construct a~suitable $h$ between, e.g., $\langle\sfrac{1}{3},\sfrac{2}{3}\rangle^\uparrow$ and $\langle\sfrac{1}{2},\sfrac{1}{2}\rangle^\uparrow$. Indeed, we should have $h(\frac{1}{3})=h(\frac{2}{3})=\frac{1}{2}$ but $\frac{1}{3}<\frac{2}{3}$ which contradicts the definition of~$h$.
\end{remark}
\begin{theorem}\label{theorem:7pointgeneratedfilters01join}
There are exactly seven entailment relations in $\Gsquareorder$ that are induced by \emph{point-generated} filters on $[0,1]^{\Join}$.
\end{theorem}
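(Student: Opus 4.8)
The plan is to sandwich the count between seven and seven. The lower bound is already in hand: Lemma~\ref{lemma:lowerboundpointgenerated01join} exhibits seven concrete point-generated filters whose induced entailment relations are pairwise distinct. So the remaining task is the matching upper bound, namely that the entailment relation induced by an \emph{arbitrary} point-generated filter $\langle x,y\rangle^\uparrow$ (with $\langle x,y\rangle\neq\langle0,1\rangle$) coincides with one of those seven. I would do this by partitioning the generating points $\langle x,y\rangle$ according to their position in $[0,1]^{\Join}$ and checking that each cell collapses onto exactly one representative from Lemma~\ref{lemma:lowerboundpointgenerated01join}, using the reduction lemmas together with conflation.

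Concretely, I would split into the following seven cases on $\langle x,y\rangle$. (A)~$\langle1,0\rangle$: the zero-dimensional filter, giving $\models^{\langle1,0\rangle^\uparrow}_{\Gsquareorder}$ by itself. (B)~The one-dimensional non-prime filters, i.e.\ $\langle x,0\rangle^\uparrow$ with $0<x<1$ and $\langle1,y\rangle^\uparrow$ with $0<y<1$; these all coincide with $\models^{\langle z,0\rangle^\uparrow}_{\Gsquareorder}$ by Lemma~\ref{lemma:thinpointgenerated01join}. (C)~The one-dimensional prime filters $\langle1,1\rangle^\uparrow$ and $\langle0,0\rangle^\uparrow$, which coincide with each other, and hence with $\models^{\langle1,1\rangle^\uparrow}_{\Gsquareorder}$, by the conflation symmetry of Lemma~\ref{lemma:conflationfiltersentailment} (note $(\langle1,1\rangle^\uparrow)^*=\langle0,0\rangle^\uparrow$). (D)~The two-dimensional non-paraconsistent filters with $0<y<x<1$, all coinciding with $\models^{\langle x,y\rangle^\uparrow}_{\Gsquareorder}$ by part~1 of Lemma~\ref{lemma:otherpointgenerated01join}. (E)~The diagonal paraconsistent filters $\langle x,x\rangle^\uparrow$ with $0<x<1$, all coinciding with $\models^{\left\langle\sfrac{1}{2},\sfrac{1}{2}\right\rangle^\uparrow}_{\Gsquareorder}$ by part~2. (F)~The off-diagonal paraconsistent non-prime filters with $0<x<y<1$, all coinciding with $\models^{\langle x',y'\rangle^\uparrow}_{\Gsquareorder}$ by part~3 (recorded there in the form $\langle y,x\rangle^\uparrow$). (G)~The two-dimensional prime filters $\langle x,1\rangle^\uparrow$ with $0<x<1$ together with their conflation-images $\langle0,y\rangle^\uparrow$ with $0<y<1$, all coinciding with $\models^{\langle z,1\rangle^\uparrow}_{\Gsquareorder}$ by part~4 combined with Lemma~\ref{lemma:conflationfiltersentailment}.

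Since the substantive work of building order-preserving reparametrisations $h$ is already discharged inside Lemmas~\ref{lemma:thinpointgenerated01join} and~\ref{lemma:otherpointgenerated01join} and by conflation, what is left is essentially bookkeeping. The step that needs genuine care, and the main place a proof of this shape can go wrong, is verifying that the seven cells are exhaustive and mutually exclusive once every boundary is accounted for ($x\in\{0,1\}$, $y\in\{0,1\}$, and the three regions $x>y$, $x=y$, $x<y$), and that primality and paraconsistency are read off correctly at the edges: e.g.\ that $\langle x,1\rangle^\uparrow$ and $\langle0,y\rangle^\uparrow$ are prime and two-dimensional, whereas $\langle1,y\rangle^\uparrow$ with $y<1$ is non-prime and one-dimensional, and that every interior point with $0<x<1$ and $0<y<1$ is automatically non-prime. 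Once the partition is checked to cover each $\langle x,y\rangle\neq\langle0,1\rangle$ exactly once and each cell is matched to its representative as above, combining this covering with the pairwise distinctness of Lemma~\ref{lemma:lowerboundpointgenerated01join} yields exactly seven relations.
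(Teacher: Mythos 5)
Your proposal is correct and follows essentially the same route as the paper: the lower bound is taken from Lemma~\ref{lemma:lowerboundpointgenerated01join}, and the upper bound is obtained by partitioning the point-generated filters into the same seven cells (the paper organises them by primality and the number of diagonal points $\langle x,x\rangle$ they contain, which amounts to exactly your case split on the position of the generating point) and collapsing each cell onto a representative via Lemmas~\ref{lemma:conflationfiltersentailment}, \ref{lemma:thinpointgenerated01join}, and~\ref{lemma:otherpointgenerated01join}. No substantive difference.
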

\begin{proof}
From Lemma~\ref{lemma:lowerboundpointgenerated01join}, we have that there are \emph{at least} seven entailment relations in $\Gsquareorder$ induced by point-generated filters on $[0,1]^{\Join}$. Now observe, that the following classification of point-generated filters on $[0,1]^{\Join}$ is complete.
\begin{enumerate}
\item Prime filters:
\begin{enumerate}
\item $\Dmc$ contains only one point of the form $\langle x,x\rangle$ (i.e., $\Dmc\in\{\langle0,0\rangle^\uparrow,\langle1,1\rangle^\uparrow\}$);
\item $\Dmc$ contains (uncountable infinitely) many points of the form $\langle x,x\rangle$ (i.e., $\Dmc\in\{\langle x,1\rangle^\uparrow\mid1<x<0\}\cup\{\langle 0,y\rangle^\uparrow\mid1<y<0\}$).
\end{enumerate}
\item Non-prime filters:
\begin{enumerate}
\item $\Dmc$ contains no point of the form $\langle x,x\rangle$:
\begin{enumerate}
\item $\Dmc=\{\langle1,0\rangle\}$;
\item $\Dmc$ only contains points of the form $\langle x,0\rangle$ or $\langle1,y\rangle$ with $1\leq x<0$, $1<y\leq0$ (i.e., $\Dmc$ is one-dimensional);
\item $\Dmc$ contains points of the form $\langle x,y\rangle$ with $x<1$ and $y>0$;
\end{enumerate}
\item $\Dmc$ contains only one point of the form $\langle x,x\rangle$;
\item $\Dmc$ contains (uncountable infinitely) many points of the form $\langle x,x\rangle$.
\end{enumerate}
\end{enumerate}

By Lemmas~\ref{lemma:conflationfiltersentailment}, \ref{lemma:thinpointgenerated01join} and~\ref{lemma:otherpointgenerated01join} we have that
\begin{itemize}
\item all filters of the class 1.(a) induce $\models^{\langle1,1\rangle^\uparrow}_{\Gsquareorder}$;
\item all filters of the class 1.(b) induce $\models^{\langle\sfrac{1}{2},1\rangle^\uparrow}_{\Gsquareorder}$;
\item the only filter in the class 1.(a)i induces $\models^{\langle1,0\rangle^\uparrow}_{\Gsquareorder}$;
\item all filters in the class 2.(a)ii induce $\models^{\langle\sfrac{1}{2},0\rangle^\uparrow}_{\Gsquareorder}$;
\item all filters in the class 2.(a)iii induce $\models^{\langle\sfrac{2}{3},\sfrac{1}{3}\rangle^\uparrow}_{\Gsquareorder}$;
\item all filters in the class 2.(b) induce $\models^{\langle\sfrac{1}{2},\sfrac{1}{2}\rangle^\uparrow}_{\Gsquareorder}$;
\item all filters in the class 2.(c) induce $\models^{\langle\sfrac{1}{3},\sfrac{2}{3}\rangle^\uparrow}_{\Gsquareorder}$.
\end{itemize}
The result follows.
\end{proof}
\subsection[Non-point-generated filters]{Entailment relations induced by non-point-generated filters}
Let us now proceed to the non-point-generated filters. We tackle them in a~similar manner: first, we establish the lower bound on the number of entailments and then show its exactness.
\begin{lemma}\label{lemma:lowerboundnonpointgenerated01join}
Let $1>x>y>0$. Then the following entailment relations are all pairwise distinct.
\begin{align*}
\models^{\langle x,0\rangle^{\sharp\uparrow}}_{\Gsquareorder}&&\models^{\langle1,1\rangle^{\sharp\uparrow}}_{\Gsquareorder}&&\models^{\langle x^\circ,y^\circ\rangle^\uparrow}_{\Gsquareorder}&&\models^{\langle y^\circ,x^\circ\rangle^\uparrow}_{\Gsquareorder}&&\models^{\langle x^\circ,1^\circ\rangle^\uparrow}_{\Gsquareorder}&&\models^{\langle x^\circ,1^\bullet\rangle^{\uparrow}}_{\Gsquareorder}&&\models^{\langle0^\circ,1^\bullet\rangle}_{\Gsquareorder}&&\models^{\langle0^\circ,1^\circ\rangle^\uparrow}_{\Gsquareorder}
\end{align*}
\end{lemma}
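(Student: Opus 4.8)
The plan is to reproduce the table method of Lemma~\ref{lemma:lowerboundpointgenerated01join}: for each of the eight filters I first compute its shape as a region of $[0,1]^{\Join}$ --- e.g.\ $\langle x,0\rangle^{\sharp\uparrow}=\{\langle a,0\rangle\mid a>x\}$, $\langle1,1\rangle^{\sharp\uparrow}=\{\langle1,b\rangle\mid b<1\}$, $\langle x^\circ,y^\circ\rangle^\uparrow=\{\langle a,b\rangle\mid a>x,\,b<y\}$, $\langle x^\circ,1^\bullet\rangle^\uparrow=\{\langle a,b\rangle\mid a>x\}$, and so on --- and read off whether it is paraconsistent, (one-)dimensional, and (non-)prime. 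The statements \ref{item:nonparaconsistentfilter}, \ref{item:thinfilter}, and \ref{item:nonprimefilter} then apply verbatim and, taken together, separate every pair except $\{\langle x,0\rangle^{\sharp\uparrow},\langle1,1\rangle^{\sharp\uparrow}\}$, $\{\langle y^\circ,x^\circ\rangle^\uparrow,\langle x^\circ,1^\circ\rangle^\uparrow,\langle0^\circ,1^\circ\rangle^\uparrow\}$, and $\{\langle x^\circ,1^\bullet\rangle^\uparrow,\langle0^\circ,1^\bullet\rangle^\uparrow\}$.

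To finish the easy part I would add two more columns. The first is the entailment $q\models^\Dmc_{\Gsquareorder}p\vee{\sim}p$: since $v_1(p\vee{\sim}p)>0$ and $v_2(p\vee{\sim}p)<1$ for every $v$, while both coordinates are otherwise unconstrained, the set of values of $p\vee{\sim}p$ is exactly $\langle0^\circ,1^\circ\rangle^\uparrow$, so this entailment holds iff $\langle0^\circ,1^\circ\rangle^\uparrow\subseteq\Dmc$, i.e.\ (among the eight) only for $\langle0^\circ,1^\circ\rangle^\uparrow$ and $\langle0^\circ,1^\bullet\rangle^\uparrow$. The second column is validity-stability: by Theorem~\ref{theorem:validitystable01join} only $\langle0^\circ,1^\circ\rangle^\uparrow$ and $\langle0^\circ,1^\bullet\rangle^\uparrow$ fail to be contained in a point-generated filter. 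Either column peels $\langle0^\circ,1^\circ\rangle^\uparrow$ off the second block and separates $\langle x^\circ,1^\bullet\rangle^\uparrow$ from $\langle0^\circ,1^\bullet\rangle^\uparrow$, leaving only the two residual pairs $\{\langle x,0\rangle^{\sharp\uparrow},\langle1,1\rangle^{\sharp\uparrow}\}$ and $\{\langle y^\circ,x^\circ\rangle^\uparrow,\langle x^\circ,1^\circ\rangle^\uparrow\}$.

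These two pairs are where I expect the real difficulty. Using Lemma~\ref{lemma:conflationfiltersentailment} I would first normalise them: $\langle1,1\rangle^{\sharp\uparrow}$ is conflation-equivalent to $\langle0,0\rangle^{\sharp\uparrow}=\{\langle a,0\rangle\mid a>0\}$, and $\langle x^\circ,1^\circ\rangle^\uparrow$ to the filter $\{\langle c,d\rangle\mid c>0,\,d<1-x\}$ of truth-threshold $0$, so that in both cases the two filters agree except that one has its open lower boundary at an \emph{interior} truth-level and the other at the \emph{extremal} level $0$. One inclusion of the resulting hierarchy comes for free: any order-isomorphism $h$ of $[0,1]$ fixing $0$ and $1$ lifts, exactly as in Lemma~\ref{lemma:thinpointgenerated01join}, to a valuation transformation $v^\Updownarrow$ with $v^\Updownarrow(\phi)=\langle h(v_1(\phi)),h(v_2(\phi))\rangle$; choosing $h$ so that it sends the relevant infimum above $x$ while fixing $0$ shows that any counterexample to the extremal-threshold entailment rescales to one for the interior-threshold entailment, whence the interior-threshold entailment implies the extremal one.

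The obstacle is the strict separation, i.e.\ producing $\Gamma$ and $\chi$ with $\Gamma\models^\Dmc_{\Gsquareorder}\chi$ for the extremal-threshold filter but not for the interior one. No single-conclusion instance $q\models^\Dmc_{\Gsquareorder}\chi$ can do this, since such an instance holds iff the value set of $\chi$ lies in $\Dmc$, and the rescalings above, together with the fact that $v_2(\chi)\equiv0$ forces $v_1(\chi)\equiv1$ by Lemma~\ref{lemma:conflationfiltersvalidity} (hence validity), prevent any non-valid formula from remaining on the truth-axis slice or below an interior falsity-cap; finite premise sets seem equally powerless, because $v_1[\Gamma]$ and $v_2[\Gamma]$ are then an attained minimum and maximum and behave like a single worst-case point. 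I would therefore separate each residual pair by an entailment whose premise set is \emph{infinite}, exploiting that these filters are non-principal: the infimum (resp.\ supremum) of the truth- (resp.\ falsity-)values of an infinite family is a non-attained limit that the open boundary admits into the designated set only when it sits at the extreme $0$ (resp.\ $1$). Making this construction explicit --- realising the required families by $\LGsquareorder$-formulas and keeping the conclusion on the correct slice --- is the delicate core of the argument, and I would pattern the attendant inductive value computations on those of Lemmas~\ref{lemma:thinpointgenerated01join} and~\ref{lemma:otherpointgenerated01join}.
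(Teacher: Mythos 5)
Your reduction of the problem is sound and follows the paper's own scheme: the columns you propose (\ref{item:nonparaconsistentfilter}, \ref{item:thinfilter}, \ref{item:nonprimefilter}, the observation that the value set of $p\vee{\sim}p$ is exactly $\langle0^\circ,1^\circ\rangle^\uparrow$, and validity-stability) do isolate the two pairs $\{\langle x,0\rangle^{\sharp\uparrow},\langle1,1\rangle^{\sharp\uparrow}\}$ and $\{\langle y^\circ,x^\circ\rangle^\uparrow,\langle x^\circ,1^\circ\rangle^\uparrow\}$ as the remaining work. But that remaining work is exactly what your proposal does not do, and your reason for deferring it rests on a false premise. You claim that finite premise sets ``behave like a single worst-case point'' and therefore cannot separate these pairs; the paper refutes this by exhibiting finite separating instances (its statements \ref{item:no11} and \ref{item:noloweredge}). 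The idea you are missing is that a finite premise set can impose \emph{structural} constraints on where that worst-case point sits: the premise $\triangle p\leftrightarrow\neg\triangle p$ forces $v(p)$ onto the boundary set $\{\langle1,b\rangle\mid b>0\}\cup\{\langle a,0\rangle\mid a<1\}$, and $\triangletop$-formulas such as ${\sim}\triangletop(p\rightarrow\neg p)$ and $\triangletop(\neg p\rightarrow p)$ then force strict order relations between $v(p)$, $v(\neg p)$, and $v(q)$. Intersecting such a definable boundary region with the filter gives qualitatively different answers for $\langle1,1\rangle^{\sharp\uparrow}$ (where the segment $\{\langle1,b\rangle\mid0<b<1\}$ is entirely designated) and for $\langle x,0\rangle^{\sharp\uparrow}$ (where it is entirely undesignated); this is how the paper's \ref{item:no11} separates that pair with no appeal to infinite $\Gamma$ or to non-principality.

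Because you never instantiate the infinite families you appeal to, the separations of the two residual pairs are missing and the lemma is not proved. One further caution: for the pair $\{\langle y^\circ,x^\circ\rangle^\uparrow,\langle x^\circ,1^\circ\rangle^\uparrow\}$ you are right that paraconsistency does not help, since both filters contain diagonal points $\langle z,z\rangle$ (the latter for $z\in(x,1)$); note that the paper's printed table marks \ref{item:nonparaconsistentfilter} as holding for $\models^{\langle x^\circ,1^\circ\rangle^\uparrow}_{\Gsquareorder}$, which is worth re-examining against the definition of $\langle x^\circ,1^\circ\rangle^\uparrow$, so this pair genuinely needs a bespoke separating instance as well. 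In any case, a completed proof must produce explicit $\Gamma$ and $\chi$ for both pairs; gesturing at non-attained infima over infinite premise sets, without constructing the formulas and verifying the induced values, leaves the core of the lemma unestablished.
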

\begin{proof}
Recall the list of statements in Lemma~\ref{lemma:lowerboundpointgenerated01join}. We continue the enumeration here and consider the statements below.
\begin{enumerate}[label=P.\Roman*]
\setcounter{enumi}{5}
\item\label{item:no11} $\Gamma\models^\Dmc_{\Gsquareorder}(q\vee\neg q)\vee{\sim}(q\vee\neg q)$ with
\begin{align*}
\Gamma&=\left\{\begin{matrix}\triangle p\leftrightarrow\neg\triangle p,&\triangle q\leftrightarrow\neg\triangle q,&{\sim}\triangletop(p\rightarrow q),&\triangletop(q\rightarrow p),&p,\\
\triangletop(\neg p\rightarrow p),&{\sim}\triangletop(p\rightarrow\neg p),&\triangletop(\neg q\rightarrow q),&{\sim}\triangletop(q\rightarrow\neg q)
\end{matrix}\right\}
\end{align*}
\item\label{item:noloweredge} $\Gamma\models^\Dmc_{\Gsquareorder}q$ with $\Gamma=\left\{p,\neg p,\triangle p\leftrightarrow\neg\triangle p,{\sim}\triangletop(p\rightarrow\neg p),\triangletop(\neg p\rightarrow p)\right\}$.
\item\label{item:BRE} $p\models^\Dmc_{\Gsquareorder}q\vee{\sim}q$.
\end{enumerate}

The correspondence between statements \ref{item:nonparaconsistentfilter}--\ref{item:nonprimefilter} and entailment relations in this lemma can be established by the same argument as in Lemma~\ref{lemma:lowerboundpointgenerated01join}. Let us consider statements~\ref{item:no11}--\ref{item:BRE} in more detail.

We show that \ref{item:no11} holds iff $\Dmc\in\{\langle1,1\rangle^{\sharp\uparrow},\langle0^\circ,1^\circ\rangle^\uparrow,\langle0^\circ,1^\bullet\rangle^\uparrow\}$. First, if $\Dmc\in\{\langle0^\circ,1^\circ\rangle^\uparrow,\langle0^\circ,1^\bullet\rangle^\uparrow\}$, then $v(\phi\vee{\sim}\phi)\in\Dmc$ for every $v$ and every $\phi\in\LGsquareorder$ because $v_1(\phi\vee{\sim}\phi)>0$ and $v_2(\phi\vee{\sim}\phi)<1$. If $\Dmc=\langle1,1\rangle^{\sharp\uparrow}$ and $v[\Gamma]\in\Dmc$, we have that $v(\triangle p)=\langle1,1\rangle$ because $v_1(p)=1$. Moreover, we have that $v_1(q)=1$. This is because $v({\sim}\triangletop(p\rightarrow q))=v(\triangletop(q\rightarrow p))=\langle1,0\rangle$ (recall semantics of $\triangletop$ from~\eqref{equ:triangletop}) entails that $v(p)>_{[0,1]^{\Join}}v(q)$ and $v(\triangle q\leftrightarrow\neg\triangle q)\in\Dmc$ entails that $v(q)=\langle1,y_0\rangle$ or $v_2(q)=x_0$ for some $0\leq x_0<1$ and $1\geq y_0>0$. Thus, $v_2(p)<v_2(q)$. Now note that
\begin{align*}
v_2((q\vee\neg q)\vee{\sim}(q\vee\neg q))&=
\begin{cases}
0&\text{if }v_1(q)=1\text{ and }v_2(q)=1\\
v_2(q\vee\neg q)&\text{otherwise}
\end{cases}
\end{align*}
But as $v_1(q)=1$, $v_1((q\vee\neg q)\vee{\sim}(q\vee\neg q))=1$ as well. Moreover, since $v(\triangletop(\neg q\rightarrow q))=\langle1,0\rangle$ and $v({\sim}\triangletop(q\rightarrow\neg q))=\langle1,0\rangle$, $v_2(q\vee\neg q)=v_2(q)$. Thus, if $\Dmc=\langle1,1\rangle^{\sharp\uparrow}$, then it must be the case that $v((q\vee\neg q)\vee{\sim}(q\vee\neg q)\in\Dmc$ (i.e., \ref{item:no11} holds). On the other hand, if there is $\langle1,y_0\rangle\notin\Dmc$ s.t.\ $y_0<1$, then we can just evaluate $v_2(q)=y_0$ and have $v_2((q\vee\neg q)\vee{\sim}(q\vee\neg q))=y_0$ which falsifies \ref{item:no11} if $\Dmc\in\{\langle x^\circ,y^\circ\rangle^\uparrow,\langle y^\circ,x^\circ\rangle^\uparrow\}$. Moreover, by Lemma~\ref{lemma:conflationfiltersentailment}, we know that $\models^{\langle x,0\rangle^{\sharp\uparrow}}_{\Gsquareorder}$ coincides with $\models^{\langle 1,1-x\rangle^{\sharp\uparrow}}_{\Gsquareorder}$, $\models^{\langle x^\circ,1^\circ\rangle^\uparrow}_{\Gsquareorder}$ coincides with $\models^{\langle 0^\circ,(1-x)^\circ\rangle^\uparrow}_{\Gsquareorder}$, and $\models^{\langle x^\circ,1^\bullet\rangle^\uparrow}_{\Gsquareorder}$ coincides with $\models^{\langle 0^\bullet,(1-x)^\circ\rangle^\uparrow}_{\Gsquareorder}$. As in each of these cases, there is $\langle1,y_0\rangle\notin\Dmc$ s.t.\ $y_0<1$, \ref{item:no11} fails if $\Dmc\in\{\langle 0^\bullet,(1-x)^\circ\rangle^\uparrow,\langle x,0\rangle^{\sharp\uparrow},\langle 0^\circ,(1-x)^\circ\rangle^\uparrow\}$. Hence, by Lemma~\ref{lemma:conflationfiltersentailment}, it fails if $\Dmc\in\{\langle x,0\rangle^{\sharp\uparrow},\langle x^\circ,1^\circ\rangle^\uparrow,\langle x^\circ,1^\bullet\rangle^\uparrow\}$.

For~\ref{item:noloweredge}, one can see by the argument from the previous paragraph that there is~$v$ s.t.\ $v[\Gamma]\in\Dmc$ iff $\{\langle1,y\rangle,\langle y,1\rangle\}\subseteq\Dmc$ for some $1>y>0$. Indeed, in this case, the falsifying valuation will be, e.g., as follows: $v(p)=\langle1,y\rangle$, $v(q)=\langle0,1\rangle$. Finally, for~\ref{item:BRE}, we remark that $v_1(q\vee{\sim}q)$ can have any value in $(0,1]$ and $v_2(q\vee{\sim}q)$ can have any value in $[0,1)$. Thus, $p\models^\Dmc_{\Gsquareorder}q\vee{\sim}q$ holds iff $\Dmc\in\left\{\langle0^\circ,1^\circ\rangle^\uparrow,\langle0^\bullet,1^\circ\rangle^\uparrow,\langle0^\circ,1^\bullet\rangle^\uparrow\right\}$.

It can now be easily checked that the statements in the lists hold w.r.t.\ entailment relations as specified in the following table. We do not include~\ref{item:designatedfilter} as it holds iff $\Dmc=\{\langle1,0\rangle\}$ and thus fails for every entailment relation in this lemma.
\begin{center}
\begin{tabular}{c|ccccccc}
&P.I&P.II&P.III&P.IV&P.VI&P.VII&P.VIII\\\hline
$\models^{\langle x,0\rangle^{\sharp\uparrow}}_{\Gsquareorder}$&yes&yes&yes&yes&no&yes&no\\[1em]
$\models^{\langle1,1\rangle^{\sharp\uparrow}}_{\Gsquareorder}$&yes&yes&yes&yes&yes&yes&no\\[1em]
$\models^{\langle x^\circ,y^\circ\rangle^\uparrow}_{\Gsquareorder}$&yes&no&yes&yes&no&yes&no\\[1em]
$\models^{\langle y^\circ,x^\circ\rangle^\uparrow}_{\Gsquareorder}$&no&no&no&yes&no&yes&no\\[1em]
$\models^{\langle x^\circ,1^\circ\rangle^\uparrow}_{\Gsquareorder}$&yes&no&no&yes&no&yes&no\\[1em]
$\models^{\langle0^\circ,1^\bullet\rangle^\uparrow}_{\Gsquareorder}$&no&no&no&no&yes&no&yes\\[1em]
$\models^{\langle0^\circ,1^\circ\rangle^\uparrow}_{\Gsquareorder}$&no&no&no&yes&yes&yes&yes\\[1em]
$\models^{\langle x^\circ,1^\bullet\rangle^\uparrow}_{\Gsquareorder}$&no&no&no&no&no&no&no
\end{tabular}
\end{center}
\end{proof}

Let us now show that entailment relations induced by other non-point-generated filters coincide with one of those from Lemma~\ref{lemma:lowerboundnonpointgenerated01join}. We proceed similarly to Lemmas~\ref{lemma:thinpointgenerated01join} and~\ref{lemma:otherpointgenerated01join}.
\begin{lemma}\label{lemma:thinnonpointgenerated01join}
Let $\Dmc$ and $\Dmc'$ be two one-dimensional filters that \emph{do not contain neither $\langle1,1\rangle$ nor $\langle0,0\rangle$}. Then $\models^\Dmc_{\Gsquareorder}$ and $\models^{\Dmc'}_{\Gsquareorder}$ coincide.
\end{lemma}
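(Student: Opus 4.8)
The plan is to imitate the transport-of-counterexamples technique of Lemma~\ref{lemma:thinpointgenerated01join} and Theorem~\ref{theorem:orderisfilter01}, reducing everything to a single edge of $[0,1]^{\Join}$. First I would observe that, up to conflation, the filters meeting the hypothesis are the closed filters $\langle z,0\rangle^\uparrow$ and the open filters $\langle z,0\rangle^{\sharp\uparrow}$ with $0<z<1$: by Lemma~\ref{lemma:conflationfiltersentailment} every one-dimensional filter on the $\langle1,1\rangle$--$\langle1,0\rangle$ edge is conflation-equivalent to one on the $\langle0,0\rangle$--$\langle1,0\rangle$ edge, and Lemma~\ref{lemma:thinpointgenerated01join} already shows that all the closed ones induce a single relation. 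Hence it suffices to prove that for $0<z,z'<1$ the open filter $\langle z,0\rangle^{\sharp\uparrow}$ and the closed filter $\langle z',0\rangle^\uparrow$ induce the same entailment; transitivity with Lemma~\ref{lemma:thinpointgenerated01join} then collapses the whole family.

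For each inclusion I would start from a valuation $v$ witnessing a failure, build a strictly increasing $h\colon[0,1]\to[0,1]$ with $h(0)=0$, $h(1)=1$, set $v^\Updownarrow_i(p)=h(v_i(p))$, and invoke the induction $v^\Updownarrow(\phi)=\langle h(v_1(\phi)),h(v_2(\phi))\rangle$ for all $\phi\in\LGsquareorder$, which is verbatim the one in Lemma~\ref{lemma:thinpointgenerated01join} (the only operation-specific cases being $\neg$, swapping the two coordinates, and $\rightarrow,\coimplies$, handled through $h(0)=0$, $h(1)=1$ and strict monotonicity). Two remarks make the membership bookkeeping routine: on a premise set, $v_2[\Gamma]=0$ forces $v_2(\phi)=0$ for every $\phi\in\Gamma$, so $v^\Updownarrow_2[\Gamma]=0$, while $\inf_\phi h(v_1(\phi))\geq h(v_1[\Gamma])$ by monotonicity; and on the single conclusion $\chi$ the value is transformed exactly as $\langle h(v_1(\chi)),h(v_2(\chi))\rangle$. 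Thus in the direction ‘$\langle z,0\rangle^{\sharp\uparrow}$ fails $\Rightarrow\langle z',0\rangle^\uparrow$ fails’ I would exploit $v_1[\Gamma]>z$ and choose $h$ with $h(z)<z'\leq h(v_1[\Gamma])$, so the premises land in $\langle z',0\rangle^\uparrow$ and a conclusion with $v_1(\chi)\leq z$ is sent strictly below $z'$; in the converse direction I would instead exploit $v_1(\chi)<z'$ and choose $h$ with $h(v_1(\chi))\leq z<h(z')$, pushing the premise-infimum strictly above $z$ while keeping the conclusion out of $\langle z,0\rangle^{\sharp\uparrow}$. Conclusions that fail through the second coordinate ($v_2(\chi)>0$) are preserved automatically, since $h$ is strictly increasing away from $0$.

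The delicate point — and the reason the interior hypothesis $0<z,z'<1$ is essential — is the mismatch between the \emph{strict} membership of the open filter and the \emph{non-strict} membership of the closed one at the generating edge. In each direction exactly one of the two relevant inequalities (the premise-infimum in one case, the conclusion value in the other) is strict, and this is precisely the slack that lets a strictly monotone $h$ fixing $0$ and $1$ straddle the threshold without identifying two distinct values. This slack vanishes at a corner: for $z=0$ a strictly increasing $h$ with $h(0)=0$ can never collapse a positive conclusion value down to $0$, so $\langle0,0\rangle^{\sharp\uparrow}$ (equivalently, by conflation, $\langle1,1\rangle^{\sharp\uparrow}$) does \emph{not} merge with the interior filters — consistent with their appearing as separate relations in Lemma~\ref{lemma:lowerboundnonpointgenerated01join}. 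I therefore expect the main care to go into specifying $h$ so that it respects the edge threshold on both coordinates simultaneously and interacts correctly with the infimum and supremum defining $v[\Gamma]$, rather than into any genuinely new structural argument.
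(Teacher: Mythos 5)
Your argument is correct and runs on the same engine as the paper's proof: use Lemma~\ref{lemma:conflationfiltersentailment} to collapse the two edges of $[0,1]^{\Join}$ into one, then transport a countermodel through a strictly order-preserving $h$ fixing $0$ and $1$, justified by the induction $v^\Updownarrow(\phi)=\langle h(v_1(\phi)),h(v_2(\phi))\rangle$ exactly as in Lemma~\ref{lemma:thinpointgenerated01join}. The decomposition, however, is genuinely different. The paper compares two \emph{open} filters directly (it works on the other edge, showing $\models^{\langle1,y\rangle^{\sharp\uparrow}}_{\Gsquareorder}$ and $\models^{\langle1,y'\rangle^{\sharp\uparrow}}_{\Gsquareorder}$ coincide by sending $y\mapsto y'$ and $v_2[\Gamma]=z\mapsto z'<y'$), which is the more symmetric comparison; you instead compare an open filter $\langle z,0\rangle^{\sharp\uparrow}$ against a \emph{closed} one $\langle z',0\rangle^\uparrow$ and close the argument by transitivity through Lemma~\ref{lemma:thinpointgenerated01join}. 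Your route is slightly more delicate — in each direction you must exploit the single strict inequality available (premise-infimum in one direction, conclusion value in the other) to make $h$ straddle the threshold, and your constraints on $h$ should only be imposed in the sub-case where the conclusion fails through the first coordinate — but it buys you part~1 of Lemma~\ref{lemma:nonpointgeneratedcoincidence} for free, which the paper proves separately later. A further point in your favour: your insistence on $0<z<1$ makes explicit that the hypothesis must be read as excluding $\langle1,1\rangle^{\sharp\uparrow}$ and $\langle0,0\rangle^{\sharp\uparrow}$; these contain neither $\langle1,1\rangle$ nor $\langle0,0\rangle$ as elements, yet induce a distinct relation by Lemma~\ref{lemma:lowerboundnonpointgenerated01join}, so the statement's literal wording is looser than intended and your reading (filters subsumed by a one-dimensional filter generated by an interior point of the edge) is the correct one.
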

\begin{proof}
The proof is similar to that of Lemma~\ref{lemma:thinpointgenerated01join}. By Lemma~\ref{lemma:conflationfiltersentailment}, it suffices to prove that for every $y,y'\in(0,1)$, $\models^{\langle1,y\rangle^{\sharp\uparrow}}_{\Gsquareorder}$ and $\models^{\langle1,y'\rangle^{\sharp\uparrow}}_{\Gsquareorder}$ coincide. Let $\Gamma\not\models^{\langle1,y\rangle^{\sharp\uparrow}}_{\Gsquareorder}\chi$. Then, there is a~$\Gsquare$-valuation $v$ s.t.\ $v[\Gamma]\in\langle1,y\rangle^{\sharp\uparrow}$ and $v(\chi)\notin\langle1,y\rangle^{\sharp\uparrow}$. Thus, $v_1[\Gamma]=1$ and $v_2[\Gamma]=z<y$ but either $v_1(\chi)<1$ or $v_2(\chi)\geq y$. We assume w.l.o.g.\ that $v_2(\chi)<1$\footnote{If $v_2(\chi)=1$, $v$ witnesses $\Gamma\not\models^{\langle1,y\rangle^{\sharp}}_{\Gsquareorder}\chi$.} and construct a~valuation $v^\Updownarrow$ that will witness $\Gamma\not\models^{\langle1,y'\rangle^{\sharp\uparrow}}_{\Gsquareorder}\chi$. For that, pick $z'<y'$ and a~function $h^\sharp$ s.t.\
\begin{itemize}
\item $h^\sharp(y)=y'$, $h^\sharp(0)=0$, $h^\sharp(1)=1$, $h^\sharp(z)=z'$;
\item $x\leq x'$ iff $h^\sharp(x)\leq h^\sharp(x')$.
\end{itemize}
Again, just as in Lemma~\ref{lemma:thinpointgenerated01join}, we define $v^\Updownarrow(p)=\langle v^\Updownarrow_1(p),v^\Updownarrow_2(p)\rangle$ with $v^\Updownarrow_i(p)=h^\sharp(v_i(p))$. It now suffices to prove by induction on $\phi\in\LGsquareorder$ that $v^\Updownarrow(\phi)=\langle h^\sharp(v_1(\phi)),h^\sharp(v_2(\phi))\rangle$ for every~$\phi$. This can be done in the same way as in Lemma~\ref{lemma:thinpointgenerated01join}.
\end{proof}
\begin{lemma}\label{lemma:othernonpointgenerated01join}
Let $1>\max(x,x')\geq\min(x,x')>\max(y,y')\geq\min(y,y')>0$. Then the following entailment relations coincide.
\begin{enumerate}
\item Any two relations $\models^\Dmc_{\Gsquareorder}$, $\models^{\Dmc'}_{\Gsquareorder}$, and $\models^{\Dmc''}_{\Gsquareorder}$ s.t.\ $\Dmc\!\in\!\{\langle x^\circledast\!,y^\odot\rangle^\uparrow\!\mid\circledast,\!\odot\!\in\!\{\circ,\!\bullet\}\}$, $\Dmc'\!\in\!\{\langle x'^\circledast\!,y'^\odot\rangle^\uparrow\!\mid\circledast,\!\odot\!\in\!\{\circ,\!\bullet\}\}$, and $\Dmc''\!\in\!\{\langle x''^\circledast\!,x''^\odot\rangle^\uparrow\!\mid\circledast,\!\odot\!\in\!\{\circ,\!\bullet\}\}$
are non-point-generated filters on $[0,1]^{\Join}$.
\item Any two relations $\models^\Dmc_{\Gsquareorder}$ and $\models^{\Dmc'}_{\Gsquareorder}$ s.t.\ $\Dmc\in\{\langle y^\circledast\!,x^\odot\rangle^\uparrow\mid\circledast,\!\odot\!\in\!\{\circ,\!\bullet\}\}$ and $\Dmc'\in\{\langle y'^\circledast\!,x'^\odot\rangle^\uparrow\mid\circledast,\!\odot\!\in\!\{\circ,\!\bullet\}\}$
are non-point-generated filters on $[0,1]^{\Join}$.
\item Any two relations $\models^{\langle x^\circledast,1^\circ\rangle^\uparrow}_{\Gsquareorder}$ and $\models^{\langle x'^\circledast,1^\circ\rangle^\uparrow}_{\Gsquareorder}$ with $\circledast\in\{\circ,\bullet\}$.
\item Any two relations $\models^{\langle x^\circ,1^\bullet\rangle^\uparrow}_{\Gsquareorder}$ and $\models^{\langle x'^\circ,1^\bullet\rangle^\uparrow}_{\Gsquareorder}$.
\end{enumerate}
\end{lemma}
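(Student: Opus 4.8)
The plan is to reuse the ``transport along an order-isomorphism'' technique of Lemma~\ref{lemma:thinpointgenerated01join} and Lemma~\ref{lemma:otherpointgenerated01join}, but to choose the isomorphism \emph{per witness}. Recall its engine: for any order-isomorphism $H$ of $[0,1]$ with $H(0)=0$, $H(1)=1$, setting $v^\Updownarrow_i(p)=H(v_i(p))$ gives $v^\Updownarrow(\phi)=\langle H(v_1(\phi)),H(v_2(\phi))\rangle$ for every $\phi\in\LGsquareorder$ (the induction is verbatim as before, the only fact used being that $H$ preserves strict order, cf.~Remark~\ref{rem:whyistheproofok}), and since $H$ is continuous it commutes with $\inf$ and $\sup$, so $v^\Updownarrow[\Gamma]=\langle H(v_1[\Gamma]),H(v_2[\Gamma])\rangle$. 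By Lemma~\ref{lemma:conflationfiltersentailment} I may use conflation to halve the cases, and since coincidence of entailment relations is transitive I am free to route through auxiliary filters with freely chosen generating points. For the comparisons in which the \emph{order type} of the two thresholds is preserved, the argument is literally that of Lemma~\ref{lemma:otherpointgenerated01join}: write the boxes with first/second thresholds $\langle x,y\rangle$ and $\langle x',y'\rangle$ and take a single $H$ with $H(x)=x'$, $H(y)=y'$ (possible because $x<y\Leftrightarrow x'<y'$ and $x>y\Leftrightarrow x'>y'$). This already settles all of part~2 (paraconsistent boxes, no diagonal present) and all off-diagonal-to-off-diagonal moves of part~1.

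The genuinely new phenomenon, and the main obstacle, is the merge in part~1 of a \emph{diagonal} box $\langle x''^{\circledast},x''^{\odot}\rangle^\uparrow$ (thresholds equal) with an \emph{off-diagonal} one $\langle x^{\circledast},y^{\odot}\rangle^\uparrow$ with $x>y$. A single generic $H$, applied to both coordinates (it \emph{must} be the same $H$, since $\neg$ swaps coordinates), cannot map $(z,1]\times[0,z)$ onto $(x,1]\times[0,y)$ with $x>y$, because $H(z)$ would have to be simultaneously $\ge x$ and $\le y$. This is exactly the continuous obstruction of Remark~\ref{rem:whyistheproofok} that keeps the \emph{point-generated} diagonal filter distinct from the off-diagonal one.

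The resolution is to split on \emph{which coordinate realises the failure} $v(\chi)\notin\Dmc$ and to spend the \emph{strict} slack supplied by the punctured boundaries. One direction is free: passing from the off-diagonal box to the diagonal one, a first-coordinate failure is handled by $H(x)=z$ (the bound $v_2[\Gamma]<y<x$ then forces $v^\Updownarrow_2[\Gamma]<z$ automatically) and a second-coordinate failure dually by $H(y)=z$. The hard direction is diagonal $\to$ off-diagonal, where a gap between the two thresholds must be \emph{created}. Here, because the generating edges are removed, the witness satisfies $v_1[\Gamma]>z$ and $v_2[\Gamma]<z$ \emph{strictly}; I would treat a first-coordinate failure ($v_1(\chi)\le z$) by choosing $H$ with $H(z)=y$ and steepening $H$ just above $z$, so that everything below $z$ lands below $y$ while the strict slack $v_1[\Gamma]>z$ pushes $v^\Updownarrow_1[\Gamma]$ past $x$; a second-coordinate failure ($v_2(\chi)\ge z$) is dual, with $H(z)=x$ and $H$ steepened just below $z$ to drop $v^\Updownarrow_2[\Gamma]$ below $y$ via the slack $v_2[\Gamma]<z$. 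Thus each of the two independent strict gaps is ``spent'' on one of the two target thresholds.

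The one delicate bookkeeping point is a closed source edge (a $\bullet$) meeting an open target edge (a $\circ$) when the corresponding $\inf$ or $\sup$ is attained exactly on that edge; a direct anchoring $H(x)=x'$ would then land on the forbidden boundary. I would sidestep this by transitivity, routing through an auxiliary target whose relevant generating coordinate is taken strictly inside the admissible range, so that the closed source edge maps strictly past the open target edge, and then closing the chain. With this device the $\circ/\bullet$ toggles of part~1 and of part~3 are absorbed into the same scheme, and parts~3 and~4 (second threshold pinned at $1$, hence the second coordinate essentially unconstrained) reduce almost entirely to Lemma~\ref{lemma:thinpointgenerated01join}. In every case the verification $v^\Updownarrow(\phi)=\langle H(v_1(\phi)),H(v_2(\phi))\rangle$ is the same routine induction as in Lemma~\ref{lemma:thinpointgenerated01join}, so I would only record the choice of $H$ in each sub-case and refer back for the check.
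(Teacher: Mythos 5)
Your proposal is correct and follows essentially the same route as the paper's proof: transporting the witness valuation along a \emph{per-witness} order-automorphism of $[0,1]$ fixing $0$ and $1$ (applied identically to both coordinates), with Lemma~\ref{lemma:conflationfiltersentailment} and transitivity of coincidence absorbing the symmetric and $\circ/\bullet$ variants. Your explicit split on which coordinate realises $v(\chi)\notin\Dmc$, and the use of the strict slack at punctured boundaries for the diagonal-to-off-diagonal merge, is exactly what the paper encodes implicitly in its choice of target points $\langle x'_0,y'_0\rangle\in\Dmc'$ and $\langle x'_1,y'_1\rangle\notin\Dmc'$ subject to the order-type constraints~\eqref{equ:xynonpointgeneratedcondition}.
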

\begin{proof}
All parts of the lemma can be proven in a~similar manner, so we only consider the first one. The proof is close to that of Lemma~\ref{lemma:otherpointgenerated01join} but we need to slightly modify the conditions on $h$. Let $\Dmc$, $\Dmc'$, and $\Dmc''$ be as in~1 and let further $\Gamma\not\models^\Dmc_{\Gsquareorder}\chi$. Hence, there is a~$\Gsquareorder$-valuation $v$ s.t.\ $v[\Gamma]=\langle x_0,y_0\rangle$ for some $\langle x_0,y_0\rangle\in\Dmc$ and $v(\chi)=\langle x_1,y_1\rangle$ with $\langle x_1,y_1\rangle\notin\Dmc$. Now pick two points: $\langle x'_0,y'_0\rangle\in\Dmc$ and $\langle x'_1,y'_1\rangle\notin\Dmc'$ s.t.\ $1>x'_0>x'_1$ and $0<y'_0<y'_1$, and
\begin{align}
x'_0=0&\text{ iff }x_0=0&
y'_0=0&\text{ iff }y_0=0&
x'_0=1&\text{ iff }x_0=1&
y'_0=1&\text{ iff }y_0=1\nonumber\\
x'_1=0&\text{ iff }x_1=0&
y'_1=0&\text{ iff }y_1=0&
x'_1=1&\text{ iff }x_1=1&
y'_1=1&\text{ iff }y_1=1\nonumber\\
x'_1\leq x'_0&\text{ iff }x_0\leq x_1&y'_1\leq y'_0&\text{ iff }y_1\leq y_0&
x'_0\leq y'_0&\text{ iff }x_0\leq y_0&
x'_1\leq y'_1&\text{ iff }x_1\leq y_1
\label{equ:xynonpointgeneratedcondition}
\end{align}
Now consider a~function $h^\sharp$ with the following properties:
\begin{itemize}
\item $h^\sharp(0)=0$, $h^\sharp(1)=1$, $h^\sharp(x_0)=x'_0$, $h^\sharp(y_0)=y'_0$, $h^\sharp(x_1)=x'_1$, $h^\sharp(y_1)=y'_1$;
\item $z\leq z'$ iff $h^\sharp(z)\leq h^\sharp(z')$.
\end{itemize}
Notice that $\langle h^\sharp(x_0),h^\sharp(y_0)\rangle\in\Dmc'$ while $\langle h^\sharp(x_1),h^\sharp(y_1)\rangle\notin\Dmc'$. We, define $v^\Updownarrow=\langle v^\Updownarrow_1,v^\Updownarrow_2\rangle$ with $v^\Updownarrow_i(p)=h^\sharp(v_i(p))$. It suffices to prove by induction on $\phi\in\LGsquareorder$ that $v^\Updownarrow(\phi)=\langle h^\sharp(v_1(\phi)),h^\sharp(v_2(\phi))\rangle$. This can be done in the same way as in Lemma~\ref{lemma:thinpointgenerated01join}. It is clear that $v^\Updownarrow$ witnesses $\Gamma\not\models^{\Dmc'}_{\Gsquareorder}\chi$. Note that the converse direction (from $\Gamma\not\models^{\Dmc'}_{\Gsquareorder}\chi$ to $\Gamma\not\models^\Dmc_{\Gsquareorder}\chi$) can be obtained in a~symmetric fashion. Moreover, one can show in the same manner that $\models^\Dmc_{\Gsquareorder}$ and $\models^{\Dmc''}_{\Gsquareorder}$ coincide since there is no $z$ s.t.\ $\langle z,z\rangle\in\Dmc''$ or $\langle z,z\rangle\in\Dmc$.

Other parts of this lemma can be proven in the same manner. Namely, we pick a~strictly order-preserving function $h^\sharp$ that puts $\langle h^\sharp(v_1[\Gamma]),h^\sharp(v_2[\Gamma])\rangle$ into $\Dmc'$ and $\langle h^\sharp(v_1(\chi)),h^\sharp(v_2(\chi))\rangle$ outside of it.
\end{proof}

We can now obtain an analogue of Theorem~\ref{theorem:7pointgeneratedfilters01join} for non-point-generated filters.
\begin{theorem}\label{theorem:8nonpointgeneratedfilters01join}
There are exactly eight entailment relations in $\Gsquareorder$ that are induced by \emph{non-point-generated} filters on $[0,1]^{\Join}$.
\end{theorem}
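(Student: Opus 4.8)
The plan is to follow the blueprint of Theorem~\ref{theorem:7pointgeneratedfilters01join}: Lemma~\ref{lemma:lowerboundnonpointgenerated01join} already yields \emph{at least} eight pairwise distinct entailment relations, so it remains to prove the matching upper bound by giving an exhaustive classification of the non-point-generated filters on $[0,1]^{\Join}$ and then collapsing each class onto one of the eight representatives via the reduction lemmas.

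For the classification I would first record the clean structural fact that, since $[0,1]^{\Join}$ is (as a lattice under the truth order) the product of the chain $[0,1]$ with the reversed chain, every filter on it is a product $F_x\times F_y$ of a filter $F_x$ on the first chain and a filter $F_y$ on the second. As filters on $[0,1]$ are exactly the sets $[x,1]$ and $(x,1]$, this gives $\Dmc=F_x\times F_y$ with $F_x\in\{[x,1],(x,1]\}$ and $F_y\in\{[0,y],[0,y)\}$, recovering precisely the point-generated filters (both factors closed, so $\bigwedge\Dmc\in\Dmc$) together with the families of Remark~\ref{rem:nonpointgeneratedfilters01Join} (at least one factor open). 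I would then partition the non-point-generated filters into eight classes according to the position of the base point $\langle x,y\rangle$ and the sign of $x-y$: one-dimensional with interior base point; one-dimensional with corner base point ($\langle0,0\rangle^{\sharp\uparrow}$ or $\langle1,1\rangle^{\sharp\uparrow}$); two-dimensional interior non-paraconsistent ($x\geq y$) and paraconsistent ($x<y$); two-dimensional on the edge $y=1$ (equivalently $x=0$ up to conflation) according to whether $\langle1,1\rangle$ survives the boundary removal; and the three corner-to-corner filters $\langle0^\circ,1^\circ\rangle^\uparrow$, $\langle0^\circ,1^\bullet\rangle^\uparrow$, $\langle0^\bullet,1^\circ\rangle^\uparrow$.

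Next I would collapse each class onto its representative. The two-dimensional interior and edge classes are handled directly by the four parts of Lemma~\ref{lemma:othernonpointgenerated01join}, noting that once a boundary is removed the diagonal case $x=y$ becomes non-paraconsistent and so falls under part~1 (representative $\langle x^\circ,y^\circ\rangle^\uparrow$). The interior one-dimensional class collapses to $\models^{\langle x,0\rangle^{\sharp\uparrow}}_{\Gsquareorder}$ by Lemma~\ref{lemma:thinnonpointgenerated01join} combined with conflation (Lemma~\ref{lemma:conflationfiltersentailment}); the corner one-dimensional class collapses to $\models^{\langle1,1\rangle^{\sharp\uparrow}}_{\Gsquareorder}$, where $\langle0,0\rangle^{\sharp\uparrow}$ and $\langle1,1\rangle^{\sharp\uparrow}$ are identified by conflation. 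Finally, Lemma~\ref{lemma:conflationfiltersentailment} reduces every left-edge ($x=0$) two-dimensional filter to its $y=1$ counterpart and identifies $\langle0^\bullet,1^\circ\rangle^\uparrow$ with $\langle0^\circ,1^\bullet\rangle^\uparrow$, leaving exactly the eight representatives of Lemma~\ref{lemma:lowerboundnonpointgenerated01join}.

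The main obstacle, and the point requiring the most care, is to keep the boundary behaviour straight so that classes are neither spuriously merged nor overlooked. One must \emph{not} merge the interior one-dimensional class with the corner one-dimensional class even though every filter in both is one-dimensional and contains neither $\langle0,0\rangle$ nor $\langle1,1\rangle$: the difference lies in whether the infimum of the filter is an interior edge point or a value-prime corner, and Lemma~\ref{lemma:thinnonpointgenerated01join} applies only to the interior case, since its order-preserving witness $h^\sharp$ cannot transport an interior base point to a corner. Likewise the passage from the value-prime $\langle x^\circ,1^\bullet\rangle^\uparrow$ (which contains $\langle1,1\rangle$) to $\langle x^\circ,1^\circ\rangle^\uparrow$ (which does not) must be blocked. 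That these genuinely distinct classes are never conflated is exactly what the separating statements of Lemma~\ref{lemma:lowerboundnonpointgenerated01join}, in particular~P.VI, guarantee; once the boundary subtleties are respected, the eight classes reduce to eight distinct representatives and the count is exact.
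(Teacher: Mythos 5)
Your proposal is correct and follows the same two-step skeleton as the paper's proof: the lower bound is Lemma~\ref{lemma:lowerboundnonpointgenerated01join}, and the upper bound is an exhaustive classification of non-point-generated filters collapsed onto the eight representatives via Lemmas~\ref{lemma:conflationfiltersentailment}, \ref{lemma:thinnonpointgenerated01join} and~\ref{lemma:othernonpointgenerated01join}. Where you genuinely differ is in how exhaustiveness of the classification is justified. The paper sorts the filters directly by primeness, dimension, and the presence of diagonal points $\langle z,z\rangle$; you instead first establish the structural fact that $[0,1]^{\Join}$ under the truth order is the product of the chain $[0,1]$ with its order-dual, so that every filter is the product of its two projections, $\Dmc=F_x\times F_y$ with $F_x\in\{[x,1],(x,1]\}$ and $F_y\in\{[0,y],[0,y)\}$, and then partition by the position of the base point and the sign of $x-y$. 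This decomposition is correct (upward closure together with closure under componentwise meets yields $F_x\times F_y\subseteq\Dmc$) and has the advantage of making it transparent that Remark~\ref{rem:nonpointgeneratedfilters01Join} really lists \emph{all} non-point-generated filters, a completeness claim the paper asserts without argument; the two partitions then agree, since a non-point-generated $\langle x^\circledast,y^\odot\rangle^\uparrow$ is paraconsistent exactly when $x<y$ (so the diagonal base case $x=y$ lands in the non-paraconsistent class, as you note). Your boundary caveat is also well taken and in fact sharpens the paper: read literally, the hypothesis of Lemma~\ref{lemma:thinnonpointgenerated01join} (one-dimensional, containing neither $\langle1,1\rangle$ nor $\langle0,0\rangle$) is satisfied by $\langle1,1\rangle^{\sharp\uparrow}$ itself, which would wrongly merge the corner class with the interior one-dimensional class and contradict~\ref{item:no11} of Lemma~\ref{lemma:lowerboundnonpointgenerated01join}; as you observe, the strictly order-preserving witness $h^\sharp$ cannot move an interior base point to a corner, so the lemma's actual scope is the interior case and the corner filters $\langle1,1\rangle^{\sharp\uparrow}$ and $\langle0,0\rangle^{\sharp\uparrow}$ form a separate class identified only by conflation.
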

\begin{proof}
The proof is essentially the same as that of Theorem~\ref{theorem:7pointgeneratedfilters01join}. By Lemma~\ref{lemma:lowerboundnonpointgenerated01join}, there are at least eight entailment relations induced by non-point-generated filters. We show that every entailment induced by a~non-point-generated filter coincides with one from Lemma~\ref{lemma:lowerboundnonpointgenerated01join}. Again, we begin by observing that the following classification is complete.
\begin{enumerate}
\item Prime\footnote{Note that there are only two \emph{one-dimensional} prime filters on $[0,1]^{\Join}$ --- $\langle1,1\rangle^\uparrow$ and $\langle0,0\rangle^\uparrow$ --- but they are point-generated.} filters:
\begin{enumerate}
\item there are some $x<1$, $y>0$ s.t.\ $\langle x,y\rangle\notin\Dmc$ (i.e., $\Dmc\in\{\langle x^\circ,1^\bullet\rangle\mid0<x<1\}\cup\{\langle0^\bullet,y^\circ\rangle\mid1>y>0\}$);
\item $\langle x,y\rangle\in\Dmc$ for every $\langle x,y\rangle$ with $x,y\in(0,1)$ (i.e., $\Dmc\in\{\langle0^\circ,1^\bullet\rangle^\uparrow,\langle0^\bullet,1^\circ\rangle^\uparrow\}$).
\end{enumerate}
\item Non-prime filters:
\begin{enumerate}
\item one-dimensional filters:
\begin{enumerate}
\item there is some one-dimensional non-prime point-generated filter $\Dmc'$ s.t.\ $\Dmc\subsetneq\Dmc'$ (i.e., $\Dmc\in\{\langle x,0\rangle^{\sharp\uparrow}\mid x>0\}\cup\{\langle1,y\rangle^{\sharp\uparrow}\mid y<1\}$);
\item there is no one-dimensional non-prime filter $\Dmc'$ s.t.\ $\Dmc\subsetneq\Dmc'$ (i.e., $\Dmc\in\{\langle1,1\rangle^{\sharp\uparrow},\langle0,0\rangle^{\sharp\uparrow}\}$);
\end{enumerate}
\item two-dimensional filters:
\begin{enumerate}
\item there is no $x'$ s.t.\ $\langle x',x'\rangle\in\Dmc$ (i.e., $\Dmc\in\{\langle x^\circ,y^\circ\rangle^\uparrow\mid1>x\geq y>0\}\cup\{\langle x^\bullet,y^\circ\rangle^\uparrow\mid1>x>y>0\}\cup\{\langle x^\circ,y^\bullet\rangle^\uparrow\mid1>x>y>0\}$);
\item there are (uncountably infinitely many) $x'$'s s.t.\ $\langle x',x'\rangle\in\Dmc$:
\begin{enumerate}
\item $\Dmc$ is contained in some \emph{non-prime} point-generated filter (i.e., $\Dmc\in\{\langle y^\circ,x^\circ\rangle^\uparrow\mid1>x\geq y>0\}\cup\{\langle y^\bullet,x^\circ\rangle^\uparrow\mid1>x\geq y>0\}\cup\{\langle y^\circ,x^\bullet\rangle^\uparrow\mid1>x\geq y>0\}$);
\item $\Dmc'$ is not contained in any \emph{non-prime} point-generated filter but is contained in some \emph{prime} point-generated filter (i.e., $\Dmc\in\{\langle x^\circledast,1^\circ\rangle^\uparrow\mid0>x>1,\circledast\in\{\circ,\bullet\}\}\cup\{\langle1^\circ,y^\circledast\rangle^\uparrow\mid1<y<0,\circledast\in\{\circ,\bullet\}\}$);
\item $\Dmc$ is not contained in \emph{any} point-generated filter (i.e., $\Dmc=\langle0^\circ,1^\circ\rangle^\uparrow$).
\end{enumerate}
\end{enumerate}
\end{enumerate}
\end{enumerate}

By Lemmas~\ref{lemma:conflationfiltersentailment}, \ref{lemma:thinnonpointgenerated01join}, and~\ref{lemma:othernonpointgenerated01join}, we have that
\begin{itemize}
\item all filters in the class 1.(a) induce $\models^{\langle{\sfrac{1}{2}}^\circ,1^\bullet\rangle^\uparrow}_{\Gsquareorder}$;
\item all filters in the class 1.(b) induce $\models^{\langle0^\circ,1^\bullet\rangle^\uparrow}_{\Gsquareorder}$;
\item all filters in the class 2.(a)i induce $\models^{\langle1,\sfrac{1}{2}\rangle^{\sharp\uparrow}}_{\Gsquareorder}$;
\item all filters in the class 2.(a)ii induce $\models^{\langle1,1\rangle^{\sharp\uparrow}}_{\Gsquareorder}$;
\item all filters in the class 2.(b)i induce $\models^{\langle{\sfrac{2}{3}}^\circ,{\sfrac{1}{3}}^\circ\rangle^\uparrow}_{\Gsquareorder}$;
\item all filters in the class 2.(b)ii.A induce $\models^{\langle{\sfrac{1}{3}}^\circ,{\sfrac{2}{3}}^\circ\rangle^\uparrow}_{\Gsquareorder}$;
\item all filters in the class 2.(b)ii.B induce $\models^{\langle{\sfrac{1}{3}}^\circ,1^\circ\rangle^\uparrow}_{\Gsquareorder}$;
\item the only filter in the class 2.(b)ii.C induces $\models^{\langle0^\circ,1^\circ\rangle}_{\Gsquareorder}$.
\end{itemize}
The result follows.
\end{proof}
\subsection{Reductions of entailment relations}
By Theorems~\ref{theorem:7pointgeneratedfilters01join} and~\ref{theorem:8nonpointgeneratedfilters01join}, there are at least eight and at most fifteen entailment relations induced by filters on $[0,1]^{\Join}$. In this section, we establish which entailment relations induced by \emph{point-generated} filters coincide with the ones induced by \emph{non-point-generated} filters. In addition, we will construct reductions of entailment relations induced by point-generated filters to $\models^\leq_{\Gsquareorder}$ and establish a~hierarchy similar to the one presented in Theorem~\ref{theorem:invGhierarchy}.

We begin with observing that $\models^{\langle x,y\rangle^\uparrow}_{\Gsquareorder}$ coincides with $\models^{\langle x^\circ,y^\circ\rangle^\uparrow}_{\Gsquareorder}$ when $x\neq y$.
\begin{lemma}\label{lemma:nonpointgeneratedcoincidence}
Let $1>x>y>0$. The following entailment relations coincide.
\begin{align*}
1.\models^{\langle x,0\rangle^\uparrow}_{\Gsquareorder}\text{ and }\models^{\langle x,0\rangle^{\sharp\uparrow}}_{\Gsquareorder}&&
2.\models^{\langle x,y\rangle^\uparrow}_{\Gsquareorder}\text{ and }\models^{\langle x^\circ,y^\circ\rangle^\uparrow}_{\Gsquareorder}&&
3.\models^{\langle y,x\rangle^\uparrow}_{\Gsquareorder}\text{ and }\models^{\langle y^\circ,x^\circ\rangle^\uparrow}_{\Gsquareorder}
\end{align*}
\end{lemma}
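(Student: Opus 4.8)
We need to show three pairs of filter-induced entailment relations coincide, where each pair matches a point-generated filter with a structurally similar non-point-generated one: removing the "boundary lines" $\{x'=x\}$ and $\{y'=y\}$ from $\langle x,y\rangle^\uparrow$ (when $x\neq y$, i.e. the boundary does not touch the paraconsistent diagonal point $\langle x,x\rangle$) does not change the induced entailment.

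Let me sketch the proof.
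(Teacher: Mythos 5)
There is no proof here: after restating the claim, the proposal ends at ``Let me sketch the proof'' without supplying any argument. Nothing in the text constructs a valuation, invokes an earlier lemma, or explains why deleting the boundary lines leaves the entailment unchanged, so the entire content of the lemma remains to be established. The one substantive sentence --- that for $x\neq y$ the removed boundary does not touch the diagonal --- is not the crux (and is not quite accurate for item~3, where $\langle y,x\rangle^\uparrow$ does contain diagonal points $\langle z,z\rangle$ with $y\leq z\leq x$, two of which lie on the removed boundary); what actually has to be shown is that a countermodel for one relation can be transformed into a countermodel for the other.

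Concretely, the missing argument runs as follows (this is the paper's route). Given $v$ witnessing $\Gamma\not\models^{\langle x,y\rangle^\uparrow}_{\Gsquareorder}\chi$ with $v[\Gamma]=\langle x_0,y_0\rangle$ and $v(\chi)=\langle x_1,y_1\rangle$, one chooses target points $\langle x'_0,y'_0\rangle\in\langle x^\circ,y^\circ\rangle^\uparrow$ and $\langle x'_1,y'_1\rangle\notin\langle x^\circ,y^\circ\rangle^\uparrow$ satisfying the order- and endpoint-preservation constraints of~\eqref{equ:xynonpointgeneratedcondition}, takes a strictly order-preserving $h^\sharp:[0,1]\rightarrow[0,1]$ fixing $0$ and $1$ and sending $x_i\mapsto x'_i$, $y_i\mapsto y'_i$, and proves by induction on $\phi\in\LGsquareorder$ that the rescaled valuation $v^\Updownarrow_i(p)=h^\sharp(v_i(p))$ satisfies $v^\Updownarrow(\phi)=\langle h^\sharp(v_1(\phi)),h^\sharp(v_2(\phi))\rangle$; this induction is where the semantics of $\rightarrow$, $\coimplies$, $\neg$, etc.\ enter and cannot be skipped. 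The converse direction has an additional subtlety your sketch gives no hint of: a witness against $\models^{\langle x^\circ,y^\circ\rangle^\uparrow}_{\Gsquareorder}$ may place $v(\chi)$ on the boundary of $\langle x,y\rangle^\uparrow$ (hence still inside the closed filter), so one must first replace $\langle x,y\rangle^\uparrow$ by an equivalent filter $\langle\frac{1+x}{2},\frac{y}{2}\rangle^\uparrow$ (legitimate by Lemma~\ref{lemma:otherpointgenerated01join}) before choosing $h^\sharp$, so that the image of $v(\chi)$ lands strictly outside. Without these steps the lemma is asserted, not proved.
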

\begin{proof}
The proof is essentially the same as that of Lemmas~\ref{lemma:thinnonpointgenerated01join} (for~1) and~\ref{lemma:othernonpointgenerated01join} (for~2 and~3), so we give only a~sketch thereof. The idea is to construct a~suitable function $h^\sharp$ that preserves all orders on $[0,1]^{\Join}$. Let us consider 2 in more detail. Assume that $\Gamma\not\models^{\langle x,y\rangle^\uparrow}_{\Gsquareorder}\chi$ and that $v[\Gamma]=\langle x_0,y_0\rangle$ with $\langle x_0,y_0\rangle\in\langle x,y\rangle^\uparrow$ and $v(\phi)=\langle x_1,y_1\rangle$ with $\langle x_1,y_1\rangle\notin\langle x,y\rangle^\uparrow$. Now pick $\langle x'_0,y'_0\rangle\in\langle x^\circ,y^\circ\rangle^\uparrow$ and $\langle x'_1,y'_1\rangle\notin\langle x^\circ,y^\circ\rangle^\uparrow$ that conform to the twelve constraints specified in~\eqref{equ:xynonpointgeneratedcondition} (note, however, that now $x'=x$ and $y'=y$) and consider a~function $h^\sharp$ as in Lemma~\ref{lemma:othernonpointgenerated01join}. Now we can define $v^\Updownarrow(p)=\langle h^\sharp(v_1(p)),h^\sharp(v_2(p))\rangle$ and prove by induction on $\phi\in\LGsquareorder$ that $v(\phi)=\langle h^\sharp(v_1(\phi)),h^\sharp(v_2(\phi))\rangle$. The proof is the same as in Lemma~\ref{lemma:othernonpointgenerated01join}.

For the converse, we let $\Gamma\not\models^{\langle x^\circ,y^\circ\rangle^\uparrow}_{\Gsquareorder}\chi$ and $v[\Gamma]=\langle x_0,y_0\rangle$ with $\langle x_0,y_0\rangle\in\langle x^\circ,y^\circ\rangle^\uparrow$ an $v(\chi)=\langle x_1,y_1\rangle$ with $\langle x_1,y_1\rangle\notin\langle x^\circ,y^\circ\rangle^\uparrow$. Now, consider $\langle x',y'\rangle^\uparrow$ with $x'=\dfrac{1+x}{2}$ and $y'=\dfrac{y}{2}$. Note that we are allowed to do that since $\langle x',y'\rangle^\uparrow$ will induce the same entailment relation as $\langle x,y\rangle^\uparrow$ by Lemma~\ref{lemma:otherpointgenerated01join}. We pick $\langle x'_0,y'_0\rangle\in\langle x',y'\rangle^\uparrow$ and $\langle x'_1,y'_1\rangle\notin\langle x',y'\rangle^\uparrow$ that conform to~\eqref{equ:xynonpointgeneratedcondition} and define $h^\sharp$ as above. The rest of the proof is the same.
\end{proof}

Moreover, just as in the case of $\biG$ (recall Theorem~\ref{theorem:orderisfilter01}), we can show that $\models^\leq_{\Gsquareorder}$ is induced by any validity-stable two-dimensional prime filter on $[0,1]^{\Join}$.
\begin{theorem}\label{theorem:orderisprimefilter01join}
Let $1>x>0$, then $\models^{\langle x,1\rangle^\uparrow}_{\Gsquareorder}$, $\models^{\langle x^\circ,1^\bullet\rangle^\uparrow}_{\Gsquareorder}$, and $\models^\leq_{\Gsquareorder}$ coincide.
\end{theorem}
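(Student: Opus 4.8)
The plan is to prove the two equalities $\models^{\langle x,1\rangle^\uparrow}_{\Gsquareorder}=\models^\leq_{\Gsquareorder}$ and $\models^{\langle x^\circ,1^\bullet\rangle^\uparrow}_{\Gsquareorder}=\models^\leq_{\Gsquareorder}$, which together yield that all three relations coincide. First I would record the shape of the two filters: since $y'\leq1$ always holds, both reduce to half-planes cut out by the \emph{first} coordinate alone, namely $\langle x,1\rangle^\uparrow=\{\langle x',y'\rangle\mid x'\geq x\}$ and $\langle x^\circ,1^\bullet\rangle^\uparrow=\{\langle x',y'\rangle\mid x'>x\}$. Hence for either filter $\Dmc$ in the statement, both $v[\Gamma]\in\Dmc$ and $v(\chi)\in\Dmc$ are conditions on $v_1[\Gamma]$ and $v_1(\chi)$ only. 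The inclusion $\models^\leq_{\Gsquareorder}\subseteq\models^\Dmc_{\Gsquareorder}$ is then immediate and completely general: order-entailment says $v[\Gamma]\leq_{[0,1]^{\Join}}v(\chi)$ for every $v$, so whenever $v[\Gamma]\in\Dmc$, upward-closure of $\Dmc$ forces $v(\chi)\in\Dmc$.

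The content lies in the converse inclusion, which I would prove contrapositively for each filter at once. Suppose $\Gamma\not\models^\leq_{\Gsquareorder}\chi$. Negating Definition~\ref{def:Gsquareentailment} produces a valuation $v$ with $v_1[\Gamma]>v_1(\chi)$ (Case~A) or with $v_2[\Gamma]<v_2(\chi)$ (Case~B). I expect Case~B to be the main obstacle: the failure of order-entailment sits entirely in the \emph{falsity} coordinate, whereas $\Dmc$ only ``sees'' the \emph{truth} coordinate, so no rescaling of $v$ can directly turn it into a witness of non-entailment. The way around this is the conflation symmetry of Lemma~\ref{lemma:conflationfiltersvalidity}: replacing $v$ by $v^*$, where $v^*(\phi)=\langle1-v_2(\phi),1-v_1(\phi)\rangle$, gives $v^*_1[\Gamma]=1-v_2[\Gamma]>1-v_2(\chi)=v^*_1(\chi)$, so $v^*$ witnesses Case~A. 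Thus I may assume henceforth that there is a valuation, still denoted $v$, with a genuine gap $v_1(\chi)<v_1[\Gamma]$ in the first coordinate.

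Given such a $v$, I would conclude by the squeezing technique of Theorem~\ref{theorem:orderisfilter01}. Choose a continuous, strictly order-preserving map $h\colon[0,1]\to[0,1]$ with $h(0)=0$ and $h(1)=1$ that places the threshold $x$ inside the gap $(v_1(\chi),v_1[\Gamma])$: concretely $h(v_1(\chi))<x\leq h(v_1[\Gamma])$ for $\Dmc=\langle x,1\rangle^\uparrow$, and $h(v_1(\chi))\leq x<h(v_1[\Gamma])$ for $\Dmc=\langle x^\circ,1^\bullet\rangle^\uparrow$; such $h$ exists precisely because $v_1(\chi)<v_1[\Gamma]$. Setting $v'_i(p)=h(v_i(p))$, the same induction on $\LGsquareorder$-formulas as in Lemmas~\ref{lemma:thinpointgenerated01join} and~\ref{lemma:otherpointgenerated01join} shows $v'(\phi)=\langle h(v_1(\phi)),h(v_2(\phi))\rangle$ for all $\phi$ (continuity of $h$ secures $v'_1[\Gamma]=h(v_1[\Gamma])$ even when the infimum over $\Gamma$ is not attained, and the only new clauses, for ${\sim}$ and $\triangle$, commute with $h$ because their second-coordinate outputs lie in $\{0,1\}$, which $h$ fixes). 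By the choice of $h$ we obtain $v'[\Gamma]\in\Dmc$ while $v'(\chi)\notin\Dmc$, hence $\Gamma\not\models^\Dmc_{\Gsquareorder}\chi$. Applying this to both filters gives $\models^\Dmc_{\Gsquareorder}\subseteq\models^\leq_{\Gsquareorder}$ in each case, so all three relations coincide.
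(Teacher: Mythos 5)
Your proof is correct and follows essentially the same route as the paper's: the easy inclusion by upward-closure, a case split on which coordinate violates order-entailment, the conflation of Lemma~\ref{lemma:conflationfiltersvalidity} to reduce the falsity-coordinate case to the truth-coordinate one, and an order-preserving rescaling $h$ to place the threshold $x$ inside the gap. The only cosmetic difference is that you inline the squeezing argument where the paper instead invokes Lemmas~\ref{lemma:otherpointgenerated01join} and~\ref{lemma:othernonpointgenerated01join}, which encapsulate the same construction.
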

\begin{proof}
It is clear that if $\Gamma\not\models^{\langle x,1\rangle^\uparrow}_{\Gsquareorder}\chi$, then $\Gamma\not\models^\leq_{\Gsquareorder}\chi$ (and likewise, if $\Gamma\not\models^{\langle x^\circ,1^\bullet\rangle^\uparrow}_{\Gsquareorder}\chi$, then $\Gamma\not\models^\leq_{\Gsquareorder}\chi$). We show the converse direction. Assume that $\Gamma\not\models^\leq_{\Gsquareorder}\chi$. Then there is some~$v$ s.t.\ $v_1[\Gamma]=x_0$ and $v_1(\chi)=x_1$ with $x_0>x_1$ or $v_2[\Gamma]=y_0$ and $v_2(\chi)=y_1$ with $y_0<y_1$. In the first case, we have that $\Gamma\not\models^{\langle x_0,1\rangle^\uparrow}_{\Gsquareorder}\chi$ and $\Gamma\not\models^{\langle x^\circ_1,1^\bullet\rangle^\uparrow}_{\Gsquareorder}\chi$, respectively. Applying Lemmas~\ref{lemma:otherpointgenerated01join} and~\ref{lemma:othernonpointgenerated01join}, we have that $\Gamma\not\models^{\langle x,1\rangle^\uparrow}_{\Gsquareorder}\chi$ and $\Gamma\not\models^{\langle x^\circ,1^\bullet\rangle^\uparrow}_{\Gsquareorder}\chi$. In the second case, we have $\Gamma\not\models^{\langle0,y_0\rangle^\uparrow}_{\Gsquareorder}\chi$ and $\Gamma\not\models^{\langle0^\bullet,y^\circ_1\rangle^\uparrow}_{\Gsquareorder}\chi$. Lemma~\ref{lemma:conflationfiltersentailment} gives us that $\Gamma\not\models^{\langle1-y_0,1\rangle^\uparrow}_{\Gsquareorder}\chi$ and $\Gamma\not\models^{\langle1-y^\circ_1,1^\bullet\rangle^\uparrow}_{\Gsquareorder}\chi$. Again, by Lemmas~\ref{lemma:otherpointgenerated01join} and~\ref{lemma:othernonpointgenerated01join}, we have that $\Gamma\not\models^{\langle x,1\rangle^\uparrow}_{\Gsquareorder}\chi$ and $\Gamma\not\models^{\langle x^\circ,1^\bullet\rangle^\uparrow}_{\Gsquareorder}\chi$, as required. The result follows.
\end{proof}

We can now obtain the exact number of \emph{all} filter-induced entailment relations in $\Gsquareorder$ and their hierarchy.
\begin{figure}
\centering
\begin{tikzpicture}[>=stealth,relative]
\node (order) at (-1,0) {\textcolor{red}{$\Gamma\models^{\langle x,1\rangle^\uparrow}_{\Gsquareorder}\!\!\!\!\chi$}};
\node (BRE) at (2.5,2) {$\Gamma\models^{\langle0^\circ,1^\bullet\rangle^\uparrow}_{\Gsquareorder}\!\!\chi$};
\node (BREBLE) at (5,2) {$\Gamma\models^{\langle0^\circ,1^\circ\rangle^\uparrow}_{\Gsquareorder}\chi$};
\node (xcirc1circ) at (2.5,0) {\textcolor{red}{$\Gamma\models^{\langle x^\circ,1^\circ\rangle^\uparrow}_{\Gsquareorder}\!\!\chi$}};
\node (yx) at (5,0) {\textcolor{red}{$\Gamma\models^{\langle y,x\rangle^\uparrow}_{\Gsquareorder}\chi$}};
\node (xx) at (7.5,0) {\textcolor{red}{$\Gamma\models^{\langle x,x\rangle^\uparrow}_{\Gsquareorder}\chi$}};
\node (xy) at (10,0) {\textcolor{red}{$\Gamma\models^{\langle x,y\rangle^\uparrow}_{\Gsquareorder}\chi$}};
\node (1y) at (12.5,0) {\textcolor{red}{$\Gamma\models^{\langle 1,y\rangle^\uparrow}_{\Gsquareorder}\chi$}};
\node (11sharp) at (12.5,-2) {\textcolor{red}{$\Gamma\models^{\langle 1,1\rangle^{\sharp\uparrow}}_{\Gsquareorder}\chi$}};
\node (10) at (12.5,-4) {\textcolor{red}{$\Gamma\models^{\langle 1,0\rangle^\uparrow}_{\Gsquareorder}\chi$}};
\node (11) at (2.5,-2) {\textcolor{red}{$\Gamma\models^{\langle 1,1\rangle^\uparrow}_{\Gsquareorder}\chi$}};
\draw[double,->] (order) to (BRE);
\draw[double,->] (order) to (11);
\draw[double,->] (order) to (xcirc1circ);
\draw[double,->] (11) to (10);
\draw[double,->] (11sharp) to (10);
\draw[double,->] (BRE) to (BREBLE);
\draw[double,->] (yx) to (BREBLE);
\draw[double,->] (yx) to (xx);
\draw[double,->] (xx) to (xy);
\draw[double,->] (xy) to (1y);
\draw[double,->] (1y) to (11sharp);
\draw[double,->] (xcirc1circ) to (yx);
\end{tikzpicture}
\caption{Hierarchy of entailment relations in $\Gsquareorder$. $\Gamma\cup\{\chi\}\subseteq\LGsquareorder$; $x,y\in(0,1)$ in all entailment relations and $x>y$ in $\models^{\langle x,y\rangle^\uparrow}_{\Gsquareorder}$ and $\models^{\langle y,x\rangle^\uparrow}_{\Gsquareorder}$; arrows stand for ‘if\ldots, then\ldots’. $\Gamma\models^\leq_{\Gsquareorder}\!\!\!\!\chi$ coincides with $\Gamma\!\!\models^{\langle x,1\rangle^\uparrow}_{\Gsquareorder}\!\!\!\!\chi$. Validity-preserving entailment relations are highlighted with \textcolor{red}{red}.}
\label{fig:Gsquareorderhierarchy}
\end{figure}
\begin{theorem}\label{cor:11entailmentsGsquareorder}
There are eleven filter-induced entailment relations in $\Gsquareorder$. They are ordered as shown in Fig.~\ref{fig:Gsquareorderhierarchy} and the hierarchy is strict.
\end{theorem}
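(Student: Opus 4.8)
The plan is to assemble the count of eleven from the two preceding subsections and then read off the order largely from the diagnostic entailments that are already in play. For the count, Theorems~\ref{theorem:7pointgeneratedfilters01join} and~\ref{theorem:8nonpointgeneratedfilters01join} give seven point-generated and eight non-point-generated relations, hence at most fifteen candidates. The four bridging coincidences collapse this to eleven: Lemma~\ref{lemma:nonpointgeneratedcoincidence} identifies $\models^{\langle x,0\rangle^\uparrow}_{\Gsquareorder}$ with $\models^{\langle x,0\rangle^{\sharp\uparrow}}_{\Gsquareorder}$, $\models^{\langle x,y\rangle^\uparrow}_{\Gsquareorder}$ with $\models^{\langle x^\circ,y^\circ\rangle^\uparrow}_{\Gsquareorder}$, and $\models^{\langle y,x\rangle^\uparrow}_{\Gsquareorder}$ with $\models^{\langle y^\circ,x^\circ\rangle^\uparrow}_{\Gsquareorder}$, while Theorem~\ref{theorem:orderisprimefilter01join} identifies $\models^{\langle x,1\rangle^\uparrow}_{\Gsquareorder}$, $\models^{\langle x^\circ,1^\bullet\rangle^\uparrow}_{\Gsquareorder}$ and $\models^\leq_{\Gsquareorder}$. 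To rule out further coincidences I would extend the tables of Lemmas~\ref{lemma:lowerboundpointgenerated01join} and~\ref{lemma:lowerboundnonpointgenerated01join} to the common column set P.I--P.VIII and observe that the eleven surviving representatives already carry pairwise-distinct fingerprints (the four merged pairs sharing fingerprints, as they must).

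For the order itself I would argue in two layers. The top element is $\models^\leq_{\Gsquareorder}=\models^{\langle x,1\rangle^\uparrow}_{\Gsquareorder}$: since $\Gamma\models^\leq_{\Gsquareorder}\chi$ forces $v[\Gamma]\leq_{[0,1]^{\Join}}v(\chi)$ and every filter is upward closed, $\models^\leq_{\Gsquareorder}\subseteq\models^\Dmc_{\Gsquareorder}$ for every $\Dmc$; this yields the three arrows leaving the top and, by transitivity, its dominance over all others. Each remaining arrow $\Dmc\to\Dmc'$ I would prove in contrapositive form by the transformation technique of Lemmas~\ref{lemma:thinpointgenerated01join}--\ref{lemma:othernonpointgenerated01join}: feed a valuation witnessing $\Gamma\not\models^{\Dmc'}_{\Gsquareorder}\chi$ through a strictly order-preserving $h^\sharp$ (composed, where convenient, with the conflation $v\mapsto v^*$ of Lemma~\ref{lemma:conflationfiltersvalidity}) that relocates the generating data of $\Dmc'$ onto that of $\Dmc$, and run the usual induction $v^\Updownarrow(\phi)=\langle h^\sharp(v_1(\phi)),h^\sharp(v_2(\phi))\rangle$ to produce a witness of $\Gamma\not\models^{\Dmc}_{\Gsquareorder}\chi$.

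Strictness and the absence of further arrows I would extract from the diagnostics P.I--P.VIII, which are \emph{monotone} along implications: a `yes' can only propagate from a stronger relation to a weaker one. Hence a drawn arrow is strict as soon as some diagnostic is `no' at its source and `yes' at its target, and two relations are incomparable as soon as some diagnostic separates them each way. Cross-tabulating the eleven fingerprints discharges the strictness of every edge and the bulk of the incomparabilities, and Lemma~\ref{lemma:conflationfiltersentailment} halves the labour by pairing each filter with its conflation dual.

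The main obstacle is the lone incomparability the monotone fingerprints cannot certify in both directions, namely $\models^{\langle1,1\rangle^\uparrow}_{\Gsquareorder}$ versus $\models^{\langle1,1\rangle^{\sharp\uparrow}}_{\Gsquareorder}$. Here the fingerprint of the punctured, non-prime filter \emph{dominates} that of the principal prime filter, so only the direction $\models^{\langle1,1\rangle^{\sharp\uparrow}}_{\Gsquareorder}\not\subseteq\models^{\langle1,1\rangle^\uparrow}_{\Gsquareorder}$ comes for free (from P.I, via $p\wedge\neg p\models^{\langle1,1\rangle^{\sharp\uparrow}}_{\Gsquareorder}q$ but $p\wedge\neg p\not\models^{\langle1,1\rangle^\uparrow}_{\Gsquareorder}q$). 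To block the reverse inclusion I would have to engineer a bespoke pair $\Gamma,\chi$ with $\Gamma\models^{\langle1,1\rangle^\uparrow}_{\Gsquareorder}\chi$ yet $\Gamma\not\models^{\langle1,1\rangle^{\sharp\uparrow}}_{\Gsquareorder}\chi$: a $\chi$ whose positive support is forced to $1$ by $\Gamma$ while the value $\langle1,1\rangle$ stays attainable and all premises keep negative support strictly below $1$. This is the delicate point, because the strong-negation clauses tend to drive the negative support of any premise that forces $v_1(\chi)=1$ up to $1$ precisely when $\chi$ takes the value $\langle1,1\rangle$; finding a configuration that evades this collapse — rather than the routine fingerprint bookkeeping — is where the real care is required, and the conflation symmetry of Lemma~\ref{lemma:conflationfiltersentailment} shows it is genuinely the only such case.
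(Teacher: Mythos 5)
Your route is the paper's own: collapse the fifteen candidates from Theorems~\ref{theorem:7pointgeneratedfilters01join} and~\ref{theorem:8nonpointgeneratedfilters01join} to eleven via Lemma~\ref{lemma:nonpointgeneratedcoincidence} and Theorem~\ref{theorem:orderisprimefilter01join}, separate the survivors by the diagnostics P.I--P.VIII, prove the arrows contrapositively by transporting a falsifying valuation with a strictly order-preserving $h^\sharp$ (plus Lemma~\ref{lemma:conflationfiltersentailment}), and read strictness and incomparability off the monotone fingerprints. All of that is sound. The genuine gap is exactly the point you isolate and then leave open: refuting $\models^{\langle1,1\rangle^\uparrow}_{\Gsquareorder}\subseteq\models^{\langle1,1\rangle^{\sharp\uparrow}}_{\Gsquareorder}$. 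The configuration you describe does exist and is one formula long: take $\Gamma=\{p\}$ and $\chi=\triangle p$. If $v(p)\in\langle1,1\rangle^\uparrow$ then $v_1(p)=1$, whence $v(\triangle p)=\langle1,{\sim_\Gmsf}{\sim_\Gmsf}v_2(p)\rangle\in\{\langle1,0\rangle,\langle1,1\rangle\}\subseteq\langle1,1\rangle^\uparrow$, so $p\models^{\langle1,1\rangle^\uparrow}_{\Gsquareorder}\triangle p$; but $v(p)=\langle1,\sfrac{1}{2}\rangle\in\langle1,1\rangle^{\sharp\uparrow}$ yields $v(\triangle p)=\langle1,1\rangle\notin\langle1,1\rangle^{\sharp\uparrow}$. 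Your worry that any premise forcing $v_1(\chi)=1$ must have its negative support driven to $1$ whenever $\chi$ reaches $\langle1,1\rangle$ is unfounded precisely because $\triangle$ is discontinuous in the second coordinate: it sends every positive negative support to $1$ while the premise $p$ itself keeps $v_2(p)=\sfrac{1}{2}$. Without this (or an equivalent) witness the separation of $\models^{\langle1,1\rangle^\uparrow}_{\Gsquareorder}$ from the middle chain of Fig.~\ref{fig:Gsquareorderhierarchy} is not established, so the proof is incomplete as written.

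A secondary inaccuracy: this is not quite ``the only such case'', and conflation duality is not what limits it. The fingerprint of $\models^{\langle1,y\rangle^\uparrow}_{\Gsquareorder}$ also dominates that of $\models^{\langle1,1\rangle^\uparrow}_{\Gsquareorder}$ on all of P.I--P.VIII, so the table likewise fails to exclude $\models^{\langle1,1\rangle^\uparrow}_{\Gsquareorder}\subseteq\models^{\langle1,y\rangle^\uparrow}_{\Gsquareorder}$. These residual cases are discharged together, either by transitivity through the already-proved arrow $\models^{\langle1,y\rangle^\uparrow}_{\Gsquareorder}\Rightarrow\models^{\langle1,1\rangle^{\sharp\uparrow}}_{\Gsquareorder}$, or directly by the same example, since $p\not\models^{\Dmc}_{\Gsquareorder}\triangle p$ for every filter $\Dmc$ that contains some $\langle1,y'\rangle$ with $y'>0$ but omits $\langle1,1\rangle$ --- which is exactly how the paper settles the whole block at once.
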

\begin{proof}
From Theorems~\ref{theorem:7pointgeneratedfilters01join}, \ref{theorem:8nonpointgeneratedfilters01join}, and~\ref{theorem:orderisprimefilter01join} as well as Lemma~\ref{lemma:nonpointgeneratedcoincidence}, we know that we \emph{do not} have other filter-induced entailment relations than depicted on Fig.~\ref{fig:Gsquareorderhierarchy}. To check that they are pairwise distinct, it remains to show that $\models^{\langle1,0\rangle^\uparrow}_{\Gsquareorder}$ and $\models^{\langle1,1\rangle^\uparrow}_{\Gsquareorder}$ \emph{do not} coincide with any entailment relation induced by a~non-point-generated filter. This is simple: $\models^{\langle0^\circ,1^\bullet\rangle^\uparrow}_{\Gsquareorder}$ and $\models^{\langle0^\circ,1^\circ\rangle^\uparrow}_{\Gsquareorder}$ \emph{are not validity stable} while $\models^{\langle1,0\rangle^\uparrow}_{\Gsquareorder}$ and $\models^{\langle1,1\rangle^\uparrow}_{\Gsquareorder}$ are (recall Theorem~\ref{theorem:validitystable01join}). \ref{item:nonprimefilter} fails for $\models^{\langle1,1\rangle^\uparrow}_{\Gsquareorder}$ but not $\models^{\langle x^\circ,1^\circ\rangle}_{\Gsquareorder}$ and \ref{item:designatedfilter} fails for $\models^{\langle x^\circ,1^\circ\rangle}_{\Gsquareorder}$ but not $\models^{\langle1,0\rangle^\uparrow}_{\Gsquareorder}$ (Lemmas~\ref{lemma:lowerboundpointgenerated01join} and~\ref{lemma:lowerboundnonpointgenerated01join}).

Let us proceed to the implications of the figure. It is clear that $\Gamma\models^\leq_{\Gsquareorder}\chi$ (and thus $\Gamma\models^{\langle x,1\rangle^\uparrow}_{\Gsquareorder}\chi$, by Theorem~\ref{theorem:orderisprimefilter01join}) implies $\Gamma\models^\Dmc_{\Gsquareorder}\chi$ for every filter $\Dmc$ on $[0,1]^{\Join}$. It is also clear that $\Gamma\models^{\langle 1,1\rangle^\uparrow}_{\Gsquareorder}\chi$ and $\Gamma\models^{\langle 1,1\rangle^{\sharp\uparrow}}_{\Gsquareorder}\chi$ imply $\Gamma\models^{\langle 1,0\rangle^\uparrow}_{\Gsquareorder}\chi$. Indeed, assume that $\Gamma\not\models^{\langle 1,0\rangle^\uparrow}_{\Gsquareorder}\chi$. Then there is some $\Gsquare$-valuation~$v$ s.t.\ $v[\Gamma]=\langle1,0\rangle$ and $v(\chi)\neq\langle1,0\rangle$. If $v_1(\chi)\neq1$, $v$ witnesses $\Gamma\not\models^{\langle 1,1\rangle^\uparrow}_{\Gsquareorder}\chi$ and $\Gamma\not\models^{\langle 1,1\rangle^{\sharp\uparrow}}_{\Gsquareorder}\chi$. If $v_2(\chi)>0$, then by Lemma~\ref{lemma:conflationfiltersvalidity}, $v^*[\Gamma]=\langle1,0\rangle$ and $v^*_1(\chi)\neq1$, as required.

Consider now ‘$\Gamma\models^{\langle0^\circ,1^\bullet\rangle^\uparrow}_{\Gsquareorder}\chi\Rightarrow\Gamma\models^{\langle0^\circ,1^\circ\rangle^\uparrow}_{\Gsquareorder}\chi$’ and ‘$\Gamma\models^{\langle y,x\rangle^\uparrow}_{\Gsquareorder}\chi\Rightarrow\Gamma\models^{\langle0^\circ,1^\circ\rangle^\uparrow}_{\Gsquareorder}\chi$’. Let $\Gamma\not\models^{\langle0^\circ,1^\circ\rangle^\uparrow}_{\Gsquareorder}\chi$. Then either (1) $v[\Gamma]=\langle x_0,y_0\rangle$ and $v_1(\chi)=0$ with $x_0>0$, $y_0<1$ or (2) $v'[\Gamma]=\langle x_1,y_1\rangle$ and $v'_2(\chi)=1$ with $x_1>0$, $y_1<1$ for some pairs of $\Gsquare$-valuations $v$ and $v'$. In the first case, it is immediate that $v$ witnesses $\Gamma\not\models^{\langle0^\circ,1^\bullet\rangle^\uparrow}_{\Gsquareorder}\chi$. Moreover, $v$ witnesses $\Gamma\not\models^{\langle y,x\rangle^\uparrow}_{\Gsquareorder}\chi$ since $\langle y,x\rangle^\uparrow\cap(\{\langle0,y'\rangle\mid y'\in[0,1]\}\cup\{\langle x',1\rangle\mid x'\in[0,1]\})=\varnothing$. The second case can be tackled in the same way (we just need to use Lemma~\ref{lemma:conflationfiltersvalidity} when dealing with $\Gamma\models^{\langle0^\circ,1^\bullet\rangle^\uparrow}_{\Gsquareorder}\chi$).

Let us now deal with ‘$\Gamma\models^{\langle x^\circ,1^\circ\rangle^\uparrow}_{\Gsquareorder}\chi\Rightarrow\Gamma\models^{\langle y,x\rangle^\uparrow}_{\Gsquareorder}\chi$’. We assume that $\Gamma\not\models^{\langle y,x\rangle^\uparrow}_{\Gsquareorder}\chi$. This means that either (1) $v_1[\Gamma]\geq y$ but $v_1(\chi)=y'<y$ or (2) $v_2[\Gamma]\leq x$ but $v_2(\chi)=x'>x$. Thus, $v_1$ witnesses $\Gamma\not\models^{\langle y'^\circ,1^\circ\rangle^\uparrow}_{\Gsquareorder}\chi$ (but by Lemma~\ref{lemma:othernonpointgenerated01join}.3 this is equivalent to $\Gamma\not\models^{\langle x^\circ,1^\circ\rangle^\uparrow}_{\Gsquareorder}\chi$) or $v_2$ witnesses $\Gamma\not\models^{\langle0^\circ,x^\circ\rangle^\uparrow}_{\Gsquareorder}\chi$ (by Lemma~\ref{lemma:conflationfiltersentailment}, this is equivalent to $\Gamma\not\models^{\langle(1-x)^\circ,1^\circ\rangle^\uparrow}_{\Gsquareorder}\chi$ which by Lemma~\ref{lemma:othernonpointgenerated01join}.3 is equivalent to $\Gamma\not\models^{\langle x^\circ,1^\circ\rangle^\uparrow}_{\Gsquareorder}\chi$). Other implications can be shown in the same manner.

Finally, let us consider the incomparability of entailment relations. Note that $\langle1,1\rangle^\uparrow$ is prime and thus the entailment relation induced by it does not satisfy~\ref{item:nonprimefilter} (Lemma~\ref{lemma:lowerboundpointgenerated01join}). On the other hand, all entailment relations from $\models^{\langle x^\circ,1^\circ\rangle^\uparrow}_{\Gsquareorder}$ to $\models^{\langle1,1\rangle^{\sharp\uparrow}}_{\Gsquareorder}$ are induced by non-prime filters and do satisfy~\ref{item:nonprimefilter}. Conversely, $p\models^{\langle1,1\rangle^\uparrow}_{\Gsquareorder}\triangle p$ but $p\not\models^\Dmc_{\Gsquareorder}\triangle p$ when $\Dmc\in\{\langle x^\circ,1^\circ\rangle^\uparrow,\langle y,x\rangle^\uparrow,\langle x,x\rangle^\uparrow,\langle1,y\rangle^\uparrow,\langle1,1\rangle^{\sharp\uparrow}\}$ because none of these filters contain $\langle1,1\rangle$ but all of them contain $\langle1,y'\rangle$ for some $y'>0$ and $v(\triangle p)=\langle1,1\rangle$ if $v_1(p)=1$ and $v_2(p)>0$. Likewise, $\langle0^\circ,1^\circ\rangle$ is also prime (so, $\models^{\langle0^\circ,1^\circ\rangle}_{\Gsquareorder}$ does not satisfy~\ref{item:nonprimefilter}) but it is \emph{not} validity-stable while every $\Dmc\in\{\langle x^\circ,1^\circ\rangle^\uparrow,\langle y,x\rangle^\uparrow,\langle x,x\rangle^\uparrow,\langle1,y\rangle^\uparrow,\langle1,1\rangle^{\sharp\uparrow}\}$ is (Theorem~\ref{theorem:validitystable01join}). In particular, $p\models^{\langle0^\circ,1^\bullet\rangle}_{\Gsquareorder}q\vee{\sim}q$ but $p\not\models^\Dmc_{\Gsquareorder}q\vee{\sim}q$.

For the incomparability between $\models^{\langle0^\circ,1^\circ\rangle^\uparrow}_{\Gsquareorder}$ and $\models^{\Dmc'}_{\Gsquareorder}$ $\Dmc'\in\{\langle x,x\rangle^\uparrow,\langle1,y\rangle^\uparrow,\langle1,1\rangle^{\sharp\uparrow},\langle1,0\rangle^\uparrow\}$, note that the former is not validity-stable but the latter is and that the latter satisfies~\ref{item:finitelyparaconsistentfilter} but the former does not (Lemmas~\ref{lemma:lowerboundpointgenerated01join} and~\ref{lemma:lowerboundnonpointgenerated01join}). Finally, the incomparability between $\models^{\langle0^\circ,1^\circ\rangle^\uparrow}_{\Gsquareorder}$ and $\models^{\langle1,1\rangle^\uparrow}_{\Gsquareorder}$ follows since the former satisfies~\ref{item:BRE} and does not satisfy~\ref{item:finitelyparaconsistentfilter} and the latter does not satisfy~\ref{item:BRE} but does satisfy~\ref{item:finitelyparaconsistentfilter}.
\end{proof}

We finish the section by showing how the entailment relations induced by \emph{point-generated} filters can be reduced to $\models^\leq_{\Gsquareorder}$. The idea of the proof is similar to that of Theorem~\ref{theorem:entailmentreductionsinvG}.
\begin{theorem}\label{theorem:entailmentreductionsGsquareorder}
Let $1>x>y>0$, $\Gamma\cup\{\chi\}\subseteq\LGsquareorder$ and $p$ be fresh. Denote
\begin{align*}
\triangletop\Gamma&=\{\triangletop\phi\mid\phi\in\Gamma\}\\
\triangle\Gamma&=\{\triangle\phi\mid\phi\in\Gamma\}\\
\Gamma^{1\Dmsf}&=\{\triangletop(p\rightarrow\phi)\mid\phi\in\Gamma\}\cup\{\triangletop(\neg p\rightarrow p),{\sim}\triangletop(p\rightarrow\neg p),\triangle p\leftrightarrow\neg\triangle p\}\\
\Gamma^{2\Dmsf}_\uparrow&=\{\triangletop(p\rightarrow\phi)\mid\phi\in\Gamma\}\cup\{\triangletop(\neg p\rightarrow p),{\sim}\triangletop(p\rightarrow\neg p),{\sim}\triangle p\}\\
\Gamma^{2\Dmsf}_-&=\{\triangletop(p\rightarrow\phi)\mid\phi\in\Gamma\}\cup\{\triangletop(\neg p\leftrightarrow p),{\sim}\triangle p\}\\
\Gamma^{2\Dmsf}_\downarrow&=\{\triangletop(p\rightarrow\phi)\mid\phi\in\Gamma\}\cup\{{\sim}\triangletop(\neg p\rightarrow p),\triangletop(p\rightarrow\neg p),{\sim}\triangle p\}
\end{align*}
Then the following equivalences hold.
\begin{align*}
\Gamma\models^{\langle1,0\rangle^\uparrow}_{\Gsquareorder}\chi&\text{ iff }\triangletop\Gamma\models^\leq_{\Gsquareorder}\chi&\Gamma^{\langle1,1\rangle^\uparrow}_{\Gsquareorder}\chi&\text{ iff }\triangle\Gamma\models^\leq_{\Gsquareorder}\chi\\
\Gamma\models^{\langle1,y\rangle^\uparrow}_{\Gsquareorder}\chi&\text{ iff }\Gamma^{1\Dmsf}\models^\leq_{\Gsquareorder}\triangletop(p\rightarrow\chi)&\Gamma\models^{\langle x,y\rangle^\uparrow}_{\Gsquareorder}\chi&\text{ iff }\Gamma^{2\Dmsf}_\uparrow\models^\leq_{\Gsquareorder}\triangletop(p\rightarrow\chi)\\
\Gamma\models^{\langle x,x\rangle^\uparrow}_{\Gsquareorder}\chi&\text{ iff }\Gamma^{2\Dmsf}_-\models^\leq_{\Gsquareorder}\triangletop(p\rightarrow\chi)&\Gamma\models^{\langle y,x\rangle}_{\Gsquareorder}\chi&\text{ iff }\Gamma^{2\Dmsf}_\downarrow\models^\leq_{\Gsquareorder}\triangletop(p\rightarrow\chi)
\end{align*}
\end{theorem}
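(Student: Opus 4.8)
The plan is to follow the template of Theorem~\ref{theorem:entailmentreductionsinvG}: since point-generated filters on $[0,1]^{\Join}$ are principal, ``$v[\Gamma]$ lies in the filter'' coincides with ``every $v(\phi)$ lies in it'', so a fresh variable $p$ can serve as a movable threshold. The central computation, using~\eqref{equ:triangletop} together with the residua, is that $v(\triangletop(p\rightarrow\phi))=\langle1,0\rangle$ exactly when $v(p)$ lies below $v(\phi)$ in the truth order on $[0,1]^{\Join}$, and $=\langle0,1\rangle$ otherwise. Hence $v(\triangletop(p\rightarrow\phi))=\langle1,0\rangle$ for all $\phi\in\Gamma$ iff $v[\Gamma]\in v(p)^\uparrow$, and the conclusion $\triangletop(p\rightarrow\chi)$ is $\langle1,0\rangle$ iff $v(\chi)\in v(p)^\uparrow$. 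First I would tabulate the values of the control formulas as functions of $v(p)$: the pair $\triangletop(\neg p\rightarrow p)$, ${\sim}\triangletop(p\rightarrow\neg p)$ (and dually $\triangletop(p\rightarrow\neg p)$, ${\sim}\triangletop(\neg p\rightarrow p)$) detect the strict sign of $v_1(p)-v_2(p)$; $\triangletop(\neg p\leftrightarrow p)$ detects $v_1(p)=v_2(p)$; $\triangle p\leftrightarrow\neg\triangle p$ is $\langle1,0\rangle$ iff $v_1(p)=1$ is equivalent to $v_2(p)>0$; and ${\sim}\triangle p$ reflects $v_1(p)<1$ and $v_2(p)>0$. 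Combined, these confine $v(p)$ to the region matching each target: the one-dimensional boundary for $\Gamma^{1\Dmsf}$, the region $v_1(p)>v_2(p)$ for $\Gamma^{2\Dmsf}_\uparrow$, the diagonal $v_1(p)=v_2(p)$ for $\Gamma^{2\Dmsf}_-$, and $v_1(p)<v_2(p)$ for $\Gamma^{2\Dmsf}_\downarrow$.

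The core correspondence is then uniform. When every premise and the conclusion take values in $\{\langle1,0\rangle,\langle0,1\rangle\}$, the order-entailment $\Gamma^{\ast}\models^\leq_{\Gsquareorder}\triangletop(p\rightarrow\chi)$ fails exactly when some $v$ makes all premises $\langle1,0\rangle$ and the conclusion $\langle0,1\rangle$; by the paragraph above this says precisely that $v(p)$ lies in the admissible region, $v[\Gamma]\in v(p)^\uparrow$ and $v(\chi)\notin v(p)^\uparrow$, i.e.\ $\Gamma\not\models^{v(p)^\uparrow}_{\Gsquareorder}\chi$. Invoking Lemmas~\ref{lemma:conflationfiltersentailment}, \ref{lemma:thinpointgenerated01join} and~\ref{lemma:otherpointgenerated01join} I would identify $\models^{v(p)^\uparrow}_{\Gsquareorder}$ with the canonical representative of Theorem~\ref{theorem:7pointgeneratedfilters01join}, closing each equivalence. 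The two cases with a constant transformation are the cleanest: $\models^{\langle1,0\rangle^\uparrow}_{\Gsquareorder}$ reduces directly because $v(\triangletop\phi)=\langle1,0\rangle$ iff $v(\phi)=\langle1,0\rangle$; and $\models^{\langle1,1\rangle^\uparrow}_{\Gsquareorder}$ reduces via $\triangle\Gamma$, since $\inf_\phi v_1(\triangle\phi)=1$ iff every $v_1(\phi)=1$, while the matching negative-support requirement $\sup_\phi v_2(\triangle\phi)=0$ is equivalent to the positive one through the conflation duality of Lemma~\ref{lemma:conflationfiltersvalidity}. The case $\Gamma^{1\Dmsf}$ is equally clean, since all of its premises are $\{\langle1,0\rangle,\langle0,1\rangle\}$-valued and its admissible region is exactly the one-dimensional non-prime boundary.

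The main obstacle is the \emph{non-Boolean} behaviour of ${\sim}\triangle p$: it equals $\langle1,1\rangle$ at $v(p)=\langle c,0\rangle$ (including the corner $\langle0,0\rangle$) and $\langle0,0\rangle$ at $v(p)=\langle1,d\rangle$ (including $\langle1,1\rangle$). Because of this, failure of $\models^\leq_{\Gsquareorder}$ splits along its two coordinates --- it suffices that \emph{all} premises have $v_1=1$ \emph{or} that all have $v_2=0$ --- and each disjunct can place the threshold $v(p)$ on the very edge of the intended region. For $\Gamma^{2\Dmsf}_\uparrow$ this is harmless: the strict inequality $v_1(p)>v_2(p)$ keeps the threshold away from the corner $\langle1,0\rangle$, so the only edge witnesses generate one-dimensional non-prime filters, and the implication $\models^{\langle x,y\rangle^\uparrow}_{\Gsquareorder}\Rightarrow\models^{\langle1,y\rangle^\uparrow}_{\Gsquareorder}$ along the hierarchy of Fig.~\ref{fig:Gsquareorderhierarchy} absorbs them into the target.

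The genuinely delicate cases are the paraconsistent $\Gamma^{2\Dmsf}_-$ and $\Gamma^{2\Dmsf}_\downarrow$, whose targets $\models^{\langle x,x\rangle^\uparrow}_{\Gsquareorder}$ and $\models^{\langle y,x\rangle^\uparrow}_{\Gsquareorder}$ are non-prime interior filters. Here the disjunctive failure condition threatens to let $v(p)$ slip to $\langle0,0\rangle$, $\langle1,1\rangle$, or a left/upper edge, whose generated filters are \emph{prime} and therefore incomparable to the intended interior target (so such a witness would not entail the target non-entailment). I expect the crux of the proof to be showing that these corner thresholds are in fact excluded --- that at $v(p)=\langle0,0\rangle$, say, the value $\langle1,1\rangle$ of ${\sim}\triangle p$ blocks the required failure, and symmetrically for $\langle1,1\rangle$ via Lemma~\ref{lemma:conflationfiltersvalidity} --- thereby certifying $0<v_1(p)$ and $v_2(p)<1$ so that $v(p)^\uparrow$ stays genuinely two-dimensional and non-prime. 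Once the threshold is pinned to the strict interior, Lemmas~\ref{lemma:conflationfiltersentailment} and~\ref{lemma:otherpointgenerated01join} again collapse $\models^{v(p)^\uparrow}_{\Gsquareorder}$ onto the canonical $\models^{\langle x,x\rangle^\uparrow}_{\Gsquareorder}$ and $\models^{\langle y,x\rangle^\uparrow}_{\Gsquareorder}$. Getting this corner analysis exactly right --- rather than merely plausible --- is where I would concentrate the most care, as it is precisely the interaction between the two-coordinate failure of $\models^\leq_{\Gsquareorder}$ and the boundary values of the strong negation that is easy to mishandle.
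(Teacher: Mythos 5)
Your strategy is the paper's own (the paper writes out only the $\Gamma^{1\Dmsf}$ case and declares the rest analogous), and your diagnosis of where the danger lies --- the disjunctive failure condition of $\models^\leq_{\Gsquareorder}$ interacting with the non-Boolean values of ${\sim}\triangle p$ --- is exactly right. The gap is that the step you defer to (``showing that these corner thresholds are in fact excluded'') cannot be carried out: at $v(p)=\langle0,0\rangle$ one computes ${\sim}\triangle p=\langle1,1\rangle$, and since its \emph{first} coordinate is $1$ it does not block a failure of $\models^\leq_{\Gsquareorder}$ that proceeds through the first coordinate alone. Concretely, take $\Gamma=\{{\sim}(r\wedge\neg r),\neg{\sim}(r\wedge\neg r)\}$ and $\chi=q$. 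Since $\langle x,x\rangle^\uparrow$ is non-prime, statement~\ref{item:nonprimefilter} of Lemma~\ref{lemma:lowerboundpointgenerated01join} gives $\Gamma\models^{\langle x,x\rangle^\uparrow}_{\Gsquareorder}q$ (vacuously: no valuation places both premises in $\langle x,x\rangle^\uparrow$). Yet with $v(p)=\langle0,0\rangle$, $v(r)=\langle1,1\rangle$, $v(q)=\langle0,1\rangle$, the three $\triangletop$-premises of $\Gamma^{2\Dmsf}_-$ evaluate to $\langle1,0\rangle$ and ${\sim}\triangle p$ to $\langle1,1\rangle$, so $\inf\{v_1(\psi)\mid\psi\in\Gamma^{2\Dmsf}_-\}=1>0=v_1(\triangletop(p\rightarrow q))$ and hence $\Gamma^{2\Dmsf}_-\not\models^\leq_{\Gsquareorder}\triangletop(p\rightarrow q)$. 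The corner is therefore not excluded and the stated equivalence for $\langle x,x\rangle^\uparrow$ fails. The $\Gamma^{2\Dmsf}_\downarrow$ case breaks even more directly: at $v(p)=\langle0,d\rangle$ with $0<d<1$ \emph{every} premise of $\Gamma^{2\Dmsf}_\downarrow$ (including ${\sim}\triangle p$) equals $\langle1,0\rangle$, yet $\langle0,d\rangle^\uparrow$ is a prime filter, so such a witness only certifies $\Gamma\not\models^\leq_{\Gsquareorder}\chi$, which does not refute $\Gamma\models^{\langle y,x\rangle^\uparrow}_{\Gsquareorder}\chi$; the same $\Gamma$ and $\chi$ as above again separate the two sides.

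The underlying reason is that ${\sim}\triangle p$ guards the wrong corners for the paraconsistent targets: it forces $v_1(p)<1$ and $v_2(p)>0$ (and penalises $\langle0,0\rangle$ only in the second coordinate and $\langle1,1\rangle$ only in the first), whereas non-primeness of $v(p)^\uparrow$ on and below the diagonal requires $v_1(p)>0$ and $v_2(p)<1$ --- a condition detected by, e.g., ${\sim}{\sim}p=\langle1,0\rangle$, not by ${\sim}\triangle p$. So the repair is not a finer corner analysis but different (or additional) control premises in $\Gamma^{2\Dmsf}_-$ and $\Gamma^{2\Dmsf}_\downarrow$. Your treatment of the remaining four reductions is sound: all premises of $\Gamma^{1\Dmsf}$ are $\{\langle1,0\rangle,\langle0,1\rangle\}$-valued so the two failure disjuncts collapse (this is the only case the paper proves), and for $\Gamma^{2\Dmsf}_\uparrow$ the constraint $v_1(p)>v_2(p)$ makes $v(p)^\uparrow$ automatically non-prime, so the surviving edge witnesses land on one-dimensional non-prime filters that the hierarchy of Fig.~\ref{fig:Gsquareorderhierarchy} absorbs, exactly as you argue.
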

\begin{proof}
We only consider the case of $\models^{\langle1,y\rangle^\uparrow}_{\Gsquareorder}$ as the remaining ones can be dealt with in the same fashion. Let a~$\Gsquareorder$-valuation $v$ witness $\Gamma\not\models^{\langle1,y\rangle^\uparrow}_{\Gsquareorder}\chi$. Then $v_1[\Gamma]=1$, $v_2[\Gamma]\leq y$ but $v_1(\chi)<1$ or $v_2(\chi)>y$. As $p$ is fresh, we set $v(p)=\langle1,y\rangle$ and observe that $v(\triangle p)=\langle1,1\rangle$ and $v(\neg p\rightarrow p)=\langle1,0\rangle$. Moreover, we see that $v(\psi)=\langle1,0\rangle$ for each $\psi\in\Gamma^{1\Dmsf}$. On the other hand, $v(p\rightarrow\chi)\neq\langle1,0\rangle$: if $v_1(\chi)=x'<1$, then $v_1(p\rightarrow\chi)=x'$; if $v_2(\chi)=y'>x$, then $v_2(p\rightarrow\chi)=y'$. Hence, $\triangletop(p\rightarrow\chi)=\langle1,0\rangle$ and $v=\langle v_1,v_2\rangle$ witnesses $\Gamma^{1\Dmsf}\not\models^\leq_{\Gsquareorder}\triangletop(p\rightarrow\chi)$, as required.

Conversely, let $v$ witness $\Gamma^{1\Dmsf}\not\models^\leq_{\Gsquareorder}\triangletop(p\rightarrow\chi)$. Note that $v(\psi)=\langle1,0\rangle$ for every $\psi\in\Gamma^{1\Dmsf}$ because every formula except for $\triangle p\leftrightarrow\neg\triangle p$ has form $\triangletop\tau$ (recall semantics of $\triangletop$ from~\eqref{equ:triangletop}). Additionally, observe that $v(\triangle p)\in\{\langle1,0\rangle,\langle1,1\rangle,\langle0,0\rangle,\langle0,1\rangle\}$ (and thus, $v(\triangle p\leftrightarrow\neg\triangle p)\in\{\langle1,0\rangle,\langle0,1\rangle\}$). As $v$ witnesses $\Gamma^{1\Dmsf}\not\models^\leq_{\Gsquareorder}\triangletop(p\rightarrow\chi)$ and $\triangle p\leftrightarrow\neg\triangle p\in\Gamma^{1\Dmsf}$, $v(\triangle p\leftrightarrow\neg\triangle p)=\langle1,0\rangle$. This means that $v(p)>_{[0,1]^{\Join}}v(\neg p)$ and $v(\triangle p)=\langle1,1\rangle$ (whence, $v(p)=\langle1,y'\rangle$ for some $1>y'>0$). Moreover, $v(p)\leq_{[0,1]^{\Join}}v(\phi)$ for every $\phi\in\Gamma$.  On the other hand, $v(\chi)<_{[0,1]^{\Join}}v(p)$ because $v(\triangletop(p\rightarrow\chi))\neq\langle1,0\rangle$. Thus, we have that $v[\Gamma]\in\langle1,y'\rangle^\uparrow$ but $v(\chi)\notin\langle1,y'\rangle^\uparrow$. Hence, $v$~witnesses $\Gamma\not\models^{\langle1,y'\rangle}_{\Gsquareorder}\chi$ By Lemma~\ref{lemma:otherpointgenerated01join}, this means that $\Gamma\not\models^{\langle1,y'\rangle}_{\Gsquareorder}\chi$, as required. 
\end{proof}
\section{Conclusion\label{sec:conclusion}}
In this paper, we studied entailment relations in $\invG$ and $\Gsquareorder$ induced by filters on $[0,1]$ and $[0,1]^{\Join}$. In particular, we obtained that there are only six such entailment relations for $\invG$ (Theorem~\ref{theorem:6entailmentsinvG}) none of which coincides with $\models^\leq_{\invG}$. Moreover, as shown in Theorem~\ref{theorem:allmatrixbasedarefinitary}, all matrix-based filter-induced entailment relations are finitary and reducible to $\models^\leq_{\invG}$ and~$\models^1_{\invG}$. This solves an open problem from~\cite[\S6.3]{ConiglioEstevaGispertGodo2021}. We also proved that there are eleven filter-induced entailment relations in $\Gsquareorder$ (Theorem~\ref{cor:11entailmentsGsquareorder}). In addition to that, we showed that any prime two-dimensional filter on $[0,1]^{\Join}$ contained in a~point-generated filter induces $\models^\leq_{\Gsquareorder}$. For $\invG$ and $\Gsquareorder$, we have established the hierarchies of filter-induced entailment relations (Theorems~\ref{theorem:invGhierarchy} and~\ref{cor:11entailmentsGsquareorder} and cf.~Figures~\ref{fig:invGhierarchy} and~\ref{fig:Gsquareorderhierarchy}) and obtained reductions of entailment relations induced by point-generated filters to the order-entailment (Theorems~\ref{theorem:entailmentreductionsinvG} and~\ref{theorem:entailmentreductionsGsquareorder}).

Recall that in the~\nameref{sec:introduction}, we said that in the context of reasoning about uncertainty, filters can be construed as thresholds above which an agent deems their degree of certainty in a~statement acceptable. In~\cite{BilkovaFrittellaKozhemiachenkoMajer2023IJAR}, a~family of logics for paraconsistent reasoning about uncertainty that use $\Gsquareorder$ was constructed and provided with strongly complete axiomatisations. Moreover, it was shown that the proofs in these logics are reducible to the proofs in the Hilbert calculus for $\Gsquareorder$ with the order-entailment. Thus, Theorem~\ref{theorem:entailmentreductionsGsquareorder} provides us with the means to formalise paraconsistent reasoning in such a~framework.

Still, several questions remain open. First, note that several entailment relations are induced by \emph{non-principal} filters (e.g., $\models^{(\sfrac{1}{2},1]}_{\invG}$ and $\models^{\langle1,1\rangle^{\sharp\uparrow}}_{\Gsquareorder}$). It is easy to see that \emph{finitary} and \emph{matrix-based} entailment w.r.t.\ such filters can be reduced to the order-entailment. On the other hand, since the filters are non-principal, the reduction of entailment from \emph{arbitrary} sets of formulas to the order-entailment seems to be equivalent to the compactness of entailment relations induced by non-principal filters. A~closely connected issue is that of axiomatising different filter-induced entailments in $\invG$ and $\Gsquareorder$. In~\cite{ConiglioEstevaGispertGodo2021}, one can find axiomatisations of~$\models^1_{\invG}$ and $\models^\leq_{\invG}$. To the best of our knowledge, there are no calculi for the remaining entailments in~$\invG$. Similarly, in~\cite{BilkovaFrittellaKozhemiachenkoMajer2023IJAR}, the order-entailment of $\Gsquareorder$ is axiomatised. It is reasonable to conjecture that allowing for arbitrary applications of the following two rules --- ${\phi}/{\triangle\phi}$ and ${\phi}/{\triangletop\phi}$ --- will result in the calculi for $\models^{\langle1,1\rangle^\uparrow}_{\Gsquareorder}$ and $\models^{\langle1,0\rangle^\uparrow}_{\Gsquareorder}$, respectively. It is an open problem, however, to establish axiomatisations for other filter-generated entailment relations in~$\Gsquareorder$.

Second, in~\cite{Wansing2008}, several paraconsistent expansions of the bi-Intuitionistic logic (jointly titled $\Imsf_i\Cmsf_j$'s) were considered. Each can be extended to a~paraconsitent expansion of the bi-G\"{o}del logic with semantics on $[0,1]^{\Join}$. It is, thus, instructive to explore entailment relations in these logics induced by filters on $[0,1]^{\Join}$.

Third, in~\cite{BilkovaFrittellaKozhemiachenkoMajer2023IJAR}, it is shown that the order-entailment in $\Gsquareorder$ coincides with the \emph{local} entailment over linear Intuitionistic Kripke frames with two valuations. It seems reasonable to conjecture that the \emph{global} entailment ($\Gamma$ globally entails $\chi$ if, in every model where $\Gamma$ is true in every state, $\chi$ is also true in every state) over these frames coincides with $\models^{\langle1,1\rangle^\uparrow}_{\Gsquareorder}$. Likewise, $\models^{\langle1,0\rangle^\uparrow}_{\Gsquareorder}$ seems to coincide with the global entailment w.r.t.\ truth and non-falsity (i.e., $\Gamma$ globally entails $\chi$ w.r.t.\ truth and non-falsity if in every model where $\Gamma$ is true and not false in every state, $\chi$ is also globally true and not false in every state). It is an open problem to establish whether other filter-induced entailment relations in $\Gsquareorder$ can be represented on frames.
\bibliographystyle{plain}
\bibliography{references.bib}
\end{document}